\documentclass{amsart}
\usepackage[margin=1.25in, marginparwidth=75pt]{geometry}
\usepackage[T1]{fontenc}
\usepackage{graphicx} 
\usepackage[utf8]{inputenc}
\usepackage{amsmath,amsthm,amsfonts,amscd}
\usepackage{tikz-cd}
\usepackage{dsfont}
\usepackage{mathrsfs}
\usepackage{upgreek}
\usepackage[square,numbers]{natbib}
\usepackage{pdflscape}
\usepackage{rotating}
\usepackage{eucal} 	 	
\usepackage{verbatim}      	
\usepackage{markdown}
\usepackage{booktabs}
\usepackage{url}
\usepackage{multirow}
\usepackage{makecell}
\usepackage{tabularx}
\usepackage{mathtools}
\usepackage{hyperref}
\hypersetup{colorlinks=false}
\usepackage{hhline}
\usepackage{tcolorbox}
\usepackage{nicematrix}
\usepackage{subcaption}
\usepackage{appendix}
\usepackage{svg}
\usepackage{enumitem,amssymb}
\newlist{todolist}{itemize}{2}
\setlist[todolist]{label=$\square$}

\newtheorem{theorem}{Theorem}[section]

\newtheorem{prop}{Proposition}[theorem]
\newtheorem*{prop*}{Proposition}
\newtheorem{lem}[prop]{Lemma}

\newtheorem*{conjecture*}{Conjecture}
\newtheorem*{guiding}{Guiding Principle}
\newtheorem{corollary}[prop]{Corollary}

\newtheorem*{question*}{Question}

\newtheorem*{theorem*}{Theorem}

\theoremstyle{remark}
\newtheorem*{remark*}{Remark}

\renewcommand{\H}[1]{\mathbb{H}^{#1}}
\newcommand{\slr}[1]{\mathfrak{sl}_{#1}(\mathbb{R})}
\newcommand{\son}[1]{\mathfrak{so}(#1,1)}
\newcommand{\pantsc}{\mathcal{P}}
\newcommand{\borro}{M}
\newcommand{\gad}{\mathfrak{g}_{Ad_\rho}}
\newcommand{\sslash}{\mathbin{/\mkern-6mu/}}

\title{Branched Bending in Finite-Volume Hyperbolic Manifolds}
\author{Casandra D. Monroe}
\address{Department of Mathematics, University of Michigan, Ann Arbor, MI}
\email{cdmonroe@umich.edu}

\begin{document}

\begin{abstract}
We define \emph{branched bending deformations} as deformations supported on a piecewise totally geodesic complex of $(n-1)$-dimensional faces meeting along $(n-2)$-dimensional branching loci. These are a generalization of bending deformations, as introduced by \citet{Johnson1987-ik}. We give a lower bound on the dimension of the (infinitesimal) deformation space supported on a branched bending complex, and in doing so generalize a result of \citet{Bart2006-im}. We give equations describing these deformations in the setting of deforming to higher hyperbolic geometry and real projective geometry. 
As a special example of branched bending, we construct infinitesimal deformations supported on the link complement of the Borromean Rings (also known as the link $6^3_2$), recovering a special case of a theorem due to \citet{menasco1992totally}. 
\end{abstract}

\maketitle
\vspace{-.5cm}
\section{Introduction}
It is a central question in the study of geometric topology to determine when the complete structure on a finite volume hyperbolic $n$-manifold deforms into a geometric structure of another type. 
Bending deformations, as introduced by \citet{Johnson1987-ik}, are explicit examples of such deformations. The construction makes use of an embedded totally geodesic hypersurface $\Sigma$ and, in return, yields a family of deformations into a larger semisimple Lie group $G$.

While these examples explain some of the flexibility of certain hyperbolic manifolds, there are some that deform even in the absence of an embedded totally geodesic hypersurface. Cooper, Long, and Thistlethwaite (\cite{cooper2006computing}, \cite{Cooper2007-cs}) analyzed over 4,500 closed hyperbolic $3$-manifolds and showed that 61 admit (infinitesimal) deformations into $SL_{4}(\mathbb{R})$, despite not having any embedded (or even immersed!) totally geodesic surfaces. There are similar examples of such manifolds supporting deformations into $SO^+(4,1)$. 

In light of these examples, one can then ask: what topological features of a hyperbolic manifold contribute to its flexibility? \citet{Bart2006-im} study a generalization of bending: they find an example of a cusped hyperbolic $3$-manifold that contains no embedded totally geodesic surfaces, and describe a piecewise totally geodesic 2-complex that supports an infinitesimal deformation into $SO^+(4,1)$. Furthermore, they establish a lower bound on the dimension of the space of \emph{infinitesimal} deformations in terms of the number of 2-dimensional pieces $c_2$ and the number of 1-dimensional pieces $c_1$ of a totally geodesic 2-complex. Inspired by their examples, we will define \emph{branched bending deformations} as deformations supported on a piecewise totally geodesic complex of $(n-1)$-dimensional faces meeting along $(n-2)$-dimensional branching loci. 

In Theorem \ref{thm:bbson1}, we give an explicit geometric interpretation of the bound given by Bart and Scannell, as well as a criterion for when such deformations are possible from $\son{n}$ into $\son{n+1}$, when $n\geq3$. 
\begin{theorem} \label{thm:bbson1}
Suppose $M = \Gamma \backslash \mathbb{H}^n$ is a complete hyperbolic $n$-manifold containing a branched totally geodesic hypersurface $\Delta$ with branching locus $\mathscr{B}$. Let $c_{(n-1)}$ be the number of $(n-1)$-dimensional complementary regions in $\Delta\backslash \mathscr{B}$, and let $c_{(n-2)}$ be the number of components of $\mathscr{B}$. Then, with mild conditions on $\Tilde{M}\backslash \Tilde{\Delta}$, we obtain:
\[dim_{\mathbb{R}} H^{1}(\Gamma, \mathbb{R}^{n,1}) \geq c_{(n-1)} - 2c_{(n-2)}\] 
\end{theorem}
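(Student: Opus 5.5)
Here $H^1(\Gamma,\mathbb{R}^{n,1})$ is the cohomology with coefficients in $\mathbb{R}^{n,1}\cong\son{n+1}/\son{n}$, the complement of $\son{n}$ in $\son{n+1}$. For each lift $\tilde F$ of a face of $\Delta$, let $v_{F}\in\mathbb{R}^{n,1}$ be a unit spacelike vector orthogonal to the hyperplane containing $\tilde F$. Its image in $\son{n+1}$ is the infinitesimal rotation fixing that hyperplane, and it is centralized by the stabilizer of the hyperplane, which is exactly what ordinary bending uses. The plan is to realize cocycles as piecewise-constant $\mathbb{R}^{n,1}$-valued "jump" data on the dual graph of $\tilde M\setminus\tilde\Delta$, following the construction of \citet{Johnson1987-ik} and \citet{Bart2006-im}.

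\textbf{Step 1: the candidate cocycles.} Assign a real weight $t_F$ to each of the $c_{(n-1)}$ complementary regions $F$ of $\Delta\setminus\mathscr{B}$, and extend it $\Gamma$-equivariantly to all lifts. Fix a basepoint $x_0\in\tilde M\setminus\tilde\Delta$. For $\gamma\in\Gamma$, choose a path from $x_0$ to $\gamma x_0$ transverse to $\tilde\Delta$ and avoiding $\tilde{\mathscr{B}}$, and set
\[ z(\gamma)=\sum_{\text{crossings } \tilde F} \pm\, t_F\, v_{\tilde F}. \]
This is the usual bending cocycle. The mild conditions on $\tilde M\setminus\tilde\Delta$ will be chosen so that the complementary regions are simply connected and the dual graph is well behaved. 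Under those conditions, $z(\gamma)$ is well defined if and only if the sum of jumps around a small loop linking each lift of a branching component vanishes. The cocycle identity $z(\gamma\gamma')=z(\gamma)+\gamma z(\gamma')$ then follows from equivariance of $v$ and concatenation of paths.

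\textbf{Step 2: the closing condition at the branching locus.} Around a component $B$ of $\tilde{\mathscr{B}}$, every face meeting $B$ has its normal vector in the orthogonal complement $W_B$ of the $(n-2)$-plane spanned by $B$. Inside $\mathbb{R}^{n,1}$, the space $W_B$ is a $2$-dimensional spacelike subspace. The vanishing condition $\sum_F \pm t_F v_{F}=0$ is therefore at most $2$ linear equations in the weights per component of $\mathscr{B}$. By $\Gamma$-equivariance (the stabilizer of $B$ acts compatibly on $W_B$), it suffices to impose these equations once per component of $\mathscr{B}$ in $M$. This gives a solution space $V$ of weights with $\dim V\geq c_{(n-1)}-2c_{(n-2)}$.

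\textbf{Step 3: injectivity modulo coboundaries.} This is the main obstacle. One must show that the map $V\to H^1$ is injective, or at least that its kernel is controlled. A coboundary has the form $z(\gamma)=\gamma w-w$. I would restrict to the fundamental group of a complementary region of $\tilde M\setminus\tilde\Delta$, or to stabilizers of faces: the cocycle vanishes there, which forces $w$ to be fixed by those subgroups. The subgroups should be Zariski dense enough (another of the mild conditions, e.g.\ non-elementary region groups) that this forces $w=0$. Hence $z=0$, so all crossings cancel, so all $t_F=0$.

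Given the limited hypotheses stated, I expect that establishing injectivity is exactly where the unspecified mild conditions get used. I would formulate them as: each region stabilizer is nonelementary and acts on $\mathbb{R}^{n,1}$ with no fixed vectors, and lifts of faces bounding a region are distinct hyperplanes. With these conditions, a nonzero weight gives a nonzero jump that cannot be absorbed by a coboundary.
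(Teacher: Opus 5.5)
Your Steps 1 and 2 are the paper's construction in different notation. Your weights $t_F$ and jumps $\pm t_F v_{\tilde F}$ are the momenta $\omega_w$ and the map $\psi$, and $z(\gamma)$ is the cocycle $\xi$ built from the potential $\varphi$ with $\zeta_0=0$. Your observation that all normals at a binding lie in the $2$-dimensional spacelike plane $W_B$ is a cleaner, coordinate-free derivation of \eqref{eq:eq1}--\eqref{eq:eq2}: writing $v_{w_i}=\cos\theta_i\,e_1+\sin\theta_i\,e_2$ in $W_B$ recovers them exactly. You do not need simply connected regions for well-definedness. Path independence only uses that $\tilde M$ is simply connected and $\tilde{\mathscr B}$ has codimension $2$, so loops in $\tilde M\setminus\tilde{\mathscr B}$ are products of conjugates of meridians around bindings.

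The gap is at the end of Step 3, which is exactly the content of Lemma~\ref{lem:faith}. Restricting to the stabilizer of the base region $\delta_0$ does give $w=0$, hence $z\equiv 0$. But ``so all crossings cancel'' does not follow. The condition $z\equiv 0$ only says that the total jump from $\delta_0$ to each translate $\gamma\delta_0$ vanishes. When $M\setminus\Delta$ has more than one component, cells in the other $\Gamma$-orbits are never reached this way.

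Note also that the cocycle does \emph{not} vanish on the stabilizer $\Gamma_\delta$ of a non-base region. Write $\varphi(\delta)$ for the accumulated jump from $\delta_0$ to $\delta$. Equivariance of the jumps gives $\varphi(\gamma\delta)=z(\gamma)+\gamma\varphi(\delta)$ for every cell $\delta$, so $z(\gamma)=(1-\gamma)\varphi(\delta)$ for $\gamma\in\Gamma_\delta$.

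The missing step is to use this identity. If $z(\gamma)=\gamma w-w$, then $\varphi(\delta)+w$ is fixed by $\Gamma_\delta$ for \emph{every} cell $\delta$. Your no-fixed-vector hypothesis, applied to every region rather than just the base one, then forces $\varphi\equiv -w$ to be constant. Hence every jump $t_Fv_F=\varphi(\delta')-\varphi(\delta)$ vanishes, and $t_F=0$. This is how the paper argues: after normalizing $\xi=0$, $\varphi$ is $\Gamma$-equivariant, so each $\varphi(\delta)$ is $\mathrm{Fix}_\Gamma(\delta)$-invariant and hence zero.

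Face stabilizers alone cannot do this job, since $\Gamma_F$ fixes $v_F$. You need two faces of each region lying in distinct hyperplanes, as in the remark after Lemma~\ref{lem:faith}.
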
 
Furthermore, we produce a system of equations that are a function of the local geometric data and whose output is an infinitesimal deformation of some representation $\rho$ from $\son{n}$ into $\son{n+1}$. The conditions on $\Tilde{M}\backslash \Tilde{\Delta}$ ensure that the components in the complement of the hypersurface $\Delta \subset M$ have ``enough topology'' in some sense. These conditions can be thought of as analogous to those described in Lemma 5.4 of \cite{Johnson1987-ik}. The precise statement of Theorem \ref{thm:bbson1} can be found in Theorem \ref{thm:bbson1fin}. 

We also consider higher rank deformations. In Theorem \ref{thm:bbpgln}, we develop similar bounds and techniques for infinitesimal branched deformations from $\son{n}$ into $\slr{n+1}$. 
\begin{theorem} \label{thm:bbpgln}
Let $M$, $\Delta$, and $\mathscr{B}$, be as in Theorem \ref{thm:bbson1}. 
Then 
$$dim_{\mathbb{R}} H^{1}(\Gamma, \nu_{n+1}) \geq c_{(n-1)} - 3c_{(n-2)}$$
\end{theorem}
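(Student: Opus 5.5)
We will imitate the argument behind Theorem \ref{thm:bbson1}, replacing the $\son{n+1}$ target with $\slr{n+1}$. Here $\nu_{n+1}$ is the $SO(n,1)$-invariant complement of $\son{n}$ in $\slr{n+1}$, i.e.\ the symmetric traceless matrices with respect to the form $J=\mathrm{diag}(1,\dots,1,-1)$. The idea is to build cocycles $z\colon\Gamma\to\nu_{n+1}$ that are locally constant off $\tilde\Delta$ and jump only when a path crosses a lift of a face. The dimension count will come from comparing the available parameters (one per face) with the constraints (a fixed number per branching component).

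\textbf{Local building block for a single face.} For a totally geodesic face $F\subset\Delta\setminus\mathscr{B}$ with lift $\tilde F$ lying in the hyperplane $v^\perp$, where $v$ is a spacelike unit vector, the centralizer of $\mathrm{Stab}(\tilde F)\cong SO(n-1,1)$ in $\nu_{n+1}$ is one-dimensional. It is spanned by $b_v = vv^TJ-\tfrac{1}{n+1}\langle v,v\rangle I$, which is the projective bending of Johnson--Millson type. Assign a real weight $t_F$ to each face, and hence a weight to each $\Gamma$-translate.

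Given a loop $\gamma$ based in a complementary region, we define the cocycle as an ordered sum. Concretely, $z(\gamma)=\sum_i t_{F_i}\, \mathrm{Ad}(g_i) b_{v_i}$, summed over the successive crossings of $\tilde\Delta$ along a lift of $\gamma$, where $g_i$ is the accumulated holonomy. The mild conditions on $\tilde M\setminus\tilde\Delta$ (as in Theorem \ref{thm:bbson1fin}) will ensure that this is well defined on $\Gamma$ and is not a coboundary once the weights are nontrivial.

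\textbf{Constraints at the branching locus.} Well-definedness reduces to the requirement that a small loop around each component $B$ of $\mathscr{B}$ contributes zero. Let $\tilde B$ be a lift, a codimension-two geodesic subspace, with orthogonal complement a $2$-plane $P=\tilde B^\perp$. The faces incident to $\tilde B$ have normals $v_1,\dots,v_k$ in $P$. The cocycle condition becomes
\[\sum_{j=1}^k \epsilon_j t_{F_j}\,\bigl(v_jv_j^T J\bigr)\big|_{\text{traceless}}=0,\]
where $\epsilon_j=\pm1$ accounts for orientation. The rotation about $\tilde B$ that fixes $\tilde B$ is an elliptic element acting only on $P$. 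Each $v_jv_j^TJ$ is symmetric (with respect to $J$) and supported on $P$, so the left-hand side lies in the space of $J$-symmetric endomorphisms of the $2$-plane $P$. After accounting for the trace condition, this space is $3$-dimensional: the $2\times2$ symmetric matrices. Hence each branching component imposes at most $3$ linear conditions on the weights.

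By contrast, in the $\son{n+1}$ case the relevant space is $P\otimes e_{n+2}$, which is $2$-dimensional; this recovers the coefficient $2$ in Theorem \ref{thm:bbson1}. Counting, we have $c_{(n-1)}$ unknowns and at most $3c_{(n-2)}$ equations, so the solution space has dimension at least $c_{(n-1)}-3c_{(n-2)}$.

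\textbf{Injectivity into cohomology, the main obstacle.} It remains to show that distinct solutions give distinct classes in $H^1(\Gamma,\nu_{n+1})$. Suppose a solution cocycle is a coboundary $\gamma\mapsto \mathrm{Ad}(\gamma)X-X$. Evaluating on elements of $\pi_1$ of a complementary region, where $z$ vanishes, shows that $X$ is centralized by those subgroups. The mild conditions should force these subgroups to be Zariski dense enough (for example, non-elementary and not preserving a proper subspace) that $X=0$. This in turn forces all weights to vanish.

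I expect this step to be the delicate part: we must check that the centralizer in $\nu_{n+1}$ of a region's fundamental group is trivial. We must also check that the jump-across-a-face computation really isolates each $t_F$. For that we will use holonomy of loops crossing $F$ exactly once, comparing the values of $z$ on the two adjacent regions.
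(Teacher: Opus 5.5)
Your proposal is correct and follows essentially the same route as the paper. The paper also assigns one bulging weight per wall, builds the cocycle by integrating the jumps across lifted walls (its $\psi \mapsto \varphi \mapsto \xi$ bookkeeping), and gets three linear conditions per binding because the sum of the generators $v v^{T}J$ lies in the $3$-dimensional space of symmetric endomorphisms of the normal $2$-plane $\tilde B^{\perp}$ (its symmetric $2\times 2$ block obtained by rotating $\mathrm{diag}(-n,1)$), with injectivity coming from Zariski density of the cell stabilizers.
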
 
Similarly, we produce a system of equations that are a function of the local geometric data and whose output is an infinitesimal deformation of some representation $\rho$ from $\son{n}$ into $\slr{n+1}$. The precise statement can be found in Theorem \ref{thm:bbpglnfin}.

In Theorem \ref{thm:bbson1} and Theorem \ref{thm:bbpgln}, the choice of coefficients is motivated by a splitting of the Lie algebras of $\son{n+1}$ and $\slr{n+1}$ respectively. For further discussion of this splitting, we refer the reader to Section \ref{subsec:splitting}.  

%%%%%%%%%%%%%%%%%%%%%%%%%%%%%%%%%%%%%%%%%%%%%%%%%%%%%%%%
\subsection*{Overview}
The question of how much of the deformation space of a hyperbolic manifold is described by bending goes back to its introduction. The bending construction defined by Johnson and Millson \cite{Johnson1987-ik} makes use of an embedded totally geodesic hypersurface $\Sigma$ and, in return, yields a family of deformations into a larger semisimple Lie group $G$ parametrized by $Z_{G}(SO^+(n-1,1))$. When $G=SO^+(n+1,1)$ or $G=SL_{n+1}(\mathbb{R})$, bending yields a 1-parameter family of deformations of the original complete hyperbolic structure of the manifold. In these settings, we have the following theorem: 

\begin{theorem} [\cite{Johnson1987-ik}] \label{thm:jam}
    Suppose a manifold $M = \Gamma \backslash \mathbb{H}^n$ contains $r$ disjoint embedded two-sided connected totally geodesic hypersurfaces $M_{1}, M_{2}, \dots, M_{r}$. Then we have that the dimension of deformations into $G$ is at least $r$. 
\end{theorem}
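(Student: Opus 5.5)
Since $M_1,\dots,M_r$ are disjoint, embedded, two-sided and totally geodesic, I would work one hypersurface at a time, decompose $\Gamma$ along it, and then assemble the resulting cocycles. For each $i$, lift $M_i$ to a totally geodesic copy $\mathbb{H}^{n-1}\subset\mathbb{H}^n$ with stabilizer $\Gamma_i=\pi_1(M_i)\subset SO^+(n-1,1)$. If $M\setminus M_i$ is connected, $\Gamma$ is an HNN extension $A*_{\Gamma_i}$ with stable letter $t$. If it is disconnected, $\Gamma=A*_{\Gamma_i}B$ is an amalgamated product. The centralizer $Z_G(SO^+(n-1,1))$ contains a one-parameter subgroup $\exp(sX_i)$, and $X_i$ is not centralized by $\Gamma$; for $G=SO^+(n+1,1)$ this is the rotation in the plane normal to the $\mathbb{H}^{n-1}$ inside $\mathbb{H}^{n+1}$. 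Bending defines $\rho_s$ as follows: in the amalgam case, $\rho_s|_A=\rho$ and $\rho_s|_B=\exp(sX_i)\rho\exp(-sX_i)$; in the HNN case, $\rho_s|_A=\rho$ and $\rho_s(t)=\exp(sX_i)\rho(t)$. These are well defined because $\exp(sX_i)$ commutes with $\rho(\Gamma_i)$.

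The infinitesimal version gives a cocycle $z_i\in Z^1(\Gamma,\mathfrak g)$, which I would describe concretely and equivariantly. Orient each lift $g\tilde M_i$. For $\gamma\in\Gamma$, define $z_i(\gamma)$ as the sum of $\pm\mathrm{Ad}_{\rho(g)}X_i$ over the lifts $g\tilde M_i$ crossed by a path from a basepoint $x_0$ to $\gamma x_0$, with signs given by the crossing orientation. Two facts make this work. First, $\mathrm{Ad}_{\rho(g)}X_i$ depends only on the coset $g\Gamma_i$, since $X_i$ is $\Gamma_i$-invariant. Second, the sum is independent of the path because $\mathbb{H}^n$ is simply connected and the lifts are disjoint two-sided walls. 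Together these give the cocycle identity. Since the $M_i$ are disjoint, the walls of all $r$ families are simultaneously disjoint, so the $z_i$ are defined in one common framework.

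The main step, and the expected obstacle, is proving that $[z_1],\dots,[z_r]$ are linearly independent in $H^1(\Gamma,\mathfrak g)$, or in the relevant summand $\mathbb{R}^{n,1}$ or $\nu_{n+1}$ under the splitting. Suppose $\sum a_i z_i=\delta v$ for some $v\in\mathfrak g$. My plan is to restrict this identity to suitable subgroups. Take a closed loop meeting $M_j$ exactly once, transversally, and missing the other hypersurfaces; such a loop exists when the complement of $M_j$ is connected, since two-sidedness lets one return to the start. Let $\gamma$ be the corresponding element. Also use elements of the complementary pieces, for which all $z_i$ vanish. Then $\delta v$ vanishes on the fundamental groups of the complementary regions. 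Since $\rho$ restricted to each complementary region is Zariski dense, or at least has trivial centralizer in the $SO(n,1)$-complement (Johnson--Millson's Lemma 5.4-type hypothesis), $v$ must be fixed by those groups, which forces $v=0$ in the non-$\mathfrak{so}(n,1)$ part. Then $a_j\,\mathrm{Ad}_{\rho(g)}X_j$ must vanish for the crossing element, which gives $a_j=0$. In the separating case, where no such loop exists, I would instead use the amalgam structure. Elements of $A$ and $B$ give $v$ fixed by $\rho(A)$ and $v$ fixed by $\mathrm{Ad}(\cdot)$-modified $\rho(B)$, up to $a_jX_j$. This yields $a_jX_j$ fixed by $\rho(B)$, contradicting non-elementarity of $\rho(B)$ unless $a_j=0$. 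This step is where the nondegeneracy hypotheses on the complementary regions enter, and I expect the careful bookkeeping there to be the real content.
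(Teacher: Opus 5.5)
The paper only cites this result from Johnson--Millson, but it is the binding-free case of Sections \ref{sec:book}--\ref{sec:low}. Your $z_i$ is exactly the $\psi/\varphi$ construction there: weights $a_i$ on the walls, and $\varphi(\tilde C)$ is the weighted wall-crossing sum from the base region $\tilde C_0\ni x_0$. For $r=1$ your argument is essentially complete. The gap is in the independence step for $r\ge 2$, which is the whole content of the bound.

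\emph{Non-separating branch.} Connectedness of $M\setminus M_j$ does not give a loop that crosses $M_j$ once and misses the other $M_i$. That needs the two sides of $M_j$ to lie in the same component of $M\setminus\bigcup_i M_i$. For example, let $\Sigma\subset M$ be non-separating and let $\hat M$ be the double cover given by mod-$2$ intersection number with $\Sigma$. Then $\Sigma$ lifts to disjoint $\Sigma_1,\Sigma_2$, each non-separating, but every loop crossing $\Sigma_1$ once must also cross $\Sigma_2$. In general such a loop may cross many lifted walls. A linear relation among the vectors $\mathrm{Ad}_{\rho(g)}X_i$ for several distinct hyperplanes need not isolate $a_j$.

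\emph{Separating branch.} Restricting $\sum a_iz_i=\delta v$ to $A$ does not make $v$ fixed by $\rho(A)$ when other $M_i$ lie on the $A$-side, because those $z_i$ do not vanish on $A$. More basically, the $z_i$ vanish only on $\mathrm{Stab}_\Gamma(\tilde C_0)$. For any region $\tilde C$ and $\gamma\in\mathrm{Stab}_\Gamma(\tilde C)$ one has $\sum a_iz_i(\gamma)=(I-\mathrm{Ad}_{\rho(\gamma)})\varphi(\tilde C)$. Tracking $\varphi(\tilde C)$ is the missing idea. Also, non-elementarity cannot be what rules out a fixed vector: $\Gamma_j$ is non-elementary and fixes $X_j$. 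What you need is that the group has no nonzero fixed vector in $V$.

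\emph{Repair.} The repair is Lemma \ref{lem:faith}, which handles all walls at once. If $\sum a_iz_i=\delta v$, shift $\varphi$ by $\pm v$ (i.e.\ change $\zeta_0$). Then $\varphi(\gamma\tilde C)=\mathrm{Ad}_{\rho(\gamma)}\varphi(\tilde C)$ for every region $\tilde C$ of $\mathbb{H}^n$ minus the lifts and every $\gamma\in\Gamma$. So each $\varphi(\tilde C)$ is fixed by $\mathrm{Stab}_\Gamma(\tilde C)$, hence is zero. Since $\varphi$ jumps by $\pm a_j\,\mathrm{Ad}_{\rho(g)}X_j\neq 0$ across each lift $g\tilde M_j$, every $a_j=0$.

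The Zariski density used here is automatic, not an extra Lemma 5.4-type hypothesis. By finite volume, each region is bounded by at least two distinct lifted hyperplanes, since a region with a single boundary hyperplane is a half-space, whose quotient has infinite volume. The stabilizers of these hyperplanes are lattices in distinct copies of $SO^+(n-1,1)$, and they preserve $\tilde C$ by two-sidedness. They therefore generate a Zariski-dense subgroup, as in the remark after Lemma \ref{lem:faith}.

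This gives $\dim H^1\ge r$. For actual rather than infinitesimal deformations, add that bendings along disjoint hypersurfaces can be performed simultaneously. This gives an $r$-parameter family whose derivative at $0$ injects into $H^1$.
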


This theorem motivated the following conjecture: 

\begin{conjecture*}[Kourouniotis, \cite{Mathematisches_Forschungsinstitut_Oberwolfach1985-bo}] \label{thm:allbend}
    %The space of (spatial) deformations of a manifold $\mathbb{H}^n/\Gamma$ is exactly the bending deformations.
    The only nontrivial deformations of the complete hyperbolic structure of a compact hyperbolic $n$-manifold for $n>2$ are those that arise from bending.  
\end{conjecture*}

We now understand this conjecture to be false; there are several hyperbolic manifolds for which we understand the deformation space cannot be entirely described by bending (if at all). For example, several of the flexible manifolds established by \citet{Cooper2007-cs} possess even integrable deformations of their complete hyperbolic structure in the absence of an embedded, totally geodesic submanifold. These examples inspire a sort of realization problem: what topological properties of the manifold give rise to these deformations of the geometry? 

We look to Theorem \ref{thm:jam} as inspiration. Which hypotheses, if any, can be weakened such that we get a more robust description of what deformations are possible? In this paper, we investigate removing the condition that the totally geodesic hypersurfaces need be disjoint. Instead, we allow them to intersect, and more generally, to form branched hypersurfaces. 

The direct generalization of bending along \textit{intersecting} surfaces appears more complicated---the usual splitting of the fundamental group called for in the construction is not as simple to arrange. In exchange for a more general bending locus, we expect more restrictions on the potential deformations. With this in mind, we look to defining \emph{infinitesimal} deformations supported along our branched bending loci. This approach was explored in the original paper of \citet{Johnson1987-ik}, where they showed that the composition of infinitesimal bending deformations supported along intersection hypersurfaces is (typically) not integrable. (For a special example of when such a deformation is integrable, see Corollary \ref{cor:proj}).

Inspired by the aforementioned conjecture of Kourouniotis, Apanasov introduced a different notion of deformation in a hyperbolic $n$-manifold called a ``stamping'' deformation. 
A stamping deformation is a deformation supported on an arrangement of intersecting totally geodesic hypersurfaces, and was initially described as distinct from bending \cite{Apanasov1990-lc}. However, Bart and Scannell showed that a stamping deformation is actually the composition of (three) bending deformations \cite{Bart2007-kc}. In related work to this notion of stamping and bending supported along arrangements of intersecting hypersurfaces, and in analogous form to Theorem \ref{thm:jam}, Bart and Scannell prove: 

\begin{theorem}[Theorem 3.1 in \cite{Bart2006-im}] \label{thm:bas}
Suppose $M=\Gamma \backslash \mathbb{H}^3$ is a complete hyperbolic $3$-manifold containing a branched totally geodesic hypersurface $S$ with branch locus $B$. Let $c_2$ be the number of two-dimensional complementary
regions in $S\backslash B$ with non-elementary Fuchsian fundamental group, and let $c_1$ be the number of components of $B$. Then the space of infinitesimal bending deformations supported on $S$ (a fortiori $H^1(\Gamma, \mathbb{R}^{3,1})$)
 has dimension at least $c_2 - 2c_1$.
\end{theorem}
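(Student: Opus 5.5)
The plan is to build infinitesimal bending cocycles directly, one real parameter per complementary region of $S\backslash B$, and to show that compatibility along the branch locus costs at most two linear conditions per component of $B$. In $\mathfrak{so}(3,1)\subset\mathfrak{so}(4,1)$ we have the splitting $\mathfrak{so}(4,1)=\mathfrak{so}(3,1)\oplus\mathbb{R}^{3,1}$. For each region $R_i$ of $S\backslash B$, lift it to a totally geodesic plane $P$ in $\mathbb{H}^3$. Let $v_P\in\mathbb{R}^{3,1}$ be the vector generating the centralizer of the stabilizer of $P$; equivalently, $v_P$ is the infinitesimal rotation about $P$ into the fourth direction, which is the normal vector to $P$. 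Assign a weight $t_i\in\mathbb{R}$ to the region. Then define a cocycle $z:\Gamma\to\mathbb{R}^{3,1}$ as follows. Choose a basepoint $x_0\in\tilde M\backslash\tilde S$ and, for $\gamma\in\Gamma$, a path from $x_0$ to $\gamma x_0$ transverse to $\tilde S$ and avoiding $\tilde B$. Set $z(\gamma)=\sum t_{i(P)}\,\epsilon_P\, v_P$, summing over the lifted planes crossed, with signs $\epsilon_P$ given by the crossing direction and with each term transported by the appropriate partial product.

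The cocycle identity $z(\gamma\delta)=z(\gamma)+\gamma\cdot z(\delta)$ is automatic once $z$ is well defined, by concatenating paths. This is the standard Johnson--Millson construction.

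Well-definedness is the heart of the argument. Two such paths differ by a loop in $\tilde M\backslash\tilde B$, since $\tilde M$ is simply connected. By general position, such a loop is a product of small meridian loops around lifts of components of $B$. So the sum must vanish around each branch geodesic $\ell$. Near $\ell$, finitely many half-planes $H_1,\dots,H_k$ meet along $\ell$, and the condition is $\sum_j \pm t_{i(H_j)} v_{H_j}=0$.

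Every normal vector $v_{H_j}$ is orthogonal to the spacelike 2-plane spanned by $\ell$, that is, to the tangent data of the geodesic $\ell\subset\mathbb{H}^3$. Each $v_{H_j}$ therefore lies in the 2-dimensional orthogonal complement $\ell^\perp\subset\mathbb{R}^{3,1}$. So the closing condition at $\ell$ is at most two real linear equations in the $t_i$. These equations are $\Gamma$-equivariant, because the stabilizer of $\ell$ fixes $\ell^\perp$ up to its action. Hence one pair of equations per component of $B$ suffices, giving at most $2c_1$ equations. The solution space therefore has dimension at least $c_2-2c_1$.

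The main obstacle is showing that this solution space injects into $H^1(\Gamma,\mathbb{R}^{3,1})$, i.e.\ that a nonzero weight vector never yields a coboundary $z(\gamma)=\gamma u-u$. Here the hypothesis that each region has non-elementary Fuchsian fundamental group enters. Suppose $z$ is a coboundary. Restrict to loops based on one side of a region $R_i$ that stay within a neighborhood of $R_i$; along these, $z$ vanishes, so $u$ is fixed by the non-elementary group $\pi_1(R_i)$. The fixed vectors of a non-elementary Fuchsian subgroup of $SO(3,1)$ lie in the line spanned by $v_P$.

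To conclude, compare the basepoint on the two sides of $P$: the values of $u$ differ by $t_i v_P$. I would first try to make this precise by working with the equivariant function on complementary components of $\tilde M\backslash\tilde S$ that $z$ defines. A coboundary would force this function to be constant in a way that contradicts $t_i\neq0$. This uses the fact that each complementary component has stabilizer large enough to pin down fixed vectors.

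If this direct argument stalls, the fallback is to relax the goal. Instead of injectivity, show that the kernel consists of weights coming from global $u$. That kernel has dimension bounded by the number of complementary components, which is absorbed under the mild hypotheses.
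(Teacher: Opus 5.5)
Your construction and count follow the paper's route. The path-sum cocycle is exactly the $\psi\mapsto\varphi\mapsto\xi$ bookkeeping of Section~\ref{sec:book}. Your observation that all normals $v_{H_j}$ at a branch geodesic $\ell$ lie in the $2$-dimensional space $\ell^\perp$ is a coordinate-free form of Equations~\eqref{eq:eq1}--\eqref{eq:eq2}, i.e.\ $A=2$. (Small slip: the plane spanned by $\ell$ is Lorentzian; it is $\ell^\perp$ that is spacelike.)

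The gap is injectivity. If $z$ is a coboundary, then after shifting the initial value the function $\varphi$ on cells of $\tilde M\setminus\tilde S$ is $\Gamma$-equivariant, so each $\varphi(\delta)$ is fixed by $\mathrm{Stab}_\Gamma(\delta)$. Your local argument shows only that the two cells $\delta,\delta'$ adjacent to a lift $P$ of $R_i$ have $\varphi(\delta),\varphi(\delta')\in\mathbb{R}v_P$. But two vectors of $\mathbb{R}v_P$ differing by $t_iv_P$ is no contradiction at all. To force $t_i=0$ you need $\varphi\equiv 0$, i.e.\ that no cell stabilizer fixes a nonzero vector of $\mathbb{R}^{3,1}$. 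That is a statement about the complementary components of $\tilde S$, not about the regions of $S\setminus B$. You assert it (``stabilizer large enough'') rather than prove it, and it is not automatic: the cells of the full Borromean complex $\mathcal{P}$ are simply connected. The paper sidesteps this by \emph{assuming} $\mathrm{Fix}_\Gamma(\delta)$ is Zariski dense for every cell, after which Lemma~\ref{lem:faith} gives $\varphi=0$. Your fallback yields only a weaker bound, not $c_2-2c_1$.

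Under the hypotheses as stated, the missing idea is a global finite-volume argument.
\begin{enumerate}
\item Put weights only on the $c_2$ non-elementary regions and weight $0$ on the others. Your injectivity argument uses non-elementarity of each weighted region, and the variable count must be $c_2$ anyway.
\item An equivariant $\varphi$ is then constant across zero-weight walls. So it is an equivariant function on the components $D$ of $\mathbb{H}^3$ minus the lifted closures of the non-elementary regions.
\item Suppose all non-elementary walls abutting $D$ lay in one plane $H$. Then the frontier of $D$ would lie in $H$. Either $\Gamma$ preserves $H$, or $D$ is an open half-space whose image $\mathrm{Stab}_\Gamma(D)\backslash D\subset M$ has infinite volume. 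Both are impossible for a lattice.
\item Hence $D$ abuts non-elementary walls in two planes $H_1\neq H_2$. Their side-preserving stabilizers lie in $\mathrm{Stab}_\Gamma(D)$ and have fixed spaces exactly $\mathbb{R}v_{H_1}$ and $\mathbb{R}v_{H_2}$. Therefore $\varphi(D)\in\mathbb{R}v_{H_1}\cap\mathbb{R}v_{H_2}=0$.
\end{enumerate}
Every jump $t_iv_P$ then vanishes, so $t=0$.
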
 
The approach taken in \cite{Bart2006-im} is very algebraic; for example, the proof of Theorem \ref{thm:bas} makes use of spectral sequences, a direct analogue of the long exact sequences of Johnson and Millson. 
%This result is also in the particular setting of hyperbolic $3$-manifolds. 

There have been other strides made in the direction of this notion of generalized bending, such as in \cite{apanasov1992deformations}, \cite{Kapovich1996-sy}, \cite{tan1993deformations} and \cite{Maubon2000-ic}. The results in \cite{Bart2006-im} and \cite{tan1993deformations} are specific to $3$-manifolds. The results in \cite{apanasov1992deformations}, \cite{Maubon2000-ic} use branched bending complexes that are slightly more constrained than those in Theorem \ref{thm:bas}. Theorems \ref{thm:bbson1} and \ref{thm:bbpgln} describe branched bending in dimension $n\geq2$, with an even more generalized notion of branched bending complex.

%%%%%%%%%%%%%%%%%%%%%%%%%%%%%%%%%%%%%%%%%%%%%%%%%%%

\subsection*{Application to the Menasco-Reid Conjecture}
For finite-volume manifolds, one can ask about deformations that do not affect the cusps, as in \cite{Bart2006-im, porti2013local, Scannell2002-ve}. In this case, bending along a \emph{closed} embedded totally geodesic hypersurface remains the most well-understood way to obtain explicit deformations of finite-volume hyperbolic $n$-manifolds into other geometries, when $n\geq3$. Complements of hyperbolic knots and links are a natural class of finite-volume manifolds to consider, and in that setting we have the following question:

\begin{conjecture*}[\citet{menasco1992totally}] \label{thm:mrc}
    There does not exist a hyperbolic knot complement in $S^{3}$ that contains a closed embedded totally geodesic surface.
\end{conjecture*}

While it is now known that there exist counterexamples to this conjecture \cite{Deblois2025-pv}, the question of classifying what hyperbolic knots and links fail to admit closed, embedded totally geodesic surfaces remains interesting.
For example, in the same work where they propose this conjecture, Menasco and Reid show that it is satisfied for alternating knots, tunnel number one knots, knots and links of braid index $3$ (also proved in \cite{lozano1985incompressible}), and knots with 2-generator fundamental group. 
%The conjecture has also been verified for almost alternating knots \cite{adams1992almost}, toroidally alternating knots \cite{Adams1994-ck}, Montesinos knots \cite{Oertel1984-fn}, 3-bridge knots and double torus knots \cite{Ichihara2000-lk}, and knots of braid index $4$ \cite{matsuda2002complements}. 
The conjecture has been computationally verified for knots of up to $12$ crossings \cite{basilio_et_al}. 

We note that the conjecture was known not to hold in general for links. Menasco and Reid present an $8$-component link whose complement admits a closed embedded totally geodesic surface of genus $2$ \cite{menasco1992totally}. In fact, for any integer $g\geq2$, there exists a 2-component hyperbolic link that contains a closed embedded totally geodesic surface of genus $g$ \cite{leininger2006small}. However, there are results guaranteeing some families of hyperbolic links satisfy the Menasco-Reid conjecture, such as those that arise as the closure of $3$-braids \cite{menasco1992totally}. 

Motivated by the remaining question of what knots and links fulfill the Menasco-Reid Conjecture, we describe here an approach that makes use of deformation theory. To show that a knot or link complement does not contain a closed embedded totally geodesic surface is to show the manifold has vanishing \emph{cuspidal cohomology}. In other words, that the infinitesimal deformations of the knot exterior relative to the cusps are trivial, as studied in \cite{Bart2006-im}. A closed embedded totally geodesic hypersurface in the exterior would support a bending deformation that does not affect the cusps, and this would contribute to the dimension of the cuspidal cohomology. As an application of this argument, in \cite{Bart2006-im, kapovich1994deformations}, it is shown that cuspidal cohomology vanishes for $2$-bridge knots, and therefore that for this class of knots, there are no closed embedded totally geodesic surfaces. Here, we also compute cuspidal cohomology as an approach to construct new examples of knot and link complements with interesting properties, with a particular eye towards the Menasco-Reid conjecture. 

In the proof of both Theorems \ref{thm:bbson1} and \ref{thm:bbpgln}, we construct a system of equations which are a function of the local geometric data, and which output an infinitesimal deformation of the initial representation. We make use of these equations in analysis of the Borromean rings complement $S^{3} \backslash 6^{3}_{2}$. This link complement was previously known to not contain any closed, embedded totally geodesic surfaces \cite{menasco1992totally}; we recover this as in Corollary \ref{cor:bor}. This follows directly from Theorem \ref{thm:H1H4}, where we show the infinitesimal deformation space relative to the cusps in the $\son{4}$ setting is trivial. In contrast with this fact, Theorem \ref{thm:HPG} shows that both the dimension of the infinitesimal deformation space and the dimension of the infinitesimal deformation space relative to the cusps are larger in the $\slr{4}$ setting than in the $\son{4}$ setting. 

\begin{theorem} \label{thm:H1H4} 
    For $\Gamma = \pi_{1}(S^{3} \backslash 6^{3}_{2})$, $dim_{\mathbb{R}} H^{1}(\Gamma, \mathbb{R}^{3,1}) = 3$ and $dim_{\mathbb{R}} PH^{1}(\Gamma, \mathbb{R}^{3,1}) = 0$. 
    This space of infinitesimal deformations is spanned by bending deformations supported along six totally geodesic thrice-punctured spheres that intersect to form a non-compact branched bending complex. 
\end{theorem}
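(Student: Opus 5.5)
We prove Theorem \ref{thm:H1H4} by an explicit computation, in two halves: a lower bound from branched bending and a matching upper bound from the cocycle equations.

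\textbf{Geometric setup.} First fix a concrete presentation of $\Gamma=\pi_1(S^3\backslash 6^3_2)$ together with its holonomy $\rho:\Gamma\to SO^+(3,1)$. We would use the decomposition of the Borromean rings complement into two regular ideal octahedra, with generators given by explicit matrices over $\mathbb{Z}[i]$ (equivalently, elements of $SO^+(3,1)$ with entries in $\mathbb{Q}$). Next locate the six totally geodesic thrice-punctured spheres. Each component of the link bounds a twice-punctured disk spanned over the other two components, and these disks and their "mirror" partners give six thrice-punctured spheres. They are fixed sets of the reflection symmetries of the complement, hence totally geodesic. They meet pairwise along geodesics running cusp to cusp, and their union forms a non-compact branched complex $\Delta$ with branching locus $\mathscr{B}$.

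\textbf{Lower bound.} We compute $c_{(2)}$ and $c_{(1)}$ for $\Delta$ and check the complementary-region conditions of Theorem \ref{thm:bbson1fin}. The complementary regions are ideal polyhedra (pieces of the octahedra), which should satisfy those conditions. Theorem \ref{thm:bbson1fin} then gives a count $c_{(2)}-2c_{(1)}$. If this count falls short of $3$, we instead solve the explicit linear system of local geometric equations from its proof directly. That system assigns to each face an element of the centralizer $\mathfrak{z}\cong\mathbb{R}$ of the face stabilizer in $\son{4}$, subject to a compatibility (closing) condition in a 2-dimensional space at each branching geodesic. We expect the solution space to be $3$-dimensional, and we verify that the resulting cocycles are not coboundaries. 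This verification amounts to evaluating the cocycles on the peripheral subgroups, where a coboundary would have to be simultaneously trivial in a prescribed way.

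\textbf{Upper bound.} We compute $\dim Z^1(\Gamma,\mathbb{R}^{3,1})$ directly. Here $\mathbb{R}^{3,1}$ is the complement of $\son{3}$ in $\son{4}$ under $Ad\circ\rho$. A cocycle is determined by its values on the three (or so) generators, subject to the Fox-derivative linearization of the relators. Solving this real linear system, computed exactly over $\mathbb{Q}(i)$, and subtracting $\dim B^1=4$ (since $H^0=0$, $\rho$ being irreducible) should give exactly $3$. For $PH^1$ we restrict each cocycle to the three cusp subgroups $\mathbb{Z}^2$ and check that the restriction map $H^1(\Gamma)\to\bigoplus_{\text{cusps}}H^1(\mathbb{Z}^2,\mathbb{R}^{3,1})$ is injective on the span of the bending classes. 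Each thrice-punctured sphere bend visibly moves the cusp shapes into $\mathbb{H}^4$, so injectivity amounts to showing that a $3\times 3$ matrix of cusp invariants is nonsingular. Together with the upper bound, this gives $PH^1=0$ and shows that the bending classes span $H^1$.

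\textbf{Main obstacle.} We expect the hardest step to be the exact bookkeeping in the upper bound. This means checking linear independence of the relator equations and making sure no cocycle escapes the bending span, which hinges on choosing a presentation adapted to the octahedral decomposition. Failing that, we would compute the rank symbolically by computer.
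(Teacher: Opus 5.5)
Your Fox-calculus count ($\dim Z^1$ minus $\dim B^1=4$) is the same computation the paper uses. Certifying $PH^1=0$ by showing that restriction to the cusps is injective is a reasonable alternative to the paper's second Fox computation. The gap is in the lower bound. You apply Theorem~\ref{thm:bbson1fin} to the full six-surface complex and assert that its complementary regions, being ideal polyhedra, satisfy its hypothesis. They cannot. A simply connected region has trivial stabilizer in $\Gamma$, so $\mathrm{Fix}_\Gamma(\delta)$ is not Zariski dense, and Lemma~\ref{lem:faith} (injectivity of $E$) is unavailable. The paper notes exactly this and passes to a subcomplex of three of the surfaces (four walls, three bindings), in which every cell has two walls with Zariski-dense stabilizers. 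There \eqref{eq:eq1}--\eqref{eq:eq2} reduce to the single relation $\omega_3=\omega_2$, and injectivity of $E$ gives $\dim H^1\ge 3$.

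This is not a technicality. The surfaces meet only pairwise and orthogonally, and at such a binding \eqref{eq:eq1}--\eqref{eq:eq2} only say that each surface's weight is unchanged across it. So on the full complex the local solution space is $6$-dimensional (one weight per thrice-punctured sphere), not $3$ as you expect. Since $\dim H^1=3$, at least three dimensions of it integrate to coboundaries.

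You can see which ones. $\mathcal{P}_{m,n}$ meets cusp $n$ in two oppositely oriented parallel longitudes, whose contributions differ by a multiple of the fixed null vector. Hence its restriction to $\pi_1(T_n)$ is a coboundary, and each bend is nontrivial only on its waist cusp $m$. The two surfaces with waist $m$ then have proportional cusp restrictions, so a suitable combination of them restricts trivially to every cusp and (as $PH^1=0$) is a coboundary. Your claim that ``the resulting cocycles are not coboundaries'' is therefore false on that part of the solution space. Likewise, your $3\times 3$ cusp matrix is nonsingular only for a good choice of classes, e.g.\ one surface for each waist color, where it is diagonal.

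With that choice your argument closes without Lemma~\ref{lem:faith}. Three classes that are independent on the cusps are independent in the $3$-dimensional $H^1$, hence span it, and $\mathrm{res}$ is injective. Note also that once $\dim H^1=3$ is known, Proposition~\ref{prop:sum} already forces $\dim PH^1=3-3=0$.
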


\begin{corollary}\label{cor:bor} The Borromean rings complement does not contain any closed, embedded totally geodesic surfaces. 
\end{corollary}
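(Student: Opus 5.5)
The plan is to deduce Corollary \ref{cor:bor} from the vanishing of the cusp-relative cohomology $PH^{1}(\Gamma,\mathbb{R}^{3,1})$ in Theorem \ref{thm:H1H4}, arguing by contradiction. Suppose $\Sigma \subset M = S^{3}\backslash 6^{3}_{2}$ is a closed, embedded, totally geodesic surface. Such a surface is automatically two-sided, since $M$ is orientable and $\Sigma$ is an embedded incompressible closed surface in a link complement in $S^{3}$; this is needed for the Johnson--Millson construction, Theorem \ref{thm:jam}. Bending along $\Sigma$ then produces an infinitesimal deformation: a cocycle $z \in Z^{1}(\Gamma, \mathbb{R}^{3,1})$, where $\mathbb{R}^{3,1}$ is the complement of $\son{3}$ in $\son{4}$. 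Concretely, $z$ is built from the infinitesimal generator of the centralizer of $\pi_{1}(\Sigma)$, i.e. the normal direction to the plane $\tilde{\Sigma}$. If $\Sigma$ separates, the splitting is an amalgam $\Gamma = A *_{\pi_{1}\Sigma} B$ and $z$ vanishes on $A$ and equals $g \mapsto v - \mathrm{Ad}(g)v$ on $B$. If $\Sigma$ is nonseparating, the splitting is an HNN extension and $z$ is supported on the stable letter.

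The first step is to verify that $[z] \neq 0$ in $H^{1}(\Gamma,\mathbb{R}^{3,1})$. This is Johnson--Millson's argument: if $z$ were a coboundary $\delta w$, then restricting to $A$ and to $B$ forces $w$ to be fixed by $\mathrm{Ad}(A)$ and $v - w$ to be fixed by $\mathrm{Ad}(B)$. Both $A$ and $B$ are Zariski dense in $SO^{+}(3,1)$: they are nonelementary Kleinian groups of finite covolume pieces, and not Fuchsian since they have cusps or nonplanar limit sets. Hence both fixed vectors vanish, so $v = 0$, a contradiction. The HNN case is handled in the same way.

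The second step is to check that $[z]$ lies in the cusp-relative part, i.e. maps to zero in $\bigoplus_{P} H^{1}(P,\mathbb{R}^{3,1})$ over the three peripheral subgroups $P$. Since $\Sigma$ is closed and embedded, it is disjoint from a neighborhood of each cusp, so every peripheral $\mathbb{Z}^{2}$ is conjugate into $A$ (or $B$, or the base group of the HNN). On such a conjugate, $z$ is either identically zero or a coboundary $g \mapsto u - \mathrm{Ad}(g)u$ with $u$ the appropriate conjugate of $v$. In either case $z|_{P}$ is trivial in $H^{1}(P,\mathbb{R}^{3,1})$.

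Thus $[z]$ is a nonzero element of $PH^{1}(\Gamma,\mathbb{R}^{3,1})$, contradicting $\dim PH^{1}(\Gamma,\mathbb{R}^{3,1}) = 0$ from Theorem \ref{thm:H1H4}. The main obstacle is the nontriviality step, specifically justifying that the complementary pieces have trivial $\mathrm{Ad}$-invariants in $\mathbb{R}^{3,1}$, which requires their Zariski density. I would argue that each complementary piece contains a cusp subgroup, a parabolic $\mathbb{Z}^{2}$, together with a loxodromic element not preserving any plane. This rules out containment in the stabilizer of a totally geodesic plane, and hence gives Zariski density, which is exactly what forces the fixed vectors to vanish.
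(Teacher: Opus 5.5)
Your proposal is correct and follows the paper's route. The paper deduces Corollary~\ref{cor:bor} directly from $\dim PH^{1}(\Gamma,\mathbb{R}^{3,1})=0$ in Theorem~\ref{thm:H1H4}, on the grounds that bending along a closed embedded totally geodesic surface would produce a nonzero cusp-trivial class; you write out the Johnson--Millson nontriviality and peripheral-triviality checks that the paper leaves implicit.

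One small caution on your last paragraph: you cannot assume a priori that each complementary piece contains a cusp, since a closed surface could cut off a compact piece. Your fixed-vector step still works without that assumption. Any $\mathrm{Ad}$-invariant vector of a vertex group is $\pi_1\Sigma$-invariant, hence a multiple of $v$. A vertex group fixing $v$ would stabilize $\tilde{\Sigma}$, which would make its piece a quotient of a half-space and so of infinite volume.
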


\begin{theorem} \label{thm:HPG}
   For $\Gamma = \pi_{1}(S^{3} \backslash 6^{3}_{2})$:
   \[  dim_{\mathbb{R}} H^{1}(\Gamma, \nu_{4}) = 6, \qquad  dim_{\mathbb{R}} PH^{1}(\Gamma, \nu_{4}) = 3 \]
    %\label{thm:PH1PG}
Each of these spaces of infinitesimal deformations is parametrized by bending deformations supported along the complex described in Theorem \ref{thm:H1H4}. 
\end{theorem}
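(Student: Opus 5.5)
We compute $H^1(\Gamma,\nu_4)$ and $PH^1(\Gamma,\nu_4)$ directly from an explicit presentation, using the branched bending equations behind Theorem \ref{thm:bbpgln} (in its precise form, Theorem \ref{thm:bbpglnfin}) to produce cocycles. Here $\nu_4$ is the complement of $\son{3}$ in $\slr{4}$ under the splitting of Section \ref{subsec:splitting}. For the lower bound, the Borromean rings complement decomposes into two regular ideal octahedra. The six totally geodesic thrice-punctured spheres of Theorem \ref{thm:H1H4} are the spanning disks of the three components, each cut into two pieces by the other components. They form a branched complex $\Delta$ whose faces are these thrice-punctured spheres and whose branching locus $\mathscr{B}$ consists of the geodesic arcs where they meet.

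First we fix a presentation of $\Gamma$ and a discrete faithful representation into $SO^+(3,1)$ with entries in $\mathbb{Q}(i)$. The Borromean rings complement is arithmetic, commensurable with the Picard group, so such a representation exists. We realize the thrice-punctured spheres as fixed sets of explicit reflections. For each face $F$ of $\Delta$, the centralizer of $SO^+(2,1)$ in $SL_4(\mathbb{R})$ contributes a one-dimensional bending direction $v_F\in\nu_4$. Along each branching component, the local consistency (cocycle) condition around the branch arc gives linear equations in the bending parameters. In the $\nu_4$ setting a branch component imposes at most $3$ conditions, versus $2$ in the $\son{4}$ setting. Solving this linear system gives an explicit subspace of cocycles. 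We then check that none is a coboundary by evaluating on elements whose images under a coboundary are constrained, namely the peripheral elements and the generators fixing a face. This yields a lower bound for $\dim H^1(\Gamma,\nu_4)$, which we expect to be $6$ once the actual number of independent conditions is counted, not just the crude count $c_2-3c_1$.

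For the upper bound, and to obtain exact equality, we compute $Z^1(\Gamma,\nu_4)$ directly. A cocycle is determined by its values on the generators, subject to the linearized relations. The space $\nu_4$ is the $9$-dimensional symmetric traceless part with respect to the form. So $Z^1$ is the kernel of an explicit matrix over $\mathbb{Q}(i)$, and $B^1\cong\nu_4$ has dimension $9$ because $\Gamma$ is Zariski dense. Then $\dim H^1=\dim Z^1-9$, which should equal $6$.

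For the cuspidal part $PH^1$, we impose that the cocycle restricted to each of the three rank-two cusp subgroups $\mathbb{Z}^2$ is a coboundary there. This adds linear conditions, and we compute the rank of the resulting system. We should then see that it cuts the $6$-dimensional space down to $3$. We then identify the surviving classes with combinations of face bendings, as the statement demands, by writing the basis of $Z^1$ found above in terms of the $v_F$.

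The main obstacle is the peripheral analysis. Unlike in the $\son{4}$ case of Theorem \ref{thm:H1H4}, where every bending along a thrice-punctured sphere necessarily deforms the cusps, here the cusp groups have a larger centralizer and cohomology in $\nu_4$: $H^1(\mathbb{Z}^2,\nu_4)$ can be $2$-dimensional per cusp. We must correctly identify which linear combinations of bendings are cuspidally trivial. First we would try to compute $H^1(\mathbb{Z}^2,\nu_4)$ for each cusp explicitly using the parabolic normal form. Then we would track the image of each face bending under the restriction map. Ideally this shows that the restriction map has rank exactly $3$, which gives $6-3=3$.
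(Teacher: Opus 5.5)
Your dimension counts are fine and match the paper in substance. Computing $Z^1(\Gamma,\nu_4)$ as the kernel of the linearized relations and subtracting $\dim B^1=9$ is exactly the Fox calculus computation of Appendix \ref{app:fox}, and the same goes for the cusp conditions. Two side remarks. First, once $\dim H^1=6$ is known, Proposition \ref{prop:sum} with a one-dimensional $H^0$ per cusp already forces $\dim PH^1(\Gamma,\nu_4)=6-3=3$, so the restriction rank is not the real obstacle. Second, $H^1(\mathbb{Z}^2,\cdot)$ is also $2$-dimensional per cusp for $\mathbb{R}^{3,1}$ coefficients, so the contrast with Theorem \ref{thm:H1H4} does not come from the cusps. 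The gap is in the other half of the statement: that these spaces are parametrized by bendings. This means showing that the six bending classes are linearly independent in $H^1(\Gamma,\nu_4)$.

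Your mechanism for this does not work.
\begin{itemize}
\item Theorem \ref{thm:bbpglnfin} rests on the injectivity of $E$ (Lemma \ref{lem:faith}). That requires Zariski-dense $\mathrm{Fix}_\Gamma(\delta)$ for every cell. For the full complex of six thrice-punctured spheres the cells are simply connected, as the paper notes in the proof of Theorem \ref{thm:H1H4}, so no lower bound follows.
\item At each right-angled crossing, the branch equations only force the two halves of each surface to carry the same weight. So $\Psi_\Delta$ is just the span of the six ordinary HNN bendings along the individual embedded surfaces, and everything rests on their independence.
\item Your substitute test, evaluating on peripheral elements and on face-group generators, uses only parabolic elements: each face group in Figure \ref{fig:pantstable} is generated by two conjugates of meridians. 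A class in $PH^1$ takes values in $\mathrm{im}(1-\gamma)$ at every parabolic $\gamma$, so element by element it looks exactly like a coboundary. Such a test can see at most the $3$-dimensional image of restriction, never the $3$-dimensional cuspidal part.
\end{itemize}

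What is needed is one of two things. The first is an explicit joint check: the six bending cocycles together with a basis of $B^1$ span a $15$-dimensional subspace of $Z^1$. The second is a conjugation-invariant functional evaluated on loxodromic elements, which is what the paper does. The derivative $\frac{d}{dt}\mathrm{tr}\,\rho_t(\gamma)=\mathrm{tr}(\xi(\gamma)\rho(\gamma))$ vanishes on coboundaries. The matrix $\mathcal{F}$ of these derivatives, for the loxodromic words $x^{-1}y, xz, yz, xyz, xzy, yzx^{-1}$ under the six bendings, has nonzero determinant. The paper then exhibits the cuspidal classes as the cancelling pairs $\beta(P_{(R,G)})-\beta(P_{(R,B)})$, etc., via Lemma \ref{lem:use}, with independence inherited from $\mathcal{F}$.

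Minor point: each of the six surfaces is a disk spanned by one component and punctured twice by one other component, with two such surfaces per component. They are not pieces of three spanning disks.
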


By combining the above with established results in convex projective geometry, we are able to show:
\begin{corollary} \label{cor:proj}
The deformations described by $PH^{1}(\Gamma, \nu_{4})$  are all integrable; furthermore, they are all strictly convex projective deformations. In particular, the holonomy representation $\rho \in \chi(\Gamma, SL_4(\mathbb{R}))$ is a smooth point in the character variety for the Borromean rings complement. 
\end{corollary}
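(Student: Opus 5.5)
The plan is to combine Theorem \ref{thm:HPG} with the standard theory of convex projective deformations of finite-volume hyperbolic manifolds relative to the cusps. The holonomy $\rho:\Gamma\to SO^+(3,1)\subset SL_4(\mathbb{R})$ of the complete structure on $S^3\backslash 6^3_2$ is the starting point. The aim is to show that every class in $PH^1(\Gamma,\nu_4)$ is tangent to an actual path of representations. These should be holonomies of properly convex, in fact strictly convex, structures, with the cusps remaining cusps in the projective sense.

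\textbf{Step 1: integrability.} The natural tool is an infinitesimal-rigidity-relative-to-the-cusps criterion in the style of Heusener--Porti and Ballas--Danciger--Lee. Their result says that if the complete hyperbolic structure is infinitesimally projectively rigid rel cusps, then the character variety is smooth near $[\rho]$, and nearby representations with appropriate cusp holonomy are holonomies of convex projective structures. Here we are \emph{not} rigid, since $\dim PH^1(\Gamma,\nu_4)=3$, so the rigidity form cannot be applied directly. I would instead aim to show that $\rho$ is a smooth point of dimension $\dim H^1(\Gamma,\nu_4)=6$.

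To do this, I would bound $\dim_{[\rho]}\chi(\Gamma,SL_4(\mathbb{R}))$ from below by $6$. Then $Z^1$ equals the Zariski tangent space, which forces smoothness and hence integrability of every cocycle. The lower bound should come from the general fact that the character variety of a cusped $3$-manifold with $k$ torus cusps has local dimension at least $\sum \dim$ of the cusp deformations. For $SL_4$ one expects $2$ per cusp, or $(n-1)k$-type counts from Porti's argument via half-dimensional image in $\bigoplus_i H^1(\partial_i,\nu_4)$. With $k=3$ cusps this gives $6$.

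More concretely, I would use the obstruction-theory argument. Obstructions live in $H^2(\Gamma,\nu_4)$, and by duality (half lives, half dies) they inject into $\bigoplus_i H^2(\mathbb{Z}^2,\nu_4)$. Peripheral subgroups are abelian, so cusp representations deform freely, and the cusp obstructions vanish. This forces all obstructions to vanish, so each class in $H^1$, and in particular in $PH^1$, integrates to a path $\rho_t$.

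\textbf{Step 2: convexity.} For classes in $PH^1$, the cusp restrictions are trivial to first order. I would choose the integrating path $\rho_t$ so that the peripheral holonomy stays conjugate to a parabolic (cusp) group, for instance by working in the relative character variety. Then Cooper--Long--Tillmann openness and Ballas--Danciger--Lee's theorem on generalized cusps give that $\rho_t$ is the holonomy of a properly convex structure with cusps for small $t$. Strict convexity follows from Cooper--Long--Tillmann: a properly convex finite-volume structure whose cusps are standard and whose manifold is hyperbolic (so $\Gamma$ is relatively hyperbolic with no $\mathbb{Z}^2$ outside the cusps) is strictly convex.

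\textbf{Main obstacle.} The hardest step is the dimension/obstruction argument in Step 1, namely showing that the map $H^2(\Gamma,\nu_4)\to\bigoplus H^2(\text{cusp},\nu_4)$ is injective. I would first try Porti's ``half dimension'' Poincaré duality argument, checking that $\dim H^1(\Gamma,\nu_4)=6$ together with $\dim PH^1=3$ is consistent with the image in the cusp cohomology being exactly half-dimensional.
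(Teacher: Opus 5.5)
Your Step 1 depends on $H^2(\Gamma,\nu_4)\to\bigoplus_i H^2(\mathbb{Z}^2,\nu_4)$ being injective, and that is false for this manifold. Use the long exact sequence of $(M,\partial M)$ together with Poincar\'e--Lefschetz duality (and $\nu_4\cong\nu_4^*$ via the Killing form). Then injectivity of $H^2(M;\nu_4)\to H^2(\partial M;\nu_4)$ is equivalent to injectivity of $H^1(M;\nu_4)\to H^1(\partial M;\nu_4)$, i.e.\ to $PH^1(\Gamma,\nu_4)=0$. That is exactly infinitesimal projective rigidity rel cusps, which you correctly note fails, since Theorem \ref{thm:HPG} gives $\dim PH^1(\Gamma,\nu_4)=3$. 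So your obstruction argument quietly assumes the hypothesis you said you could not use.

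The numbers confirm this. $M$ has Euler characteristic $0$ and $H^0(\Gamma,\nu_4)=H^3(M;\nu_4)=0$, so $\dim H^2(\Gamma,\nu_4)=\dim H^1(\Gamma,\nu_4)=6$. But $\bigoplus_i H^2(\mathbb{Z}^2,\nu_4)$ is dual to $\bigoplus_i H^0(\mathbb{Z}^2,\nu_4)$ and is only $3$-dimensional. That leaves a $3$-dimensional space of possible obstructions invisible at the cusps. The ``half lives, half dies'' count is consistent with these numbers but does nothing to rule those obstructions out.

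Your dimension bookkeeping is also off. The Zariski tangent space of $\chi(\Gamma,SL_4(\mathbb{R}))$ at $[\rho]$ is $H^1(\Gamma,\mathfrak{so}(3,1))\oplus H^1(\Gamma,\nu_4)$, of dimension $6+6=12$; the first summand comes from hyperbolic Dehn filling. A local lower bound of $6$ therefore forces nothing. The best cusp count of the type you sketch (half of $\dim H^1(\mathbb{Z}^2,\mathfrak{sl}_4)=6$ per cusp) gives only $9<12$. Step 2 inherits the same problem: choosing paths that keep the cusp holonomy parabolic ``in the relative character variety'' presupposes the same unobstructedness.

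The missing idea is that no obstruction theory is needed, because these classes are integrable by construction. The paper takes the basis $\beta(R),\beta(B),\beta(G)$ of $PH^1(\Gamma,\nu_4)$ from Proposition \ref{thm:PH1PG}. Each is a difference of bending cocycles along two disjoint totally geodesic thrice-punctured spheres, so it is tangent to an honest composition of two projective bendings. Lemma \ref{lem:use} shows that, with opposite orientations, the cusp holonomy stays conjugate to the original one. Because the six surfaces are disjoint or meet orthogonally, these bendings commute, giving a genuine $3$-parameter family.

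Strict convexity then follows from Ballas's criterion, Proposition \ref{prop:sam}. The union of a cancelling pair meets every cusp torus in two oppositely oriented parallel curves, so the signed intersection with a transverse curve is zero. Once you have such cusp-preserving paths, your Cooper--Long--Tillmann argument would be a legitimate alternative for this last step.

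Finally, the smoothness clause requires showing that the local dimension at $[\rho]$ is $12$. The paper's written proof only addresses integrability and strict convexity of $PH^1(\Gamma,\nu_4)$, so that clause needs an argument beyond both your sketch and the paper's.
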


\subsection*{Organization}

In Section \ref{prelim}, we will give a review of bending deformations, especially in the settings of $SO(n,1)$ and $SL_n(\mathbb{R})$, as well as a discussion of group cocycles and their relationship to infinitesimal deformations. In Section \ref{sec:book}, we establish a formalism adapted from \cite{Danciger2016-qq} in order to describe infinitesimal deformations. In Section \ref{sec:low}, we describe a general form of lower bound of the dimension of a deformation space for a hyperbolic manifold with a given branched bending complex. 
In Section \ref{sec:prod}, we prove Theorems \ref{thm:bbson1} and \ref{thm:bbpgln}; in doing so, we define and explore the construction of branched bending deformations, motivated by the works of \cite{Apanasov1990-lc}, \cite{Bart2006-im}, and \cite{Bart2007-kc}. In Section \ref{sec:defbor}, we go over a special example of the ideas developed in Theorems \ref{thm:bbson1} and \ref{thm:bbpgln} by looking to cusped hyperbolic $3$-manifolds, and prove Theorems \ref{thm:H1H4} and \ref{thm:HPG}.

\subsection*{Acknowledgments}

The author wishes to thank Fernando Camacho-Cadena, James Farre, Daniel Allcock, Marit Bobb, Sara Maloni, and Anna Wienhard for helpful conversations on earlier versions of this project. Special thanks to Yair Minsky, for enlightening conversation on 3-manifolds, and Joan Porti, for pointing to the Borromean rings as a potential example of interesting phenomena. The author is deeply grateful to Dick Canary for feedback on earlier drafts of this paper. Finally, the author wishes to thank their advisor Jeff Danciger for many emails, edits, and invaluable conversations.

\section{Background} \label{prelim}

\subsection{Geometric Structures and Bending Deformations}
Let $\Gamma$ be a torsion-free lattice in Isom$(\mathbb{H}^{n})$. Then $M=\mathbb{H}^{n}/\Gamma$ is a finite volume hyperbolic $n$-manifold. We identify $\Gamma$ with the image of $\rho:\pi_1(M) \to$ Isom$(\mathbb{H}^{n})$, where $\rho$ is a discrete and faithful representation. Let $\Sigma$ be an embedded, totally geodesic hypersurface in $M$, and let $S=\pi_1(\Sigma)$. 

Consider a semisimple Lie group $G$ such that Isom$(\mathbb{H}^{n}) \subseteq G$. We consider $C_{G}(S)$, which, for the choice of $G$ relevant to our applications, will be 1-dimensional (but is not so necessarily). When $C_{G}(S)$ is a 1-parameter family, we denote it $c(t)$, and also specify that $c(0)=I$.

When $\Sigma$ splits $M$ into $M_1$ and $M_2$, and thus $\pi_{1}(M)=\pi_{1}(M_{1}) \ast_{S} \pi_{1}(M_{2})$, then \emph{bending} the representation $\rho$ along $S$ is defined as: 
\[ 
\rho_t(\gamma) = 
\begin{cases} 
      \rho_{0}(\gamma), &  \gamma \in \pi_1(M_1) \\
       c(t)\rho_{0}(\gamma)c(t)^{-1}, & \gamma \in \pi_1(M_2)
   \end{cases}
\]

When $\Sigma$ is nonseparating, then  $\pi_{1}(M)=\pi_{1}(M_{\Sigma}) \ast_{\alpha}$, where $M_{\Sigma} = M\backslash\Sigma$ and $\alpha$ is the stable letter of the HNN extension induced by $\Sigma$. Then \emph{bending} the representation $\rho$ along $S$ is defined as: 
\[ 
\rho_t(\gamma) = 
\begin{cases} 
      \rho_{0}(\gamma), & \gamma \in \pi_1(M_{\Sigma}) \\
       c(t)\rho_{0}(\alpha), & \gamma = \alpha
   \end{cases}
\]

\subsubsection{$G=SO(n+1,1)$}
When $G=SO(n+1,1)$, the bending deforms the initial $\H{n}$-structure on $M$ to a $\H{n+1}$-structure. A  totally geodesic hypersurface $\Sigma$ will have a 1-parameter centralizer, which can be represented, up to conjugation, as follows:
\[
C_G(S) = \left\{c(t) = 
\left(
    \begin{NiceArray}{ccccc}
     \cos(t) & -\sin(t)  &    & &  \\
        \sin(t) & \cos(t) &  &  &      \\ 
        &&1 \Block[borders={top,left, tikz=dashed}]{3-3}{} && \\
          & &     & \ddots & \\
         &  &       &  & 1
    \end{NiceArray}
\right) 
\ 
\middle\vert
\
t \in \mathbb{R}
\right\}
\]
where the bottom right $n\times n$ block represents the copy of $\H{n-1}\subset \H{n}$ containing $\Tilde{\Sigma}$. 

\subsubsection{$G=SL_{n+1}(\mathbb{R})$}
When $G=SL_{n+1}(\mathbb{R})$, the bending deforms the initial $\H{n}$-structure on $M$ to a $\mathbb{RP}^{n}$ structure. A totally geodesic hypersurface $\Sigma$ will have a 1-parameter centralizer, which can be represented, up to conjugation, as follows:
\[ 
C_G(S) = \left\{c(t) =
\left(
    \begin{NiceArray}{cccc}
     e^{-nt} &   &    &   \\
             & e^{t}\Block[borders={top,left, tikz=dashed}]{3-3}{} &  &      \\
           &     & \ddots & \\
           &       &  & e^{t}
    \end{NiceArray}
 \right)
 \ 
\middle\vert
\
t \in \mathbb{R}
 \right\}
\] 
where the bottom right $n \times n$ block represents the copy of $\H{n-1}\subset \mathbb{RP}^{n}$ containing $\Tilde{\Sigma}$.

The original proof of Johnson and Millson verifies that these bending deformations are nontrivial by showing that the infinitesimal deformation tangent to this bending is identified with a nontrivial cocycle in group cohomology with appropriate coefficients. We take this opportunity to define this group cohomology in the next subsection.

\subsection{Group Cohomology and Infinitesimal Deformations}

To study discrete and faithful representations $\rho: \Gamma \to G$, we study the representation variety Hom$(\Gamma, G)$. We denote the character variety $\chi(\Gamma,G)=\text{Hom}(\Gamma,G)\sslash G$; that is, it is the representation variety considered up to conjugation by $G$. (As taking a quotient by $G$ may yield something non-Hausdorff, we take $\sslash$ to be the GIT-quotient, and refer the reader to \cite{Lubotzky1985-vm}, \cite{Luna1975-mq} for further explanation.) 

For a representation $\rho \in \chi(\Gamma,G)$, deformations of $\rho$ correspond to paths in $\text{Hom}(\Gamma,G)$ through $\rho$. To find such paths is nontrivial, and strides have been made doing this experimentally, such as in \cite{cooper2006computing}. However, to find infinitesimal deformations, we look to the Zariski tangent space $T_{\rho}\chi(\Gamma, G)$. The dimension of this tangent space is an upper bound on the number of distinct integrable deformations of $\rho$, and when $\rho$ is a smooth point in the representation variety, this bound becomes an equality. 

Thankfully, we can identify the Zariski tangent space $T_{\rho}\chi(\Gamma, G)$ with $H^1(\Gamma, \gad)$. To see this, let the smooth path $\rho_t$: $[0,1] \to \text{Hom}(\Gamma,G)$ be a deformation of $\rho=\rho_0$. Then the tangent vector to the deformation at $t=0$ can be described by assigning an element of the Lie algebra $\gad$ in the following manner:
\[ c(\gamma) = \dot{\rho}(\gamma) \rho(\gamma)^{-1} \]
That is, we take the derivative of $\rho(\gamma)$, which gives us an element of the tangent space $T_{\rho(\gamma)}G$. To land in the Lie algebra of G, $\mathfrak{g}$, we need to take values in $T_{e}G$, and so we correct for this with $\rho(\gamma)^{-1}$. Then we obtain a map $c: \Gamma \to \gad$.
Since each $\rho_t$ is a homomorphism, differentiation yields the following condition that our map $c$ satisfies: 
\[ c(\gamma_1 \gamma_2 ) = c(\gamma_1) + \mathrm{Ad}(\gamma_1)c(\gamma_2)\]
for all $\gamma_1, \gamma_2 \in \Gamma$. This condition is the cocycle condition, and so we see that each such $c$ as defined above is a map such that $c \in Z^1(\Gamma, \gad)$. Thus, we can identify $T_\rho$Hom$(\Gamma, G)$ with $Z^1(\Gamma, \gad)$.

The coboundaries in this construction are the tangent vectors to deformations arising from conjugation, giving rise to the coboundary condition: 
\[ c(\gamma) \in \text{im}(I-\text{Ad}_\gamma), \ \forall \gamma \in \Gamma.\] 

Together, we have 
\[ H^1(\Gamma, \gad) = Z^1(\Gamma, \gad) / B^1(\Gamma, \gad)\]
and in our setting, can identify this with $T_{\rho}\chi(\Gamma, G)$. (We note that this identification is possible in our setting because $\rho(\Gamma)$ has trivial centralizer in the group $G$ used throughout. For further discussion of these ideas, see \cite{Huebschmann2001-mn} and \cite{Weil1964-nb}.)

Throughout, we will be interested in computing the $\mathbb{R}$-dimension of this cohomology with appropriate coefficients as a way of understanding the infinitesimal deformation theory of the hyperbolic structure associated with $\rho$. 

In this paper, for $G=SO^{+}(n+1,1)$ or $SL_{n+1}(\mathbb{R})$ and a discrete and faithful representation $\rho: \Gamma \to G$, we construct an element $\xi \in Z^1(\Gamma, V)$ where $V=\mathbb{R}^{n,1}$ or $\nu_{n+1}$. These choices of $V$ arise as $\text{Ad}_{SO(n,1)}$-invariant subspaces of $\son{n+1}$ and $\slr{n}$ respectively. That is, there is an $Ad$-invariant splitting of $\mathfrak{so}(n+1,1)$,  so that
 \[ \mathfrak{so}(n+1,1) = \mathfrak{so}(n,1) \oplus \mathbb{R}^{n,1} \]
 while for $\slr{n+1}$, we have 
\[ \slr{n+1} = \mathfrak{so}(n,1) \oplus \nu_{n+1}. \]

\subsubsection{Parabolic and Cuspidal Cohomology} \label{subsec:splitting}

In Section \ref{sec:defbor}, we study special instances of group cohomology; we offer a brief definition of these terms here and direct the reader to Appendix \ref{app:cusp} for further discussion. 

We define \emph{parabolic cohomology} as follows. Let $\Gamma$ be a nonuniform lattice in $SO(n,1) \subset G$ such that $\mathbb{H}^{n}/\Gamma$ is a finite volume, cusped hyperbolic $n$-manifold $M$. If $\rho:\pi_1(M)\to G$, and im$(\rho) = \Gamma$ , then:
\[ PH^{1}(\Gamma , \mathfrak{g}_{Ad_{\rho}}) = PZ^{1}(\Gamma , \mathfrak{g}_{Ad_{\rho}})/B^{1}(\Gamma , \mathfrak{g}_{Ad_{\rho}}) \]
where 
\[PZ^{1}(\Gamma , \mathfrak{g}_{Ad_{\rho}}) = \{ c \in Z^{1}(\Gamma , \mathfrak{g}_{Ad_{\rho}})\  \vert\  c(\gamma) \in \text{im}(I-\text{Ad}_\gamma), \  \forall \gamma \ \text{parabolic} \} \]

On the other hand, we define the \emph{cuspidal cohomology} of $M$, denoted throughout here as $H^{1}_{\partial}(\Gamma , \mathfrak{g}_{Ad_{\rho}})$ as:
\[ \ker \Big( \text{res:}\  H^{1}((\Gamma\backslash\H{3}), \mathfrak{g}_{Ad_{\rho}}) \to H^{1}(\partial(\Gamma\backslash\H{3}), \mathfrak{g}_{Ad_{\rho}}) \Big)  \]
where $\mathfrak{g}$ is the Lie algebra of $G$.

\section{Bookkeeping for Cohomology Classes} \label{sec:book}
We adapt the formalism introduced in Section 4.2 of \cite{Danciger2016-qq} to our setting. The main idea of this section of their paper is to describe infinitesimal deformations of the hyperbolic structure of convex cocompact surfaces; to do so, they decompose these manifolds along a \emph{geodesic cellulation} into tiles. By looking at the preimage of these tiles in the universal cover, they characterize an infinitesimal deformation as an equivariant assignment of \emph{relative motions} of neighboring tiles. In our setting, we will be making precise how to describe an infinitesimal deformation supported on a \textit{branched bending complex} in a finite volume hyperbolic $n$-manifold $M=\Gamma\backslash\H{n}$.

We consider a \emph{branched geodesic decomposition} $\Delta$ of $M$ to be a decomposition along a branched bending complex; that is, a decomposition of $M$ into top-dimensional \emph{regions} $\mathscr{C}$, along codimension-$1$ pieces called \emph{walls} $\mathscr{W}$, meeting at pieces of codimension-$2$ called \emph{bindings} $\mathscr{B}$. As mentioned in the descriptions of Theorems \ref{thm:bbson1}, \ref{thm:bbpgln}, in our setting, each element in $\mathscr{W}$ is totally geodesic in $M$, with fundamental group with Zariski dense image in $SO^+(n-1,1)$. We pass to the universal cover $\Tilde{M}$ of $M$, and let $\Tilde{\Delta}$ be the preimage of $\Delta$; we use a similar convention in the denotation of $\Tilde{\mathscr{C}}$, $\Tilde{\mathscr{W}}$, and $\Tilde{\mathscr{B}}$. The elements of $\Tilde{\mathscr{C}}$ will be called \emph{cells}. As each element of $\Tilde{\mathscr{W}}$ is of codimension-$1$, we are able to assign these elements a transverse orientation. Borrowing from the language of \cite{Danciger2016-qq}, we let each element of $\Tilde{\mathscr{W}}$ along with a transverse orientation be denoted $\pm\Tilde{\mathscr{W}}$, and say for each $w \in \pm\Tilde{\mathscr{W}}$, $-w$ is the same wall with opposite transverse orientation. 

We make use of two maps: a map $\varphi: \Tilde{\mathscr{C}} \rightarrow V$ that describes the global motion of the cells, and a map $\psi: \pm\Tilde{\mathscr{W}} \rightarrow V$ that describes the \emph{relative} motion of the cells: $\psi(w)=\varphi(\delta') - \varphi(\delta)$ for $\delta\in \mathscr{C}$ on the negative side of $w$ and $\delta'\in \mathscr{C}$ on the positive side of $w$. 
%Together, these maps will help us construct a cocycle $\xi$ via integration.

\begin{figure}
    \centering
    \includegraphics[width=0.5\linewidth]{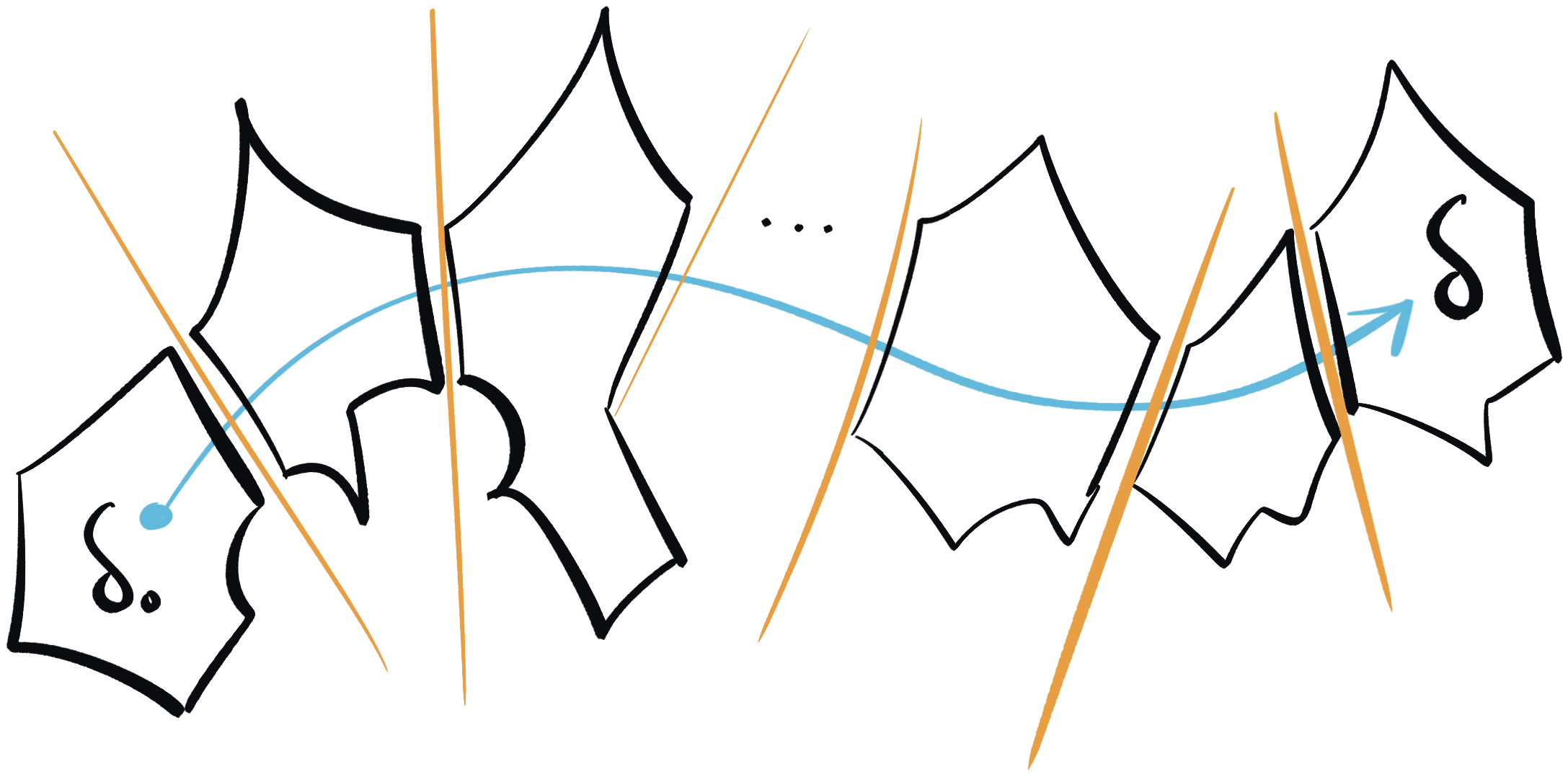}
    \caption{A path from $\delta_0$ to $\delta$, as in the definition of the $\varphi$ map }
    \label{fig:placeholder}
\end{figure}

We aim to ``integrate'' the $\psi$ map to construct an appropriate corresponding $\varphi$ map. With this in mind, we require the following \emph{consistency conditions} on $\psi$: 
\begin{enumerate}
    \item $\psi$ is $\rho$-equivariant, here meaning: 
\[ \psi(\gamma\cdot w) =  \rho(\gamma)\cdot \psi(w)\]
for all $\gamma \in \Gamma$ and for any $w\in \pm\Tilde{\mathscr{W}}$;

    \item $\psi(-w)=-\psi(w)$ for all $w \in \pm\Tilde{\mathscr{W}}$;

    \item the total motion around any binding is zero: that if there exists a path via a sequence of transversely oriented $w_1, \dots, w_k \in \pm\Tilde{\mathscr{W}}$ that forms a loop around some binding $b \in \mathscr{B}$, then  $\sum^k_i \psi(w_i)=0$.
\end{enumerate}
If $\psi$ satisfies all of the above conditions, then define:
\[ \varphi(\delta) := \zeta_0 + \sum \psi(w) \]
where $\delta \in \Tilde{\mathscr{C}}$ and $\varphi$ sums over all of the transversely oriented edges $w$ along some path starting in $\delta_0$ and ending in $\delta$. This construction makes use of initial data: a cell $\delta_0 \in \mathscr{C}$ and an initial motion $\zeta_0 \in V$. However, we will show later that our construction is ultimately independent of this choice.

\begin{lem}
Let $\psi$ be as defined above. Then the following formula well-defines a map $\varphi: \mathscr{C} \to V$ such that 
\[ \varphi(\delta) := \zeta_0 + \sum \psi(w) \]
where $\delta \in \Tilde{\mathscr{C}}$ and $\varphi$ sums all of the transversely oriented edges $w$ along some path starting in $\delta_0$ and ending in $\delta$ is independent of the choice of path.
\end{lem}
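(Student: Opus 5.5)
The plan is to show that the sum of $\psi$ over any closed path in the dual picture vanishes. Path-independence then follows at once: two paths from $\delta_0$ to $\delta$ differ by the closed path obtained by going out along one and back along the reverse of the other. By consistency condition (2), reversing a path negates each of its terms, since crossing $w$ backwards is crossing $-w$. So the difference of the two sums is the sum over a closed path, and it suffices to show that this is zero.

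To make "path" precise, I would work with generic paths in $\Tilde{M}$: piecewise smooth paths that meet $\Tilde{\Delta}$ transversely, only in the interiors of walls, and never touch the bindings $\Tilde{\mathscr{B}}$. Each crossing records a transversely oriented wall $w \in \pm\Tilde{\mathscr{W}}$. Two such paths with the same endpoints are homotopic in $\Tilde{M}$, because $\Tilde{M} \cong \mathbb{H}^n$ is simply connected. A closed generic path therefore bounds a disk. Since the bindings have codimension $2$, a general position argument lets me put this homotopy in general position relative to the stratification $\Tilde{\mathscr{C}} \cup \Tilde{\mathscr{W}} \cup \Tilde{\mathscr{B}}$. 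The homotopy then passes through only finitely many non-generic moments, each of one of two types:
\begin{enumerate}
\item the path becomes tangent to a wall, which creates or cancels a pair of crossings $w, -w$;
\item the path sweeps across a binding $b$, at codimension $2$, so it meets $b$ at isolated times.
\end{enumerate}

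Next I check that the sum $\sum \psi(w_i)$ is unchanged by each type of move. A move of type (1) inserts or deletes $\psi(w) + \psi(-w)$, which is $0$ by condition (2). A move of type (2) replaces the sequence of walls crossed on one side of $b$ by the sequence crossed on the other side. These two sequences together form a small loop around $b$, so by condition (3) together with (2) they have equal sums. Hence the sum is invariant under homotopy. The constant loop at $\delta_0$ has sum $0$, so every closed path has sum $0$, and $\varphi$ is well defined. Finally, any two cells can be joined by some generic path because $\Tilde{M}$ is connected, so $\varphi$ is defined on all of $\Tilde{\mathscr{C}}$. Condition (1), equivariance, is not needed for well-definedness; it will only be used later to relate $\varphi$ to a cocycle.

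The main obstacle will be the general position step. I need to justify that the homotopy can be chosen so that it meets bindings only at isolated points, and that the local picture near a binding point really is a loop around $b$ of the kind covered by condition (3). Near a generic point of a binding, $\Tilde{\Delta}$ is a union of finitely many totally geodesic half-hyperplanes meeting along a codimension-$2$ geodesic subspace, so a transverse disk slice looks like finitely many rays from a point. In that slice, condition (3) exactly says the cyclic sum vanishes. Complications could arise if bindings are not manifolds or walls accumulate, but local finiteness of the decomposition, which holds since it is the lift of a finite complex in $M$, rules this out. If this approach proves delicate, my fallback is to phrase everything via the dual $2$-complex: cells are vertices and walls are edges, and I attach $2$-cells along the loops around each binding. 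This $2$-complex is simply connected because $\Tilde{M}$ is, and $\psi$ is a $1$-cochain that conditions (2) and (3) make closed, hence exact.
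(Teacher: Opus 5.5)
Your proposal is correct and follows essentially the same route as the paper. The paper argues that any two paths ``differ by a loop'': it homotopes one path to the other through a family of paths that each cross one binding at a time, and concludes from consistency condition (3). Your version is more careful than the paper's sketch, since it makes explicit the use of simple connectivity of $\Tilde{M}$, the general-position reduction, and the role of condition (2) in cancelling back-and-forth crossings of a single wall.
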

The heart of this proof comes down to the fact that any two paths ``differ by a loop'' in some sense. The result then follows from consistency condition (3). 
% \begin{proof}
%     Consider two paths $p$ and $p'$ both from $\delta_0$ to $\delta$. We want to show that value of $\varphi(\delta)$ is independent of this choice. We do this in an almost ``inductive" manner. That is, we can construct a (finite) family of paths $\{p_k\}$, $k\in \mathbb{N}$ so that $p_0 = p$, $p_N = p'$ and each $p_{i}, p_{i+1}$ differ by crossing over at most one binding. 

%     Then we can now argue that paths that differ by crossing over at most one binding yield the same value under $\varphi(\delta)$. The $p_i\  \cup -p_{i+1}$ is a loop around some binding $b$, and thus has total sum zero. And so, $\varphi_{p_{i}}(\delta) = \varphi_{p_{i+1}}(\delta)$.

%     Thus, there is a homotopy of $p$ to $p'$ consisting of paths that cross one binding at a time, where the value along all of these homotopic paths is preserved. 
% \end{proof}

We can now define our cocycle. Choose a cell $\delta_0 \in \mathscr{C}$ and an initial motion $\zeta_0 \in V$. Then
\[ \xi(\gamma) := \varphi( \gamma\cdot \delta_0) -  (\rho(\gamma)\cdot\varphi(\delta_0))\]
is a $\rho$-cocycle.

% To see $\xi$ defines a cocycle, we show that it satisfies the \emph{cocycle condition}. We want to show 
% \[ \xi(\alpha\beta ) = \xi(\alpha) +  \rho(\gamma)\cdot \xi(\beta)\]
% The LHS of this equality becomes 
% \begin{align*}
%     \xi(\alpha\beta ) &=  \varphi( \alpha\beta\cdot \delta_0) -  \rho(\alpha\beta)\varphi(\delta_0) 
% \end{align*}  
% while the RHS becomes
% \begin{align*}
%     \xi(\alpha) +  \rho(\alpha)\cdot \xi(\beta) &= \varphi( \alpha\cdot \delta_0) -  \rho(\alpha)\varphi(\delta_0) +  \rho(\alpha)\Big(\varphi(\beta\cdot \delta_0) -  \rho(\beta)\varphi(\delta_0)\Big) \\
%                 &= \varphi( \alpha\cdot \delta_0) -  \rho(\alpha)\varphi(\delta_0) +   \rho(\alpha)\varphi( \rho(\beta)\cdot \delta_0) -  \rho(\alpha\beta)\varphi(\delta_0) \\
%                 &= \varphi( \alpha\cdot \delta_0) +  \rho(\alpha)\Big(\varphi( \beta\cdot \delta_0) - \varphi(\delta_0)\Big) -  \rho(\alpha\beta)\varphi(\delta_0)
% \end{align*}
% and so it suffices to show that 
% \begin{equation}    \label{eq:cocycle}
%  \varphi( \alpha\beta\cdot \delta_0) =  \varphi( \rho(\alpha)\cdot \delta_0) +  \rho(\alpha)\Big(\varphi( \rho(\beta)\cdot \delta_0) - \varphi(\delta_0)\Big) 
%  \end{equation}
% However, one sees that the right of this equation truly is a decomposition of the sum on the left; the RHS can be seen as the sum of the $\psi$ values along the concatenation of paths $\delta_0$ to $\rho(\alpha) \delta_0$ and $\rho(\alpha) \delta_0$ to $\rho(\alpha\beta) \delta_0$. 

Here is one way of thinking about what the cocycle $\xi$ is measuring. 
\begin{align*}
    \xi(\gamma) &= \varphi( \gamma\cdot \delta_0) -  \rho(\gamma)\varphi(\delta_0) \\
              &=   \varphi( \delta_0) + \varphi( \gamma\cdot \delta_0) -\varphi( \delta_0) -  \rho(\gamma)\varphi(\delta_0)\\
              &= \Big( 1-\gamma \Big) \varphi( \delta_0) + \Big( \varphi( \gamma\cdot \delta_0) -\varphi( \delta_0) \Big) 
\end{align*}
Note that $\Big( 1- \gamma \Big) \varphi( \delta_0)$ is a coboundary. We are then left to consider $\Big( \varphi(\gamma\cdot \delta_0) -\varphi( \delta_0) \Big)$, which is the total relative motion from $\delta_0$ to $\gamma\cdot\delta_0$. The cocycle $\xi(\gamma)$ exactly assigns the total change in relative motion from some cell to a translate of that cell by $\gamma$. Ultimately, the final cohomology class does not rely on our choice of $\delta_0$.

We see that these $\xi$ are cocycles; however, a different choice of initial data $\delta_0$ and $\zeta_0$ (and thus a different $\varphi$), yields a cohomologous $\xi'$. 
Thus, up to a choice in initial data, we can discuss $[\xi] \in H^1(\Gamma, V)$.

In summary, $\xi: \Gamma \to V$ is a $\rho$-cocycle whose cohomology class depends only on the data of $\psi$. If we let $\Psi(\pm \Tilde{\mathscr{W}}, V)$ denote the vector space of maps $\psi:\pm \Tilde{\mathscr{W}} \to V$ satisfying the consistency conditions defined earlier, then we see that by the above arguments, we have the $\mathbb{R}$-linear map $E$ such that:
\begin{align*}
    E: \Psi(\pm\Tilde{\mathscr{W}},V) &\longrightarrow H^1(\Gamma, V)\\
    \psi &\longmapsto [\xi]
\end{align*}
To use this map to establish a lower-bound on the dimension of $H^1(\Gamma, V)$, we establish the injectivity of $E$. Throughout, recall that $Fix_{\Gamma}(\delta)$ denotes the set of elements in $\Gamma$ that fix a cell $\delta \in \mathscr{C}$ setwise
\begin{lem} \label{lem:faith}
    Let $E$ be as defined above. If $SO^+(n,1)$ acts faithfully on V, then $E$ is injective. 
\end{lem}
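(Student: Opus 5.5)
The plan is to show that if $E(\psi)=[\xi]$ is trivial, then the global motion map $\varphi$ can be normalized to be $\rho$-equivariant, so that each value $\varphi(\delta)$ is a vector of $V$ fixed by the stabilizer of the cell $\delta$; the hypothesis on $V$ then forces these values, and hence $\psi$, to vanish. Since $E$ is linear, it suffices to show $\ker E=0$. So suppose $\psi\in\Psi(\pm\Tilde{\mathscr{W}},V)$ and that the cocycle $\xi$ built from some initial data $(\delta_0,\zeta_0)$ is a coboundary. Then there is a vector $v\in V$ with $\xi(\gamma)=v-\rho(\gamma)v$ for all $\gamma\in\Gamma$.

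\emph{Step 1: normalize $\varphi$.} Set $\varphi'(\delta):=\varphi(\delta)-v$. This is the map obtained from the same $\psi$ with initial motion $\zeta_0-v$, so it still satisfies $\varphi'(\delta')-\varphi'(\delta)=\psi(w)$ across every transversely oriented wall $w$. The coboundary identity becomes
\[ \varphi'(\gamma\cdot\delta_0)=\rho(\gamma)\,\varphi'(\delta_0)\qquad\text{for all }\gamma\in\Gamma. \]

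\emph{Step 2: propagate equivariance to every cell.} Fix $\gamma$ and define $D(\delta):=\varphi'(\gamma\cdot\delta)-\rho(\gamma)\varphi'(\delta)$. Compare a path $\delta_0\to\delta$ with its translate $\gamma\delta_0\to\gamma\delta$. Consistency condition (1) gives $\psi(\gamma w)=\rho(\gamma)\psi(w)$, and the previous lemma says the sums are path-independent. Together these show that $D$ is constant along paths, so $D(\delta)=D(\delta_0)=0$. Hence $\varphi'$ is $\rho$-equivariant on all of $\Tilde{\mathscr{C}}$. In particular, for every cell $\delta$ and every $\gamma\in Fix_\Gamma(\delta)$ we have $\rho(\gamma)\varphi'(\delta)=\varphi'(\delta)$, so $\varphi'(\delta)\in V^{\rho(Fix_\Gamma(\delta))}$.

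\emph{Step 3: kill the invariant vectors.} This is where the hypotheses enter, and it is the main obstacle. I would use the ``mild conditions on $\Tilde M\backslash\Tilde\Delta$'' (the analogue of Johnson--Millson's Lemma 5.4) in the form: each $Fix_\Gamma(\delta)$ has Zariski dense image in $SO^+(n,1)$. Since the action on $V$ is algebraic, a vector fixed by $\rho(Fix_\Gamma(\delta))$ is then fixed by all of $SO^+(n,1)$. I would then argue that $V^{SO^+(n,1)}=0$ for the relevant $V$. Both $\mathbb{R}^{n,1}$ and $\nu_{n+1}$ are irreducible, nontrivial $SO^+(n,1)$-modules, and a nonzero invariant vector would span a trivial summand. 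The faithfulness hypothesis is what excludes the trivial module; I expect some care is needed in phrasing exactly how faithfulness (plus irreducibility of the concrete $V$) rules out fixed vectors. Granting this, $\varphi'\equiv 0$.

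\emph{Step 4: conclude.} Then $\psi(w)=\varphi'(\delta')-\varphi'(\delta)=0$ for every wall $w$, so $\psi=0$ and $E$ is injective.

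If the Zariski density of cell stabilizers is not available, I would fall back on using wall stabilizers, which are Zariski dense in $SO^+(n-1,1)$. These only show that $\psi(w)$ lies in the one-dimensional $SO^+(n-1,1)$-fixed line, i.e.\ the infinitesimal bending direction. That is not enough by itself, which is why I expect the cell-stabilizer condition to be the essential input.
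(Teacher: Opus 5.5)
Your proposal is correct and follows the paper's argument. You normalize $\varphi$ so that the cocycle vanishes, deduce $\varphi(\gamma\cdot\delta)=\rho(\gamma)\varphi(\delta)$, use Zariski density of $Fix_\Gamma(\delta)$ to make each $\varphi(\delta)$ an $SO^+(n,1)$-invariant vector (hence zero), and conclude $\psi=0$. Your write-up is in fact more careful than the paper's on three points: the paper takes $\zeta_0=0$ instead of shifting by the coboundary vector, asserts equivariance at every cell without your Step 2, and invokes bare faithfulness where what is actually needed is $V^{SO^+(n,1)}=0$, which you correctly obtain from irreducibility of $\mathbb{R}^{n,1}$ and $\nu_{n+1}$.
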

\begin{proof}
We proceed by assuming a $\psi$ map is mapped to the 0-cohomology class. By choosing $\zeta_0=0$ the associated $\varphi$ map is such that the cocycle $\xi$ is the 0-cocycle. 

As $\xi$ is assumed to be the 0-cocycle (and thus in the kernel of $E$), then we assert $\Big( \varphi( \gamma\cdot \delta) - \rho(\gamma)\varphi( \delta) \Big)=0$ for all $\gamma \in \Gamma$ and for all $\delta \in \mathscr{C}$.
%by \textcolor{red}{rem:zeta}. 
Hence, for all $\delta \in \mathscr{C}$ and $\gamma \in Fix_{\Gamma}(\delta)$, 
\[ \rho(\gamma)\varphi(\delta)=\varphi(\gamma\cdot \delta) = \varphi(\delta) \]
and so $\varphi(\delta)$ is fixed by $Fix_{\Gamma}(\delta)$. As we required that $Fix_{\Gamma}(\delta)$ is Zariski-dense in $SO^+(n,1)$ and that $SO^+(n,1)$ acts faithfully on $V$, then it follows that  $\varphi(\delta)=0$. This holds for all $\delta \in \mathscr{C}$, hence $\psi$ is identically zero, as desired. 
\end{proof}

\begin{remark*}
    To guarantee that $Fix_{\Gamma}(\delta_0)$ is Zariski-dense in $SO^+(n,1)$, it suffices that for each wall  $w \in \pm \Tilde{\mathscr{W}}$, the image of $\pi_1(w)$ under $\rho$ is Zariski dense in a copy of $SO^+(n-1,1)$, but not all the same copy. In fact, it is enough for each cell $\delta \in \tilde{\mathscr{C}}$ to have two walls with Zariski-dense image in distinct copies of $SO^+(n-1,1)$. Then we claim $Fix_{\Gamma}(\delta) \supset \langle Fix_{\Gamma}(w) \ |\  w \in \partial \delta \rangle$. To see this, we first notice that an element $s \in Fix_{\Gamma}(w)$ is also an element of $Fix_{\Gamma}(\delta)$. Then the product over elements of $Fix_{\Gamma}(w)$ ranging over $w\in \partial\delta$ must also fix $\delta$. The group generated by $\langle Fix_{\Gamma}(w) \ | w \in \partial \delta \rangle$ thus has Zariski closure a copy of $SO^+(n,1)$. In fact, it is enough for each cell $\delta \in \tilde{\mathscr{C}}$ to have two walls with Zariski-dense image in distinct copies of $SO^+(n-1,1)$ for identical reasons. Compare this to the condition that all the walls have non-elementary Fuchsian fundamental group in Theorem \ref{thm:bas}. 
\end{remark*}

We now have an injective map $E: \Psi(\pm\Tilde{\mathscr{W}},V) \to H^1(\Gamma, V)$. For a given $M$ and $\Delta$, it remains now to describe the family of all such $\psi \in \Psi$ supported on $\Delta$. 

\section{Establishing the General Form for the Lower Bound} \label{sec:low}
Let $M$ and $\Delta$ be as described previously. Recall that $\mathscr{W}$ is the collection of (totally geodesic) codimension-$1$ walls in $\Delta$, and $\Tilde{\mathscr{W}}$ is the preimage of $\mathscr{W}$ in $\Tilde{M}$. We aim to construct a map $\psi: \pm\Tilde{\mathscr{W}} \to V$ from the data of $\mathbb{R}$-valued weights on walls we shall define. When relevant, we will specify that  $V$ is either $\mathbb{R}^{n,1}$ or $\nu_{n+1}$. 

We consider for now deformations of $\rho$ into the Lie group $G$, where $G$ is either $SO^+(n+1,1)$ or $SL_{n+1}(\mathbb{R})$. Each $w \in \Tilde{\mathscr{W}}$ has a stabilizer $G_w$, which is a $1$-parameter subgroup in $G$. Infinitesimally then, we can consider the Lie subalgebra $\mathfrak{g}_w$, which is tangent to $G_w$. Since $\mathfrak{g}_w$ is also $1$-dimensional, we establish a choice of basis vector $v_w \in \mathfrak{g}_w$ for our convenience. More precisely, each of the $G_w = \left\{ \text{exp}(tv_w)\big|\  t \in \mathbb{R}, v_i \in \mathfrak{g}_{i}\right\}$ is a $1$-parameter subgroup of $G$, with parameter $t$. Then, we can say that $\frac{d}{dt} \text{exp}(tv_i)\Big|_{t=0}$ is an infinitesimal isometry we call $v_w$, which spans $\mathfrak{g}_w$.

When $G=SO^+(n+1,1)$, a wall $w$ lies in a totally geodesic copy of $\H{n-1}$, and thus $G_w \cong SO(2)$. Then $\mathfrak{g}_w\cong\mathfrak{so(2)}$. In an appropriate basis, we can normalize so that we can express $SO(2)$ in the following speed-$2\pi$ parametrization:
\[
G_w = 
\left\{
\left(
    \begin{NiceArray}{ccccc}
     \cos(t) & -\sin(t)  &    & &  \\
        \sin(t) & \cos(t) &  &  &      \\ 
        &&1 \Block[borders={top,left, tikz=dashed}]{3-3}{} && \\
          & &     & \ddots & \\
         &  &       &  & 1
    \end{NiceArray}
\right)
\ 
\middle\vert
\
t \in \mathbb{R}
\right\}
\]
Then, 
\[
v_w = \partial_tG_w\big|_{t=0}=
\left(
    \begin{NiceArray}{ccccc}
     0 & -1  &    & &  \\
        1 & 0 &  &  &      \\ 
        && \Block[borders={top,left, tikz=dashed}]{3-3}<\Large>{0} && \\
          & &     & \phantom{0000} & \\
         &  &       &  & 
    \end{NiceArray}
\right)
\]

We do a similar normalization when $G=SL_{n+1}(\mathbb{R})$. In this case, the stabilizer of a wall $G_w$ is a $1$-dimensional $\mathbb{R}$-Lie group of the form (in the appropriate basis): 
\[ 
G_w =
\left\{
\left(
    \begin{NiceArray}{cccc}
     e^{-nt} &   &    &   \\
             & e^{t}\Block[borders={top,left, tikz=dashed}]{3-3}{} &  &      \\
           &     & \ddots & \\
           &       &  & e^{t}
    \end{NiceArray}
 \right)
 \ 
\middle\vert
\
t \in \mathbb{R}
 \right\}
\]
which yields
\[ 
v_w = \partial_tG_w\big|_{t=0}=
\left(
    \begin{NiceArray}{cccc}
    -n &   &    &   \\
             & 1\Block[borders={top,left, tikz=dashed}]{3-3}{} &  &      \\
           &     & \ddots & \\
           &       &  & 1
    \end{NiceArray}
\right)
\]

For a choice of $G$, each of the $G_w$ and $\mathfrak{g}_w$ are canonically isomorphic. We have chosen a basis for the stabilizer of each $w\in\mathscr{W}$, and see that the isomorphism is induced by the appropriate (and unique) change of basis between these $G_w$. 

Finally, we make an arbitrary assignment of transverse orientation for each $w\in\mathscr{W}$. We define that if $v_w$ is the basis of the infinitesimal stabilizer $\mathfrak{g}_w$ for $w \in \pm \mathscr{W}$, then $-v_w$ is the basis of the infinitesimal stabilizer for $-w$.

With these conventions established, we describe the subspace of $\Psi(\pm\Tilde{\mathscr{W}}, V)$ that is supported by our branched bending complex $\Delta$, the image of the map $E$, which we henceforth denote $\Psi_\Delta$. To do this, we make use of a set $S(\pm\Tilde{\mathscr{W}}, V)$ which describes maps $\varsigma: \pm\Tilde{\mathscr{W}} \to V$ which only satisfy $\psi$-consistency conditions $1$ and $2$. We will describe the $\Psi_\Delta \subseteq \Psi(\pm\Tilde{\mathscr{W}}, V)$ as subset of $S(\pm\Tilde{\mathscr{W}}, V)$ via two maps: a map $F: \mathbb{R}^{|\mathscr{W|}} \to S(\pm\Tilde{\mathscr{W}}, V)$ and a map $H: S(\pm\Tilde{\mathscr{W}},V) \to \mathbb{R}^{A|\mathscr{B}|}$ so that $\Psi_\Delta = \text{im}(F)\cap\text{ker}(H) \subseteq \Psi(\pm\Tilde{\mathscr{W}}, V) \subseteq S(\pm\Tilde{\mathscr{W}}, V) $, where $A$ is a constant to be defined shortly. 

With this in mind, we can define the map
\[
% F: \mathbb{R}^{|\mathscr{W|}} \to \Psi(\pm\Tilde{\mathscr{W}}, V)
F: \mathbb{R}^{|\mathscr{W|}} \to S(\pm\Tilde{\mathscr{W}}, V)
\]
We can think of $F$ as an assignment of $\mathbb{R}$-valued ``weights'' which we call \emph{momenta} to each wall $w \in \mathscr{W}$. We construct a map $\varsigma: \pm\Tilde{\mathscr{W}} \to V$ from this data as follows. First, we say that for each $w \in \Tilde{\mathscr{W}}$, $\varsigma(w)\in \mathfrak{g}_w$. If $\omega_w\in\mathbb{R}$ is the value assigned by $F$ to $w \in \mathscr{W}$, then we establish that $\varsigma(\tilde{w})=\omega_w\cdot v_{\tilde{w}}$ for each lift $\tilde{w}$ of $w$. Next, following the definition of $\psi$, we observe that $\varsigma$ is $\rho$-equivariant. Finally, since $v_w$ respects a transverse orientation on $w$, we have the consistency condition $\varsigma(-w)=-\varsigma(w)$. 

We are now interested in the subset of Im($F$) containing maps that satisfy the consistency condition that the total motion around any vertex is zero; we will construct a map
\[ H: 
S(\pm\Tilde{\mathscr{W}},V) \to \mathbb{R}^{A|\mathscr{B}|}
\]
so that ker$(H)$ is the subset of $S(\pm\Tilde{\mathscr{W}},V)$ satisfying this consistency condition, for $A$ a constant to be defined later. To do this, consider each $b\in\mathscr{B}$ and choose an associated distinguished lift $\Tilde{b}\in\Tilde{\mathscr{B}}$. We consider the collection of walls $\{ w_i\} \in \Tilde{\mathscr{W}}$ meeting at $\Tilde{b}$. We consider an oriented loop around $\Tilde{b}$ assign the appropriate transverse orientation to each of the $\{w_i\}$, now in $\pm \Tilde{\mathscr{W}}$. 

Each such arrangement of a binding $\Tilde{b}$ and its incident collection of (oriented) walls $\{w_i\} \in \pm \Tilde{\mathscr{W}}$, gives rise to the condition
\[ \sum \varsigma(w_i)=0\]
A priori, this equation gives rise to dim($V$) many equations; however, we will show later that this can actually be reduced to $A$ equations, where $A <$ dim(Stab$_G(\H{n-2}))$). (For further discussion of the constant $A$, see the Guiding Principle in \ref{con:guide}.) %Thus, we have that ker$(H)$ is precisely the set described previously. 

We define the set $\Psi_\Delta =$ im$(F) \cap$ ker$(H)$ to be then the set of branched bending $\psi$ maps supported on $\Delta \in M$. Moreover, $\Psi_\Delta$ is 
a $\mathbb{R}$-subspace of $\Psi$ of dimension at least $({|\mathscr{W|}}-{A|\mathscr{B}|})$. By Lemma \ref{lem:faith}, $E(\Psi_\Delta)$ is a $\mathbb{R}$-subspace of $H^1(\Gamma, V)$ of dimension at least $({|\mathscr{W|}}-{A|\mathscr{B}|})$.

\section{Geometric Equations for Branched Bending} \label{sec:prod}
In this section we establish a geometric description of the equations defining the kernel of $H$ and thus, establishing the value of the constant $A$ for a given semisimple Lie group $G$. The framing of this discussion in terms of a ``product of matrices'' problem is inspired by the approach found in \cite{Kapovich2007-aa}.

In the general setting, we consider a binding $b \in \mathscr{B}$, and consider a distinguished lift $\Tilde{b} \in \Tilde{\mathscr{B}}$. As previous, we consider its incident collection of (oriented) walls $\{w_i\} \in \pm \Tilde{\mathscr{W}}$, which gives rise to the condition
\[ \sum \psi(w_i)=0\]
Let $G$ be a Lie group with a fixed collection of $1$-parameter subgroups $G_1, \dots, G_k \subset G$. As previous, $G_{w_i}$ is the stabilizer of $w_i \in \pm \tilde{\mathscr{W}}$. Then we define the map
\[
\text{Prod:} \prod^{k}_{i=1} G_{w_i} \rightarrow G
\]
given by Prod$(g_{w_1}, \dots, g_{w_k}) = g_{w_1} g_{w_2} \cdots g_{w_k}$ 
and see that the image is contained in $Stab_G(\tilde{b})$. We are interested in the ``kernel'' of this map; these are precisely the assignments of bending deformations along the walls $w_i$ that induce a deformation that does not change the monodromy around $b$.

%\subsection{Branched Higher Hyperbolic Bending}
\subsection{Proof of Theorem \ref{thm:bbson1}} \label{sec:hh}
Let $G=SO^+(n+1,1)$ and $G_w\cong SO(2)$ for each $w\in\mathscr{\Tilde{W}}$. The intersection of all $\{ w_i\}$ is $\tilde{b}$. The image of Prod then lands in $Stab_{G}(\H{n-2})\cong SO(3)$; we require that the image is in fact the identity to preserve the monodromy around the binding after deformation. Infinitesimally, this gives rise to an analogous problem: we aim to find the preimage of $0\in V$ under
\[
\text{Sum:} \bigoplus^{k}_{i=1} \mathfrak{g}_{w_i} \rightarrow V
\]
where $\mathfrak{g}_{w_i}$ is defined as previous. 

We choose coordinates on $\H{n+1}$ so that the infinitesimal stabilizer of the copy of $\H{n-2}$ containing the binding $\tilde{b}$ can be expressed as follows: 
\[
\mathrm{stab}_{\mathfrak{so}(n+1,1)}(\H{n-2}) = 
\left\{
    \begin{pNiceMatrix}[margin,columns-width=auto]
     \Block[borders={bottom,right}]{3-3}{\mathfrak{so}(3)} &   & \Block{3-4}<\Large>{0} &  &  &   \\
      &  &  &  &  &  \\
     &  &  &  &  &  \\
     \Block{4-3}<\Large>{0} & \phantom{0} &  & \Block[borders={top,left, tikz=dashed}]{3-3}<\Large>{0} &  &\\
    &   &  &  &  & \\
    &   &  &  &  & \\
    &   &  &  &  & 
    \end{pNiceMatrix}
 %\right)
 \right\}
\]
Since im(Sum) $\subset \mathfrak{so}(3)$, we can, for convenience, restrict our attention to the top-left block. 

\begin{figure}
    \centering
\includegraphics[width=0.2\linewidth]{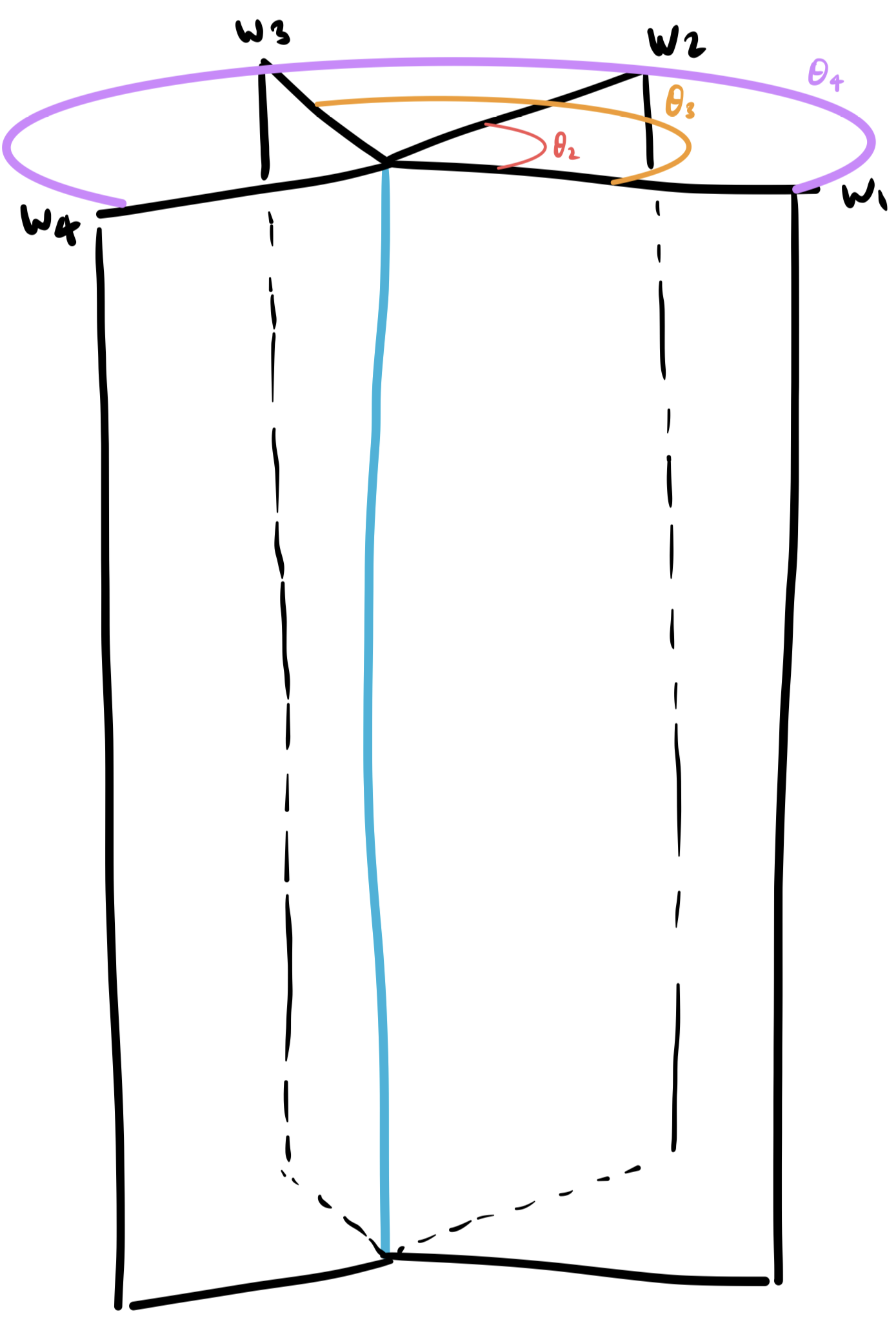}
    \caption{An arrangement of walls $w_i$ meeting around a binding}
    \label{fig:pages}
\end{figure}

Recall the map $F:  \mathbb{R}^{|\mathscr{W|}} \to \Psi(\pm\Tilde{\mathscr{W}},V)$ as defined previously. Elements $\psi \in \Psi(\pm\Tilde{\mathscr{W}},V)$ arise from an assignment of $\mathbb{R}$-valued momenta $\omega_w$ to each wall $w \in \mathscr{W}$. In particular, each $\psi$ assigns the value of $\omega_w \cdot v_w$ to each 
%(positively transversely oriented) 
wall $w \in \pm\mathscr{W}$ and its lifts. However, in our computations for $G=SO(n+1,1)$, the orientation data can be forgotten; for more on this, see Remark \ref{rem:why}. 

We now construct the set of maps of this form that are also in ker($H$); that is, for each $b$ and its incident and transversely oriented $\{ w_i \}$ (around some distinguished lift $\tilde{b}$), we obtain
\[
\sum^k_{i=1} \omega_{w_i}\cdot v_{w_i} = 0
\]

For each such collection of $\{w_i\}$ we can make a convenient calculation. If we choose $w_1$ with stabilizer $\mathfrak{g}_{w_1}$ as the initial wall in our enumeration, then we can describe the appropriate conjugation to the stabilizer $\mathfrak{g}_{w_i}$ of another wall $w_i$ incident to $\tilde{b}$ as follows: 
\[  v_{w_1} = 
\begin{pmatrix}
0&-1&0\\
1& 0 & 0  \\
0& 0 & 0
\end{pmatrix}, \qquad 
 _{R_{\theta_i}}(v_{w_1}) = v_{w_i}
\begin{pmatrix}
0&-\omega  \cos(\theta_i)&\omega  \sin(\theta_i)\\
\omega  \cos(\theta_i)& 0 & 0 \\
-\omega  \sin(\theta_i)&0 & 0
\end{pmatrix}
\]
where $\theta_i$ is the angle formed by $w_1$ and $w_i$ meeting around $\tilde{b}$, and $R_{\theta_i} \in SO(2)$ is the rotation around $\tilde{b}$ by $\theta_i$. From this, we rewrite the above to: 
\[
\omega_{w_1}+ \sum^k_{i=2} \omega_{w_i}\cdot  {R_{\theta_i}} v_{w_1} = 0
\]
%\[ \omega_1 \cdot v_1 +  _{R_{\theta_2}}(\omega_2 \cdot v_1) + \cdots+ _{R_{\theta_k}}(\omega_k \cdot v_1) = 0   \]
where $\theta_2, \dots,\theta_k$ are the angles between the hyperplanes relative to the first indexed hyperplane. There are two distinct entries in these matrices; we see this gives rise to two $\mathbb{R}$-valued equations: 
\begin{equation} \label{eq:eq1}
    \omega_{w_1} + \omega_{w_2}\cos(\theta_2) + \cdots + \omega_{w_k}\cos(\theta_k) = 0 \\
\end{equation}
\begin{equation} \label{eq:eq2}
    \omega_{w_2}\sin(\theta_2) + \cdots + \omega_{w_k}\sin(\theta_k) = 0 \\
\end{equation}
Thus, $A=2$. The
nontrivial assignments of $\omega_{w_i}$ that satisfy these 2 equations are precisely those that land in im$(F)\cap$ker$(H)$. We solve this system once for each $b\in\mathscr{B}$. Thus, since $c_{n-1}=|\mathscr{W}|$, $c_{n-2}=|\mathscr{B}|$, and  $A=2$, we establish:

\begin{theorem}[Theorem \ref{thm:bbson1}] \label{thm:bbson1fin}
Suppose $M$ = $\Gamma \backslash \mathbb{H}^n$ is a complete hyperbolic $n$-manifold containing a branched totally geodesic hypersurface $\Delta$ with branch locus $\mathscr{B}$. Let $c_{n-1}$ be the number of $(n-1)$-dimensional complementary regions in $\Delta\backslash B$, and let $c_{n-2}$ be the number of components of $\mathscr{B}$. Furthermore, let $\Tilde{M}\backslash \Tilde{\Delta}$ be composed of top-dimensional regions $\mathscr{C}$, each with Zariski-dense Fix$_{\Gamma}(\delta)$, $\delta \in \mathscr{C}$. 
Then $dim_{\mathbb{R}} H^{1}(M, \mathbb{R}^{n,1}) \geq c_{n-1} - 2c_{n-2}$. 
\end{theorem}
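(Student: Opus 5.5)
The plan is to assemble the machinery of Sections \ref{sec:book} and \ref{sec:low}, with $V=\mathbb{R}^{n,1}$ and $G=SO^+(n+1,1)$. There are three steps: identify the branched bending maps as a solution space of a linear system, bound that space from below by counting equations, and then apply Lemma \ref{lem:faith}.

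\textbf{Step 1: a linear system on momenta.} The map $F:\mathbb{R}^{|\mathscr{W}|}\to S(\pm\Tilde{\mathscr{W}},V)$ is linear. Its image automatically satisfies consistency conditions (1) and (2), since $\varsigma(\tilde w)=\omega_w v_{\tilde w}$ is $\rho$-equivariant and $v_{-w}=-v_w$. So $\Psi_\Delta=\mathrm{im}(F)\cap\ker(H)$ is cut out of $\mathbb{R}^{|\mathscr{W}|}$, the space of momenta $(\omega_w)$, by the binding conditions alone.

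\textbf{Step 2: the binding conditions.} Fix a binding $b$ and a distinguished lift $\tilde b$. I would first check that imposing the loop condition at $\tilde b$ suffices for every lift $\gamma\tilde b$. This holds by equivariance: the loop sum around $\gamma\tilde b$ is $\rho(\gamma)$ applied to the sum around $\tilde b$.

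Next I would argue that each $v_{w_i}$, for $w_i$ incident to $\tilde b$, lies in the $\mathfrak{so}(3)$ block $\mathrm{stab}(\H{n-2})$. Conjugating by the rotation $R_{\theta_i}$ about $\tilde b$ shows that the loop sum lies in the 2-dimensional span of the two off-diagonal entries displayed above. Its vanishing is therefore exactly equations \eqref{eq:eq1} and \eqref{eq:eq2}. The $\mathfrak{so}(2)$ entry rotating within the $(2,3)$-plane never appears, so $A=2$.

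Consequently $\ker(H)\cap\mathrm{im}(F)$ is the solution set of $2|\mathscr{B}|$ homogeneous linear equations in $|\mathscr{W}|$ unknowns. Its dimension is at least $|\mathscr{W}|-2|\mathscr{B}|=c_{n-1}-2c_{n-2}$, using $c_{n-1}=|\mathscr{W}|$ and $c_{n-2}=|\mathscr{B}|$.

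\textbf{Step 3: injectivity.} I would check that $F$ is injective, since a nonzero weight gives a nonzero $v_w$. Then I would invoke Lemma \ref{lem:faith}. $SO^+(n,1)$ acts faithfully on $\mathbb{R}^{n,1}$, the standard representation, and each $\mathrm{Fix}_\Gamma(\delta)$ is Zariski dense by hypothesis. Hence $E$ restricted to $\Psi_\Delta$ is injective into $H^1(\Gamma,\mathbb{R}^{n,1})$, which gives the bound.

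\textbf{Main obstacle.} The main difficulty is the well-definedness underlying $E$: condition (3) at the bindings must imply that $\varphi$ is path independent. This requires that any two paths of cells in $\tilde M$ with the same endpoints differ by a sequence of elementary moves across a single binding. I would argue this by transversality. $\tilde M$ is simply connected, so a homotopy between two paths can be perturbed to meet $\tilde\Delta$ generically. Such a homotopy crosses only codimension-2 strata, namely bindings, and meets no deeper strata. Each crossing changes the sum by a loop around one binding, which is zero.

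A second subtlety is that $\mathrm{im}(\mathrm{Sum})$ genuinely lies in $\mathfrak{so}(3)$, not merely in $V$. This holds because each $\mathfrak{g}_{w_i}$ fixes $\H{n-2}\supset\tilde b$ pointwise, so the loop condition imposes no further equations.
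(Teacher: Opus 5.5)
Your proposal is correct and follows the paper's own argument. You realize the branched bending maps as $\Psi_\Delta=\mathrm{im}(F)\cap\ker(H)$, use the $\mathfrak{so}(3)$-block and $R_{\theta_i}$ computation to get only the two equations \eqref{eq:eq1}--\eqref{eq:eq2} per binding, and conclude with the injectivity of $E$ from Lemma \ref{lem:faith}. You also make explicit points the paper leaves implicit: injectivity of $F$, reduction to one lift per binding by equivariance, and path-independence of $\varphi$ by transversality. One small precision: what Lemma \ref{lem:faith} really needs is that $\mathbb{R}^{n,1}$ has no nonzero $SO^+(n,1)$-invariant vectors, rather than faithfulness, and this does hold here.
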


We observe the following special case. 
\begin{lem}
	Consider a branched bending system where every bending locus is of degree $n$, and where the angle between two neighboring prongs is $\frac{2\pi}{n}$. Then a solution to this branched bending system is given by all $\omega_i$ equal. 
\end{lem}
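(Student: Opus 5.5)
The plan is to check directly that the two binding equations \eqref{eq:eq1} and \eqref{eq:eq2} hold at every binding once all momenta are equal. Since $\ker(H)$ is cut out one binding at a time, it is enough to work at a single binding $b$. Fix its distinguished lift $\tilde b$ and the incident walls $w_1,\dots,w_k$, enumerated cyclically in the order met by an oriented loop around $\tilde b$; here $k=n$ is the degree. By hypothesis, neighboring prongs meet at angle $\frac{2\pi}{n}$, so relative to $w_1$ the angles are $\theta_i = \frac{2\pi(i-1)}{n}$ for $i=1,\dots,n$, with $\theta_1=0$.

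Set $\omega_{w_i}=\omega$ for all $i$. Then the left-hand sides of \eqref{eq:eq1} and \eqref{eq:eq2} become
\[
\omega\sum_{j=0}^{n-1}\cos\Big(\tfrac{2\pi j}{n}\Big), \qquad \omega\sum_{j=0}^{n-1}\sin\Big(\tfrac{2\pi j}{n}\Big).
\]
These are $\omega$ times the real and imaginary parts of $\sum_{j=0}^{n-1}\zeta^j$, where $\zeta=e^{2\pi i/n}$. For $n\geq 2$ we have $\zeta\neq 1$, so this sum equals $\frac{\zeta^n-1}{\zeta-1}=0$, and both equations hold. Put differently, $\sum_i R_{\theta_i}v_{w_1}$ is the average of $v_{w_1}$ over the cyclic group of order $n$ acting on the $2$-dimensional part of $\mathfrak{so}(3)$ transverse to the rotation axis. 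That part carries a nontrivial rotation representation, so the average vanishes.

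The step needing care, and the main potential obstacle, is the bookkeeping of transverse orientations. Going around the loop, each wall $w_i$ is crossed once, and the sign of $v_{w_i}$ depends on whether the crossing agrees with its chosen orientation; the claim that the stabilizers are exactly the rotates $R_{\theta_i}v_{w_1}$ with a uniform sign needs justification. I would orient all walls consistently with the loop, say counterclockwise, so that each crossing contributes $+\omega\,R_{\theta_i}v_{w_1}$. I would then appeal to Remark \ref{rem:why}, which says the orientation data can be forgotten for $G=SO^+(n+1,1)$: flipping a wall's orientation replaces $v_w$ by $-v_w$, which is the same as rotating by $\pi$ in the normal plane. With this convention the uniform-momentum solution is consistent.

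Finally, the same momentum $\omega$ must be used at every binding, and this is compatible because $\omega_w$ is assigned per wall of $\mathscr{W}$. The equal assignment therefore lies in $\mathrm{im}(F)\cap\ker(H)=\Psi_\Delta$. It is nonzero whenever $\omega\neq 0$, and by Lemma \ref{lem:faith} it gives a nontrivial class in $H^1(\Gamma,\mathbb{R}^{n,1})$.
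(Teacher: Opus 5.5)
Your proof is correct and is essentially the paper's own argument: you substitute equal momenta and the angles $\theta_i = 2\pi(i-1)/n$ into Equations \eqref{eq:eq1} and \eqref{eq:eq2}, and both reduce to the vanishing of the sum of the $n$-th roots of unity. Your handling of transverse orientations agrees with the paper's convention, since $\psi(-w)=-\psi(w)$ and $v_{-w}=-v_w$ make $\omega_w$ independent of orientation; your closing claim of a nontrivial class via Lemma \ref{lem:faith} goes beyond the statement and would also need the Zariski-density hypothesis on the cells.
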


\begin{proof} \label{lem:even}
	By the equations defined above, we see that for an $n$-valent branched bending locus, we are verifying solutions to the following equations when all $w_i$ are equal and $\theta = 2\pi/n$: 
\[ 
	\sum_{k=0}^{n-1} \cos(k\theta)   =  0, \qquad
    \sum_{k=0}^{n-1} \sin(k\theta)   =  0
\]
This reduces to checking that the sum of $n^{th}$-roots of unity is zero, which is a classical result.
\end{proof}
 
\subsection{Proof of Theorem \ref{thm:bbpgln}}\label{sec:rp}
Here we find analogues to Equations \ref{eq:eq1} and \ref{eq:eq2}, but in the setting of projective deformations. See Section \ref{prelim} for more discussion on projective bending. 

We work in the same setting as before: we lift a triple $M$, $\mathscr{W}$,  and $\mathscr{B}$ to their respective universal covers $\Tilde{M}$, $\Tilde{\mathscr{W}}$, and $\Tilde{\mathscr{B}}$; for a binding $b \in \mathscr{B}$, consider a chosen lift $\Tilde{b} \in \Tilde{\mathscr{B}}$, and look at all of the $w_i \in \Tilde{\mathscr{W}}$ incident to $\Tilde{b}$. 

As we are interested in projective deformations, recall that $\Tilde{M}=\H{n}\subset \mathbb{RP}^n$. Each lift of a wall $w_i$ is contained in a copy of $\H{n-1}$; we call this collection $\{ \H{n-1}_i \}$, and note that $\Tilde{b}$ lifts to a distinguished copy of $\H{n-2}$ such that $\cap_i \H{n-1}_i = \H{n-2}$. Each of the copies of $\H{n-1}$ has a $1$-dimensional family of \emph{bulging} deformations. 

In a bulging deformation, a totally geodesic hyperplane $\H{n-1} \subset\H{n} \subset \mathbb{RP}^{n}$ has an orthogonal complement  $(\H{n-1})^{\perp} \subset \mathbb{RP}^n$ that determines the direction of the deformation. For each of the $\H{n-1}_{i}$, there is a corresponding $(\H{n-1}_{i})^{\perp}$. Moreover, as $\H{n-2} \subseteq \cap_i \H{n-1}_{i}$, each $(\H{n-1}_{i})^{\perp}$ lies in $(\H{n-2})^{\perp}$, as seen in Figure \ref{fig:projbulg}. 

\begin{figure}
    \centering
    \includegraphics[width=0.5\linewidth]{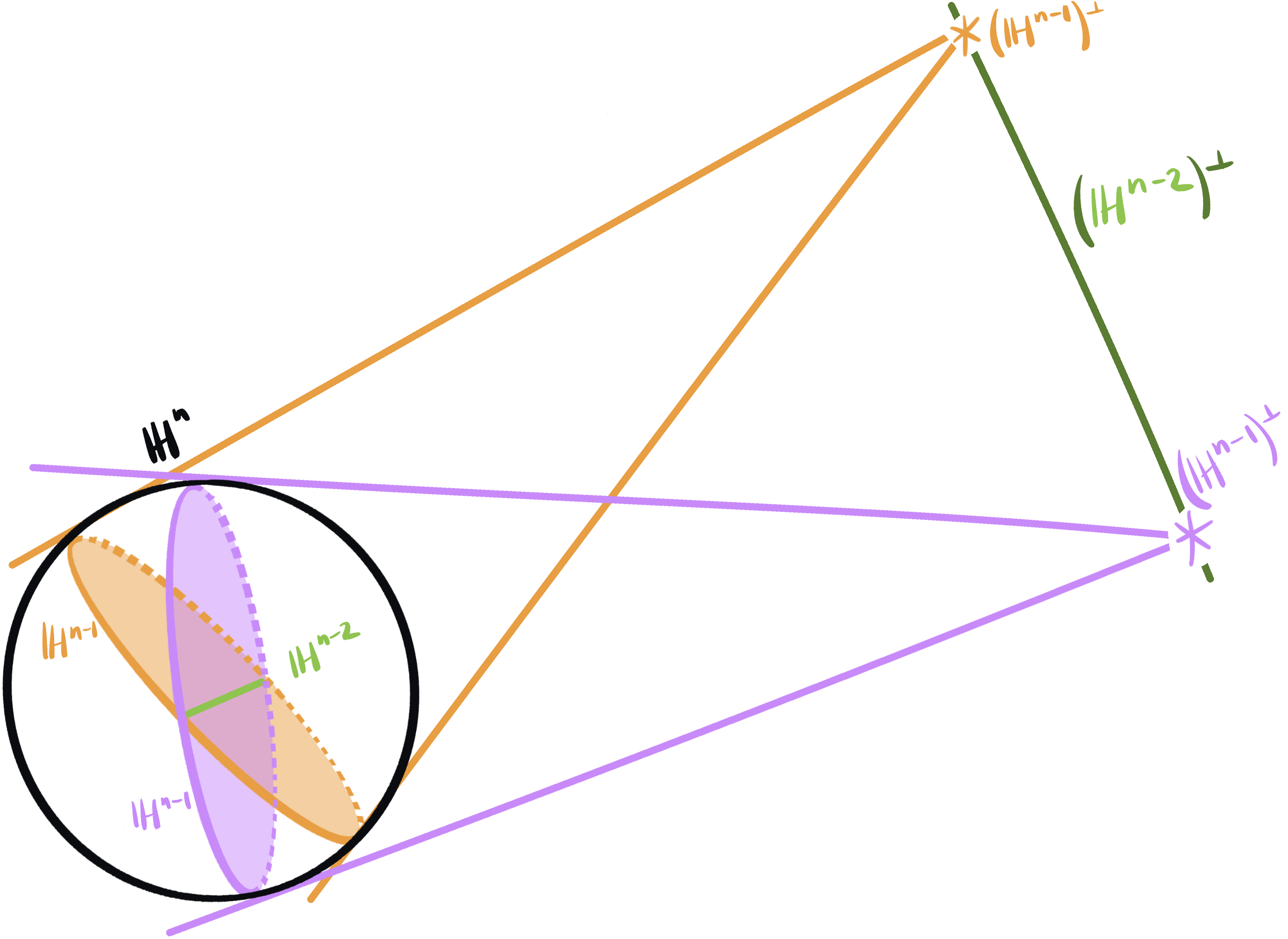}
    \caption{Two codimension-1 hyperbolic spaces meeting along a codimension-2 hyperbolic space, and their respective orthogonal complements in $\mathbb{RP}^n$}
    \label{fig:projbulg}
\end{figure}

In analogy to the construction in Section \ref{sec:hh}, the product of bulging deformations supported along each of the $\H{n-1}_{i}$ must be contained in the subgroup of Isom$(\mathbb{RP}^{n})$ preserving $(\H{n-2})^{\perp}$. We work now in a convenient basis to to explore this subgroup. 

We pick a copy $\H{n-1}$ to do bulging along, with coordinates chosen so that 
\[ 
Stab_{Isom(\mathbb{RP}^n)}(\H{n-1}) =
\left\{
\left(
    \begin{NiceArray}{ccccc}
     e^{-nt} &   &    & &  \\
     & e^{t}\Block[borders={top,left, tikz=dotted}]{4-4}{} & & & \\
            & & e^{t}\Block[borders={top,left, tikz=dashed}]{3-3}{} &  &\\
          & &     & \ddots & \\
         &  &       &  & e^{t}
    \end{NiceArray}
 \right)
 \ 
\middle\vert
\
t \in \mathbb{R}
 \right\}
 \]
Here, the dashed $(n-1) \times (n-1)$ block still corresponds to acting on $\H{n-2}$. The larger $n \times n$-block acts on our chosen copy of $\H{n-1}$ by something projectively equivalent to the identity; and the top-leftmost entry corresponds to $(\H{n-1})^{\perp}$. In these coordinates, a change in choice of $\H{n-1}$ still fixes the bottom-right dashed $(n-1) \times (n-1)$ block; only the top-left $2 \times 2$ block can change. So we see then that in the case of projective bending, im(Prod) $\subset GL(2)$. 

Again, looking to the infinitesimal version of this setting, we see that im(Sum) $\subset \mathfrak{gl}(2)$. In particular, we choose coordinates on $\mathbb{RP}^n$ so that
\[
\text{im(Sum)} \subset
\left\{
    \begin{pNiceMatrix}[margin,columns-width=auto]
     \Block[borders={bottom,right}]{2-2}{\mathfrak{gl}(2)} &   & \Block{2-4}<\Large>{0} &  &  &   \\
            &  &  &  &  &  \\
           \Block{4-2}<\Large>{0} & \phantom{00}  & \Block[borders={top,left, tikz=dashed}]{4-4}{\ast I} &  &  &\\
           &   &  &  &  & \\
           &   &  &  &  & \\
           &   &  &  &  & 
    \end{pNiceMatrix}
    \middle\vert \ 
    \text{trace} =0
 \right\}
\]
We now restrict our attention to the top-left block for the following discussion. This means that if we write 
\[ v_w =\begin{pmatrix}
-n &&&\\
&1&&\\
&&\ddots&\\
&&&1
\end{pmatrix}\]
then restricted to the top-left block we obtain
\[ 
v_w = 
\begin{pmatrix}
    -n &0\\
    0 &1
\end{pmatrix}\]
where $n$ is the dimension of our manifold. 

As previous, we aim to construct the set of maps in im$(F)\cap$ ker($H$); that is, for each $b$ and its incident and transversely oriented $\{ w_i \}$ (around some distinguished lift $\tilde{b}$), we obtain
\[
\sum^k_{i=1} \omega_{w_i}\cdot v_{w_i} = 0
\]

For each such collection of $\{w_i\}$ we make a similarly convenient calculation. In this setting, we emphasize that the transverse orientation of each wall $w_i$ is taken into account in the calculations; the $\omega_{w_i}$ are computed with sign. If we choose $w_1$ with stabilizer $\mathfrak{g}_{w_1}$ as the initial wall in our enumeration, then we can describe the appropriate conjugation to the stabilizer $\mathfrak{g}_{w_i}$ of another wall $w_i$ incident to $\tilde{b}$ as follows. 
Let $\alpha = -\dfrac{1}{2}n + \dfrac{1}{2}$, $\beta = \dfrac{1}{2}n + \dfrac{1}{2}$, and $n$ be the dimension as it has been throughout; then 
\[ v_1 = 
\begin{pmatrix}
 - n & 0  \\
0 & 1
\end{pmatrix}, \qquad 
 _{R_{\theta}}( v_1) = 
\begin{pmatrix}
(\alpha-\beta\cos(2\theta)) & \beta \sin(2\theta)\\
\beta \sin(2\theta) & (\alpha+\beta\cos(2\theta))
\end{pmatrix}
\]
where $\theta_i$ is the angle formed by $w_1$ and $w_i$ meeting around $\tilde{b}$, and $R_{\theta_i} \in SO(2)$ is the rotation around $\tilde{b}$ by $\theta_i$. From this, we rewrite the above to: 
\[
\omega_{w_1}+ \sum^k_{i=2} \omega_{w_i}\cdot  {R_{\theta_i}} v_{1} = 0
\]
%\[ \omega_1 \cdot v_1 +  _{R_{\theta_2}}(\omega_2 \cdot v_1) + \cdots+ _{R_{\theta_k}}(\omega_k \cdot v_1) = 0   \]
where $\theta_2, \dots,\theta_k$ are the angles between the hyperplanes relative to the `first indexed hyperplane. There are three distinct entries in these matrices; this gives rise to \emph{three} $\mathbb{R}$-valued equations: 
\begin{equation} \label{peq1}
    \omega_{w_1} + \left( \sum^k_{i=2} \omega_{w_i}\left(\alpha+ \beta\cos(2\theta_i)\right) \right) =0
\end{equation}
\begin{equation} \label{peq2}
    -\omega_{w_1}n + \left( \sum^k_{i=2} \omega_{w_1}\left(\alpha- \beta \cos(2\theta_i)\right) \right) = 0
\end{equation}
\begin{equation} \label{peq3}
     \beta\left(\sum^k_{i=2} \omega_{w_1}\left(\sin(2\theta_i)\right) \right)=0 
\end{equation}
The nontrivial assignments of $\omega_{w_i}$ that satisfy these three equations are precisely those that land in im$(F)\cap$ker$(H)$. We solve this system once for each $b\in\mathscr{B}$. Thus, since $c_{n-1}=|\mathscr{W}|$, $c_{n-2}=|\mathscr{B}|$, and  $A=3$, we obtain:

\begin{theorem}[Theorem \ref{thm:bbpgln}] \label{thm:bbpglnfin}
Let $M$, $\Delta$, and $\mathscr{B}$, be as in Theorem \ref{thm:bbson1}.
Then $dim_{\mathbb{R}} H^{1}(M, \nu_{n+1}) \geq c_{(n-1)} - 3c_{(n-2)}$.
\end{theorem}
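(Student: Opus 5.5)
The plan is to run the same linear-algebra argument used for Theorem \ref{thm:bbson1fin}, now with $G=SL_{n+1}(\mathbb{R})$ and $V=\nu_{n+1}$, and to verify that each binding imposes exactly three scalar constraints. First, I would check that Lemma \ref{lem:faith} applies. The group $SO^+(n,1)$ acts faithfully on $\nu_{n+1}$, since the adjoint action on this irreducible complement of $\son{n}$ in $\slr{n+1}$ has trivial kernel up to the center, and the center is trivial here. The hypotheses of Theorem \ref{thm:bbson1fin} give that each $\mathrm{Fix}_\Gamma(\delta)$ is Zariski dense. Together these show that $E:\Psi(\pm\Tilde{\mathscr{W}},\nu_{n+1})\to H^1(\Gamma,\nu_{n+1})$ is injective. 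It therefore suffices to prove $\dim \Psi_\Delta \geq c_{(n-1)}-3c_{(n-2)}$, where $\Psi_\Delta=\mathrm{im}(F)\cap\ker(H)$.

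Next I would show that $F:\mathbb{R}^{|\mathscr{W}|}\to S(\pm\Tilde{\mathscr{W}},\nu_{n+1})$ is injective. Each $v_w$ is nonzero, and it lies in $\nu_{n+1}$ because it is the traceless diagonal bulging generator, which is orthogonal (with respect to the Killing form) to $\son{n}$. Hence $\dim\mathrm{im}(F)=|\mathscr{W}|=c_{(n-1)}$. I also need $\varsigma=F(\omega)$ to satisfy consistency conditions (1) and (2). Condition (1) follows because $\rho(\gamma)$ conjugates $\mathfrak{g}_w$ to $\mathfrak{g}_{\gamma w}$ and carries $v_w$ to $v_{\gamma w}$. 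Condition (2) follows from the convention $v_{-w}=-v_w$.

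The key step is to bound the codimension of $\ker(H)\cap\mathrm{im}(F)$ by $3|\mathscr{B}|$. Consistency condition (3) only needs to be checked for one lift $\Tilde{b}$ of each $b\in\mathscr{B}$, because equivariance propagates it to every other lift. At $\Tilde{b}$, each incident $v_{w_i}$ lies in the Lie algebra of the stabilizer of the projective subspace spanned by $\H{n-2}$ and its dual point configuration. In the adapted coordinates it has the block form
\[ v_{w_i}=\begin{pmatrix} X_i & 0\\ 0 & I\end{pmatrix}, \]
with $X_i\in\mathfrak{gl}(2)$ symmetric, obtained as $R_{\theta_i}$-conjugates of $\mathrm{diag}(-n,1)$. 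The condition $\sum_i\omega_{w_i}v_{w_i}=0$ has two parts:
\begin{itemize}
\item The lower-right block gives $\sum_i\omega_{w_i}=0$.
\item The upper-left $2\times 2$ symmetric block gives at most three scalar equations, namely the two diagonal entries and one off-diagonal entry; these are \eqref{peq1}--\eqref{peq3}.
\end{itemize}
The lower-right equation is implied by the trace of the upper-left block, because the total trace vanishes. So each binding contributes at most $A=3$ linear equations on $\mathbb{R}^{|\mathscr{W}|}$. Rank–nullity then gives $\dim\Psi_\Delta\geq c_{(n-1)}-3c_{(n-2)}$, and applying $E$ finishes the proof.

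I expect the main obstacle to be the block-reduction step. One has to show that all the $v_{w_i}$ around a binding really lie in this common $\mathfrak{gl}(2)\oplus\mathbb{R}I$ subalgebra, so that no further equations arise from the other entries in $\nu_{n+1}$. I would handle this by choosing coordinates in which $\H{n-2}$ is the span of the last $n-1$ basis vectors, with the normal 2-plane $(\H{n-2})^{\perp}$ spanned by the first two. Every $\H{n-1}_i$ containing $\H{n-2}$ then corresponds to a line in this 2-plane, and the rotations $R_{\theta_i}$ act only on the first two coordinates. This makes the symmetric $2\times2$ form of the conjugated generators explicit, and also records the transverse-orientation signs in the $\omega_{w_i}$.
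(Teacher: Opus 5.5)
Your proposal is correct and follows the same route as the paper. Both arguments get injectivity of $E$ from Lemma \ref{lem:faith}, reduce each binding condition $\sum_i \omega_{w_i} v_{w_i}=0$ to the symmetric $2\times 2$ block on $(\H{n-2})^{\perp}$ (the scalar block being forced by the trace) to get $A=3$ equations per binding, and conclude by rank--nullity. Your explicit checks that $F$ is injective, that $v_w\in\nu_{n+1}$, and that one lift per binding suffices are details the paper leaves implicit. What the lemma actually needs is that $\nu_{n+1}$ has no nonzero $SO^+(n,1)$-invariant vectors rather than faithfulness, and this holds because $\nu_{n+1}$ is a nontrivial irreducible representation.
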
 

\begin{figure}
    \centering
    \includegraphics[width=0.5\linewidth]{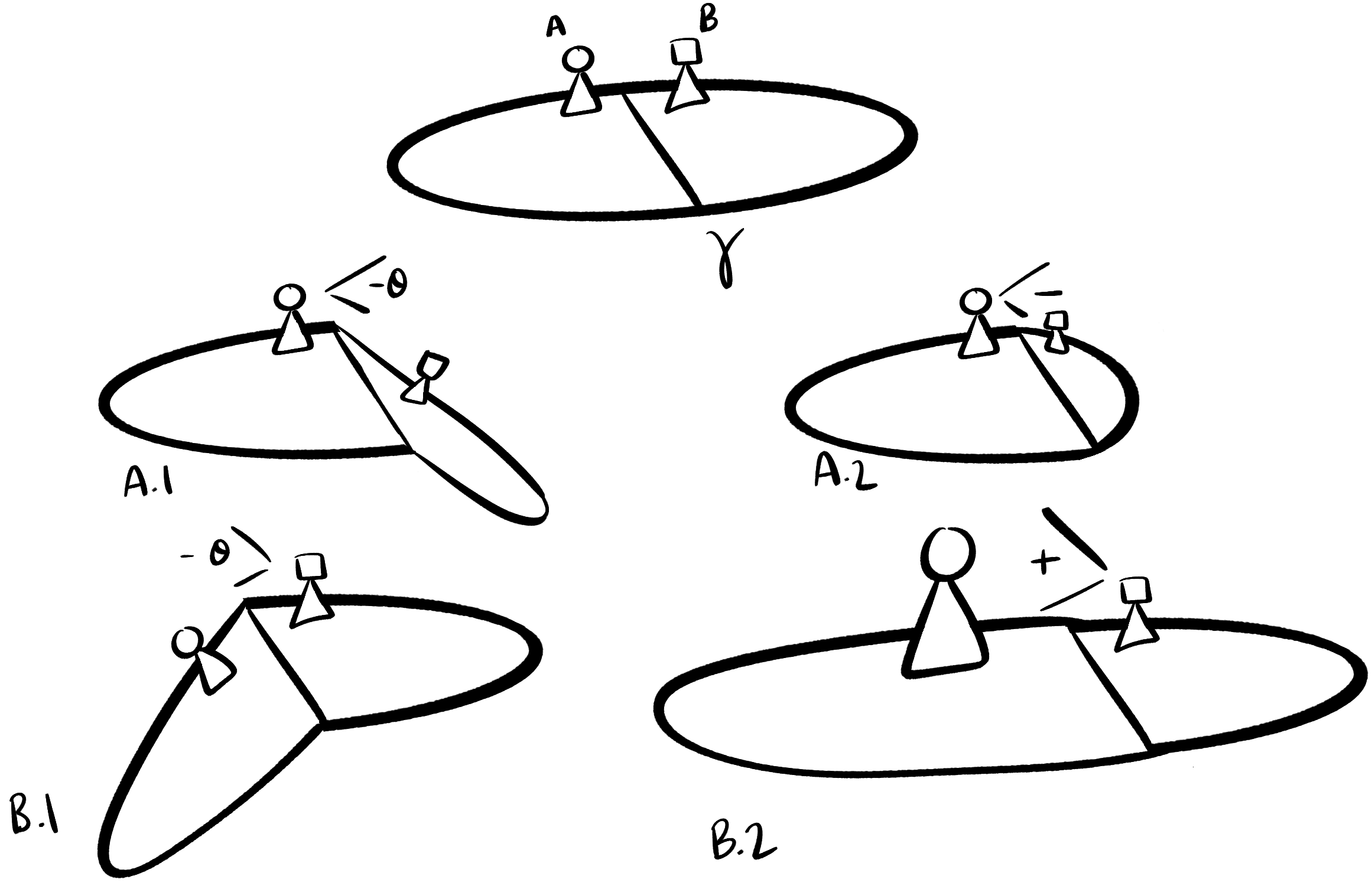}
    \caption{A schematic of bending and bulging as experienced by two observers}
    \label{rem:whyfig}
\end{figure}
 
\begin{remark*} \label{rem:why}
    One major discrepancy in the classical bending setting versus the projective bulging setting is the necessity of a \emph{transverse orientation} on the totally geodesic hypersurface. This is to account for an asymmetry that appears upon performing these deformations, as illustrated in Figure \ref{rem:whyfig}. In the classical bending case, two observers facing each other across the bending locus will ``see'' the same thing after the bending is performed; it will appear to either observer that the other observer has shifted away or towards them by the same angle $\theta$. However, in the case of projective bulging, the observers will ``see'' different phenomena, depending on which side of the bulging locus they are on. One observer will see the other dilated by a factor of $e^t$, while the other will witness a contraction by a factor of $e^{-t}$. To account for which side is being dilated or contracted, we assign a transverse orientation to the bulging locus. 
\end{remark*}

\subsection{Broader Application}

With the above arguments in mind, one may propose the following: 
\begin{guiding} \label{con:guide}
Let $M$, $\Delta$, and $\mathscr{B}$, be as in Theorems \ref{thm:bbson1fin}, \ref{thm:bbpglnfin} and let $G$ be a semisimple Lie group with Lie algebra $\mathfrak{g}$ such that $SO^+(n,1) \subset G$. Then 
\[dim_{\mathbb{R}} H^{1}(M, \mathfrak{g}_{Ad_{\rho}}) \geq c_{n-1} - Ac_{n-2}\]
where $A$ is a constant informed by the dimension of $Stab_{G}(\H{n-2})$.
\end{guiding}

In the case of \cite{Bart2006-im}, $n=3$, $G=SO^+(4,1)$ and $A=2$. We note that when $G=SO^+(n+1,1)$, then $A=2$ as it is in the case proved in Theorem \ref{thm:bbson1}; when $G=SL(n+1)$, then $A=3$ as it is in the case proved in Theorem \ref{thm:bbpgln}. 

We emphasize that $A$ is \emph{not} the dimension of $Stab_{G}(\H{n-2})$; in the $Ad$-invariant splitting of $\gad = \son{n} \oplus \nu$, where $\nu$ is the orthogonal complement of $\son{n}$ in $\gad$ with respect to the Killing form, $A$ can be less than even the dimension of the subspace of $\gad$ tangent to $Stab_{G}(\H{n-2})$, as seen in the case when $G=SL_{n+1}(\mathbb{R})$. 

In the general setting, the constant $A$ is the dimension of the space of all possible bending deformations into the chosen group $G$, along any $\H{n-1}$ containing a fixed copy of $\H{n-2}$. This is the space being described in the computations of ker$(H)$ in Sections \ref{sec:hh} and \ref{sec:rp}.

\section{The Deformation Space of the Borromean Rings} \label{sec:defbor}

So far, we have developed general machinery to describe how branched and intersecting hypersurfaces in finite-volume hyperbolic manifolds of dimension $\geq3$ are able to support deformations into certain geometries. In contrast to the broader setting of those results, in this section we focus our attention to branched bending in the setting of cusped hyperbolic $3$-manifolds. As an example, we analyze the Borromean rings complement, a thrice-cusped finite-volume hyperbolic $3$-manifold that is known not to admit any closed embedded totally geodesic hypersurfaces \cite{menasco1992totally}. 

\subsection{Cuspidal Cohomology} 

For this section, we let $\Gamma$ be a lattice in $SO^+(3,1)$. We are interested especially in the case where $M=\Gamma\backslash\H{3}$, $M$ a hyperbolic 3-manifold with cusps, and where $\rho: \Gamma \to SO^+(3,1)$ is a discrete and faithful representation. Throughout $G$ is a semisimple Lie group such that $SO(3,1) \subseteq G$.

The space of infinitesimal deformations of the representation $\rho$ into the Lie group $G$ with Lie algebra $\mathfrak{g}$ is identified $H^{1}(\Gamma, \mathfrak{g}_{Ad_{\rho}})$. We will define \emph{cuspidal cohomology} as the subspace of these infinitesimal deformations of $\rho$ that do not deform the cusps. To make this more precise:
\[ H_{\partial}^{1}(\Gamma, \mathfrak{g}_{Ad_{\rho}}) := \ker \Big(\text{res:}\  H^{1}((\Gamma\backslash\H{3}), \mathfrak{g}_{Ad_{\rho}}) \to H^{1}(\partial(\Gamma\backslash\H{3}), \mathfrak{g}_{Ad_{\rho}}) \Big) \]
This means that if $c$ is a cocycle in $H_{\partial}^{1}(\Gamma, \mathfrak{g}_{Ad_{\rho}})$, the restriction of $c$ to the peripheral subgroups should be trivial; that is, $c(\gamma)$ should act as a coboundary on parabolic elements. 

As discussed in Section \ref{prelim}, when $G=SO^+(4,1)$, we have that $\mathfrak{g}=\mathfrak{so}(4,1)$, and 
$ \mathfrak{so}(4,1)= \mathfrak{so}(3,1)\oplus \mathbb{R}^{3,1}$
which induces the splitting 
\[ H^{1}(\Gamma, \mathfrak{so}(4,1)) = H^{1}(\Gamma, \mathfrak{so}(3,1)) \oplus H^{1}(\Gamma, \mathbb{R}^{3,1})  \]
as well as a similar splitting in cuspidal cohomology. 

When $G=SL_{4}(\mathbb{R})$, we have that $\mathfrak{g}=\mathfrak{sl}_{4}(\mathbb{R})$, and 
$ \mathfrak{sl}_{4}(\mathbb{R}) = \mathfrak{so}(3,1) \oplus \nu_{4}$. Here, $\nu_{4}$ is a $9$-dimensional vector space. This also induces the splitting 
\[ H^{1}(\Gamma, \mathfrak{so}(4,1)) = H^{1}(\Gamma, \mathfrak{so}(3,1)) \oplus H^{1}(\Gamma, \nu_{4})  \]
as well as a similar splitting in cuspidal cohomology. 

We remark briefly here that in \cite{Bart2006-im}, in order to compute cuspidal cohomology, the authors compute the similar (but not a priori same) notion of \emph{parabolic cohomology}. It is a result of \citet{kapovich1994deformations} that these notions agree in the setting where $\Gamma$ is a lattice in $SO^+(3,1)$ and $\mathfrak{g}=\mathfrak{so}(4,1)$.
For a strengthening of this result and further discussion of these ideas, please see Appendix \ref{app:cusp}. We make use of this identification, and write $PH^{1}(\Gamma, \mathfrak{g}_{Ad_{\rho}})$ instead of $H_{\partial}^{1}(\Gamma, \mathfrak{g}_{Ad_{\rho}})$ from here on. 

By Calabi-Weil Infinitesimal Rigidity \cite{calabi1961compact, Weil1960-xi}, $PH^1(\Gamma, \mathfrak{so}(3,1))=0$ when $\Gamma$ is a uniform lattice in $SO^+(3,1)$. Due to Garland-Raghunathan  \cite{Garland1970-dy}, $PH^1(\Gamma, \mathfrak{so}(3,1))=0$ in the case that $\Gamma$ is a non-cocompact lattice in $SO^+(3,1)$. These results imply that to compute $H^{1}(\Gamma, \mathfrak{so}(4,1))$ and $H^{1}(\Gamma, \mathfrak{sl}_4(\mathbb{R})$ (and their cuspidal counterparts) reduces to computing  $H^{1}(\Gamma, \mathbb{R}^{3,1})$ and  $H^{1}(\Gamma, \nu_{4})$ respectively (with the corresponding results in cuspidal cohomology). 

\subsection{The Borromean Rings complement}

\begin{figure}
\centering
\begin{subfigure}{.49\textwidth}
  \centering
  \includegraphics[width=.49\linewidth]{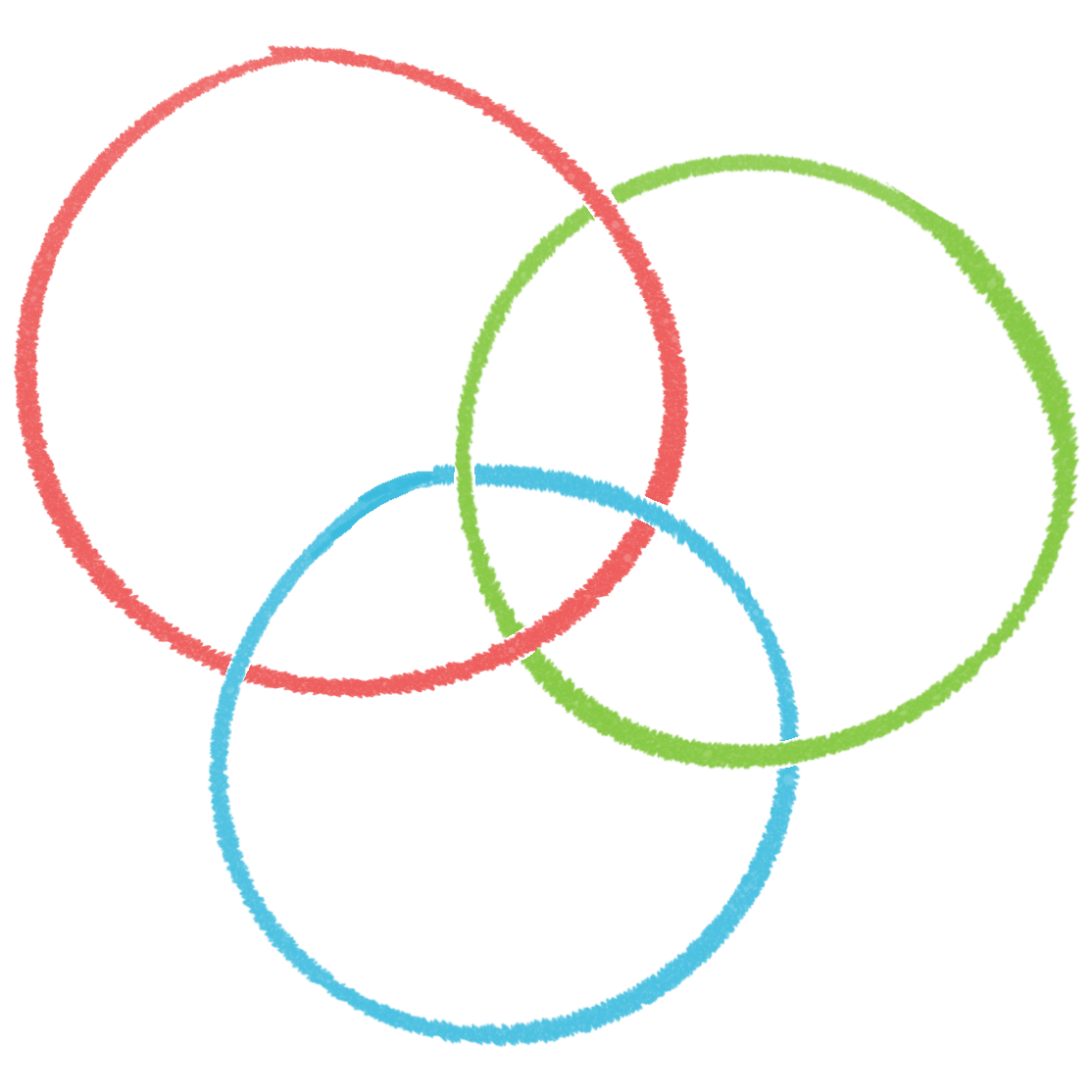}
  \caption{The link diagram}
  \label{fig:borroplain}
\end{subfigure}
\begin{subfigure}{.49\textwidth}
  \centering
  \includegraphics[width=.49\linewidth]{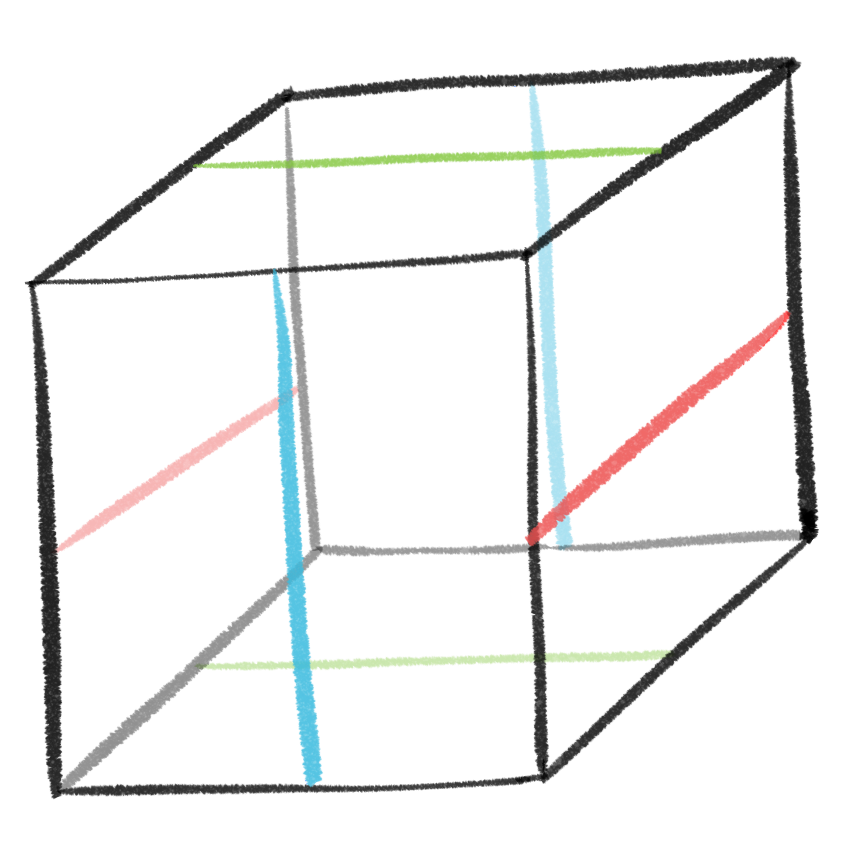}
  \caption{The cube diagram}
  \label{fig:cubeplain}
\end{subfigure}
\caption{Two visualizations of $B$}
\label{fig:borropics}
\end{figure}

\label{sec:borro}
We analyze the cuspidal cohomology of the Borromean rings complement in two different settings. First, with coefficients in $\mathfrak{so}(4,1)$, we show that the Borromean rings complement has trivial cuspidal cohomology. This gives an alternate proof that this link complement does not contain any closed embedded totally geodesic hypersurfaces, as shown in \cite{menasco1992totally}. Then, with coefficients in $\mathfrak{sl}_4(\mathbb{R})$, we show that the cuspidal cohomology associated to infinitesimal $SL_{4}(\mathbb{R})$ deformations of the Borromean rings complement does \emph{not} vanish, as one might expect from the lack of closed totally geodesic surfaces. Instead, we show this manifold contains a non-compact totally geodesic branched complex that supports deformations into higher rank that do not affect the cusps. (In some sense, we can approximate the effects of \emph{closed} totally geodesic surfaces by this system of non-compact ones.)

First, let $B$ denote the Borromean rings as seen in Figure \ref{fig:borroplain}, otherwise known as the link $6^{3}_{2}$ in \citet{rolfsen2003knots}. The link $B$ has three unknotted components and the property that any two components are unlinked. The complement of $B$ in $S^3$ is a thrice-cusped hyperbolic arithmetic manifold that we denote as $\borro$ from here on. Let $\Gamma = \pi_1(\borro)$, which admits the following presentation:  \[ \Gamma = \langle x,y,z\ |\  [x,  [y^{-1}, z]]= [y, [z^{-1}, x]] \rangle \] 
where x, y, and z, are a meridional generator for each of the three cusps in $M$. In particular, they are all parabolic elements.

A link that arises as the closure of a three-braid does not admit any closed embedded totally geodesic surfaces \cite{lozano1985incompressible, menasco1992totally}, and the Borromean rings can be realized as the closure of the three-braid word ($\sigma_{1}$$\sigma^{-1}_{2})^{3}$. However, it is possible to find several non-compact embedded totally geodesic surfaces. Of interest to us are six totally geodesic thrice-punctured spheres that arise as the fixed point set of involutions of $\borro$. These surfaces intersect each other either in right angles or not at all---we elaborate on their properties here. 

\begin{figure}
    \centering
    \includegraphics[width=0.5\linewidth]{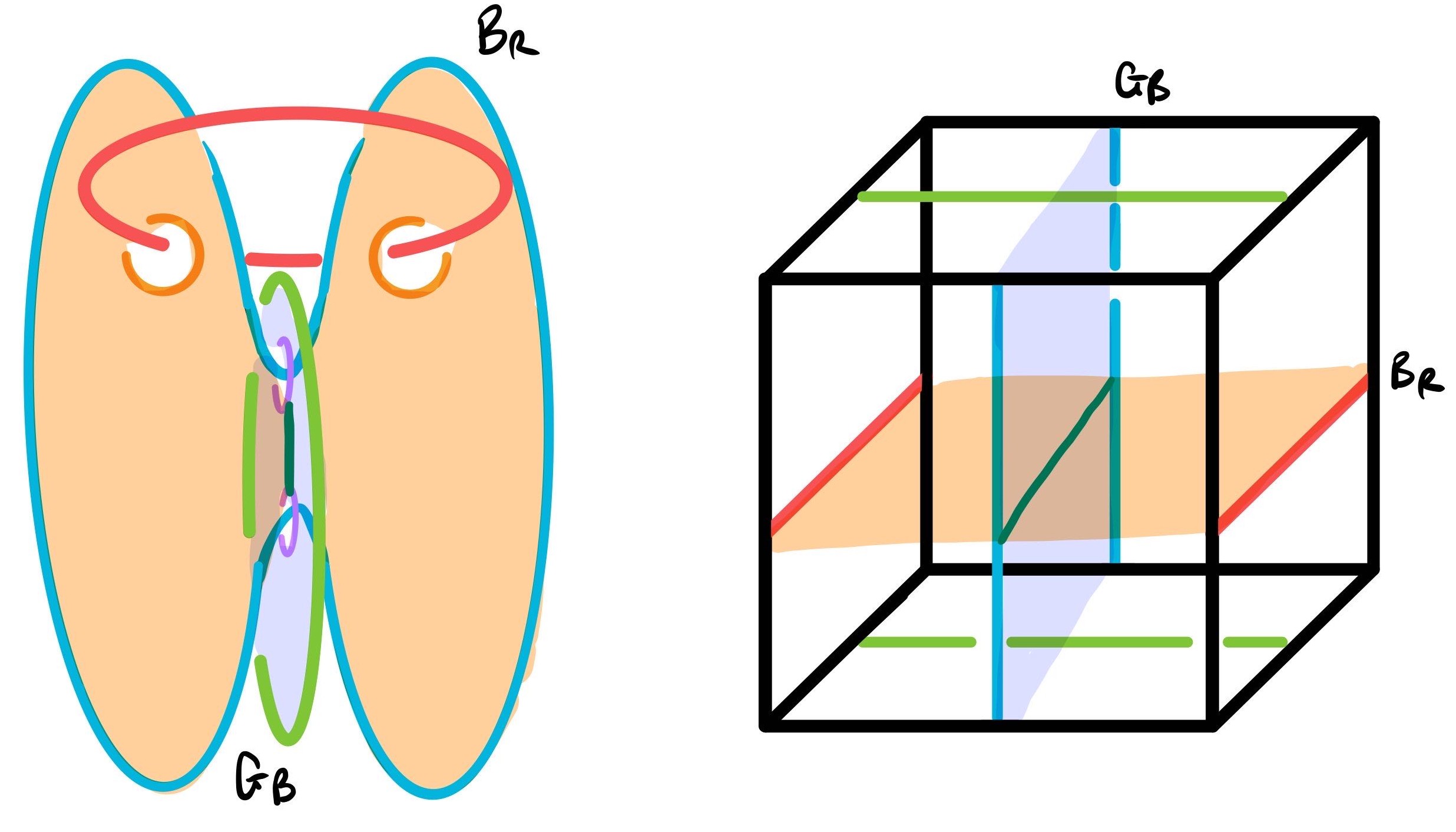}
    \caption{Two intersecting totally geodesic pairs of pants, represented in two models}
    \label{fig:intersection}
\end{figure}

There are two useful models when discussing the presence of these totally geodesic pairs of pants in the Borromean rings complement. In the link diagram, we consider a disk bounded in the plane by one of the components of the link $B$, and two punctures (or the ``cuffs'') arising from considering the passing of one of the other components in the link diagram through that disk. We remark that for each component of the link, there are two such disks (one for each of the remaining components of the link that do not form the waist). Said another way, when considering the diagram in Figure \ref{fig:intersection}, for each component of $B$, there are two disjoint thrice-punctured spheres. In total, we obtain six thrice-punctured spheres. A straightening lemma of \citet{Adams1985-wj} shows that each of these surfaces are isotopic to a totally geodesic thrice-punctured sphere. In Figure \ref{fig:intersection}, we see (after an isotopy of the link diagram) the intersection of two such surfaces. 

\begin{figure}
    \centering
    \includegraphics[width=0.5\linewidth]{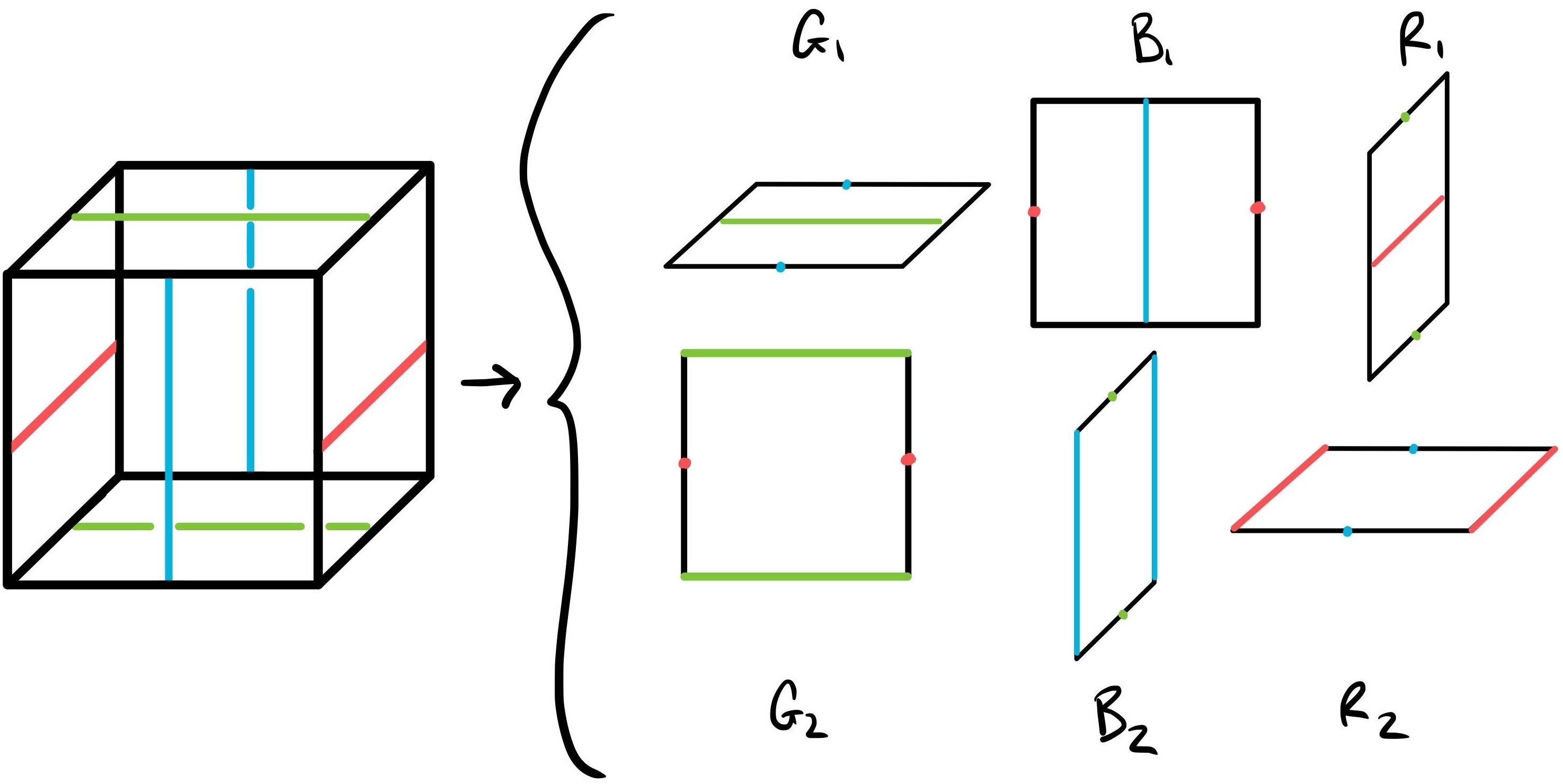}
    \caption{The six totally geodesic pairs of pants in the Borromean rings complement of interest; the top row comes from the faces, the bottom row comes from the midcubes}
    \label{fig:pantcube}
\end{figure}

However, via another perspective, we can visualize $\borro$ more directly. Let $[0,1]^{3}$ be the standard 3-cube, with opposite faces identified to form the standard 3-torus. If the arcs on the faces of the cube, as shown in Figure \ref{fig:cubeplain}, are deleted, then we obtain a manifold that is diffeomorphic to $\borro$ (this is stated in \cite{Hodgson_1986}). Moreover, we can also see the pairs of pants described previously here as the faces and midcubes of this cube, as seen in Figure \ref{fig:pantcube}. As there are involutions of this cube that fix these faces and midcubes, we have another way of seeing that the six pairs of pants described are totally geodesic. In this model, we can also see their incidences; in Figure \ref{fig:intersection}, we can see the same intersection of surfaces as in the link diagram at left also being represented in the cube diagram. Their incidences can be visualized in Figure \ref{fig:incidence}, where every intersection is right-angled. These six embedded totally geodesic pairs of pants form a complex of totally geodesic pieces that will be referred to in whole from here as $\pantsc$. For more on this construction, see \cite{Hodgson_1986}.

There are three cusps, $C_R$, $C_G$, and $C_B$, which correspond to tubular neighborhoods around the red, green, and blue components in the diagram of $B$ respectively. Each cusp $C_{i}$ has a torus cross-section $T_{i}$ such that $C_{i} = T_{i} \times [1, \infty)$, and each $T_{i}$ is a Euclidean 2-torus. When considering a thrice-punctured sphere, we will color the cusp ends based on which cusp in $\borro$ it intersects with for our convenience.

In Figure \ref{fig:incidence}, we see that for each cusp $C_{i}$, there are exactly four totally geodesic thrice-punctured spheres that intersect our chosen cusp. Let $\mathcal{P}_{m,n}$ refer to the pair of pants\footnote{Throughout, the author uses ``pair of pants'' interchangeably with ``thrice-punctured sphere"--a convention that is technically inaccurate. This is done to employ the evocative language of ``waistbands'' and ``cuffs'' when describing critical features of these surfaces. We hope that this does not cause confusion.} with $m$-colored waistband and $n$-colored cuffs. 
Here, the ``waistband'' of a given pair of pants refers to the puncture that meets a cusp distinct from the cusp the other two punctures (the ``cuffs'') meet. If we consider the intersection of $C_{i}$ with one such of the $\mathcal{P}_{m,n}$ at a time, then there are three cases:  $C_{i} \cap \mathcal{P}_{m,n}$ is empty, $C_{i} \cap \mathcal{P}_{m,n}$ is a meridian for $T_{i}$ and so $m=i$ , or $C_{i} \cap \mathcal{P}_{m,n}$ is a pair of disjoint but parallel longitudes for $T_{i}$ and so $n=i$. For a given $C_{i}$, there are two totally geodesic thrice-punctured spheres in each of these sets, corresponding to $m,n \in \{ R,G,B\}\backslash\{i\}$.

\begin{figure}
    \centering
    \includegraphics[width=0.5\linewidth]{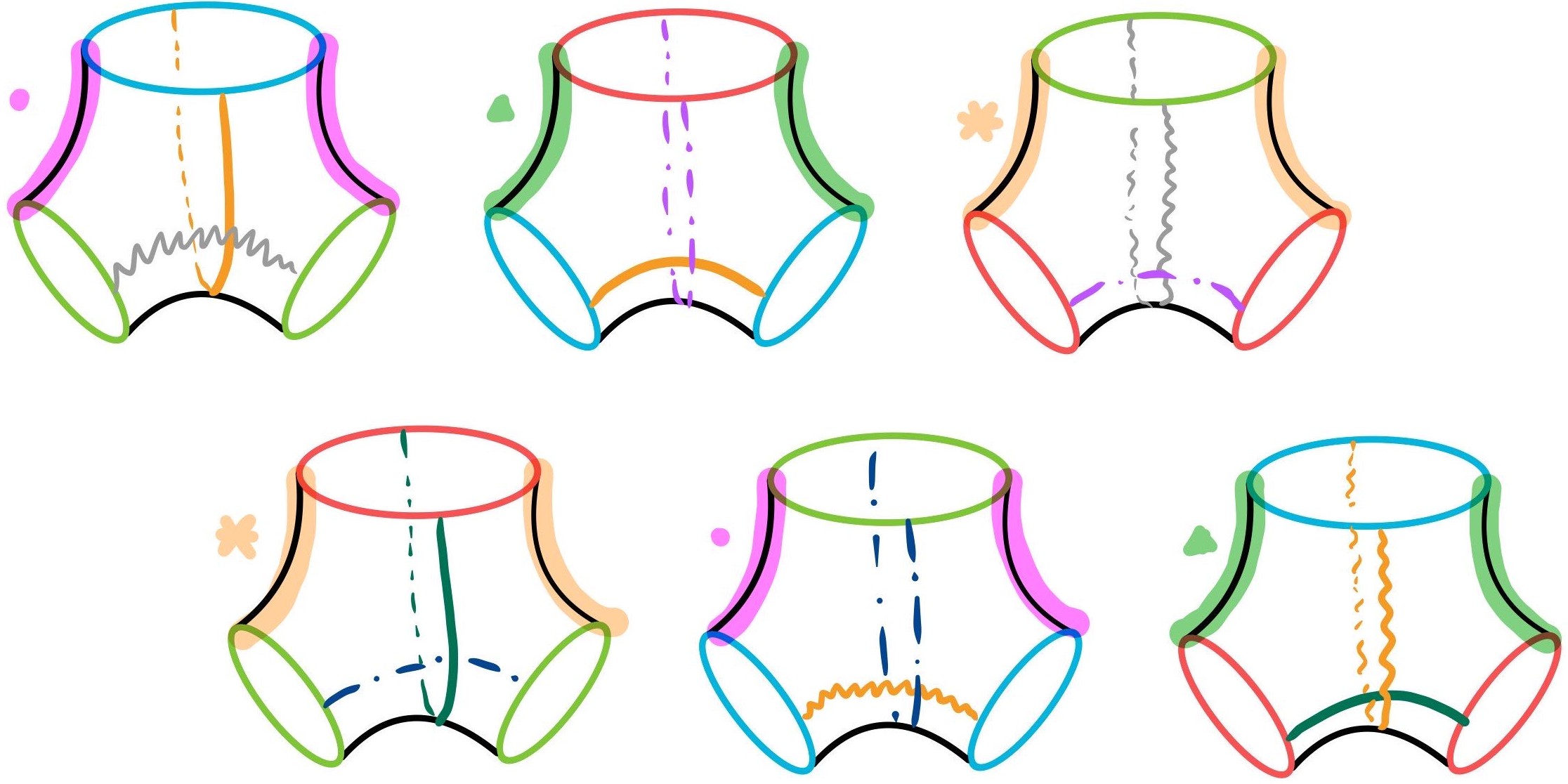}
    \caption{The six totally geodesic thrice punctured spheres, annotated by intersection}
    \label{fig:incidence}
\end{figure}

In the following two sections, we will be computing the dimension of deformation space associated to the complete hyperbolic structure on the Borromean rings complement via Fox Calculus computations.  For background on Fox Calculus, see \cite{Fox1953-yc}, \cite{Goldman2020-ym}; to see the implementation specific to this paper, please see Appendix \ref{app:fox}. To make these computations, we make use of the following representation $\rho$ of $\Gamma$ in $SO^+(3,1)$, adapted from a previous representation into $PSL_{2}(\mathbb{C})$ from \cite{Ucan-Puc2021-ra}:

\begin{equation} \label{eq:so31} 
\rho(x)= \begin{pmatrix}
3 & 0 & 2 & 2 \\
0 & 1 & 0 & 0 \\
-2 & 0 & -1 & -2 \\
2 & 0 & 2 & 1 
\end{pmatrix},
\ 
\rho(y)= \begin{pmatrix}
3 & 2 & 0 & -2 \\
2 & 1 & 0 & -2 \\
0 & 0 & 1 & 0 \\
2 & 2 & 0 & -1
\end{pmatrix},
\ 
\rho(z)= \begin{pmatrix}
3 & -2 & -2 & 0 \\
2 & -1 & -2 & 0 \\
-2 & 2 & 1 & 0 \\
0 & 0 & 0 & 1
\end{pmatrix} \tag{$\star$}
\end{equation}

\subsection{Proof of Theorem \ref{thm:H1H4}}
When considering infinitesimal deformations of the complete hyperbolic structure of the Borromean rings complement into $SO^+(4,1)$, we have the following result. 
\begin{theorem}[Theorem \ref{thm:H1H4}]
    For $\Gamma = \pi_{1}(S^{3} \backslash 6^{3}_{2})$, $$dim_{\mathbb{R}} H^{1}(\Gamma, \mathbb{R}^{3,1}) = 3\  \text{and}\ dim_{\mathbb{R}} PH^{1}(\Gamma, \mathbb{R}^{3,1}) = 0.$$
    This space of infinitesimal deformations is spanned by bending deformations supported along six totally geodesic thrice-punctured spheres. 
\end{theorem}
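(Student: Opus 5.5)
The plan has two halves: an upper bound and a vanishing statement from Fox calculus, and a matching lower bound from the branched bending machinery of Section \ref{sec:low}. For the computational half, we use the one-relator presentation $\Gamma=\langle x,y,z \mid r\rangle$ with $r=[x,[y^{-1},z]][y,[z^{-1},x]]^{-1}$ and the explicit representation $(\star)$. A cocycle $c\in Z^1(\Gamma,\mathbb{R}^{3,1})$ is determined by $(c(x),c(y),c(z))\in(\mathbb{R}^{4})^3=\mathbb{R}^{12}$, subject to the Fox derivative condition
\[\rho\Big(\tfrac{\partial r}{\partial x}\Big)c(x)+\rho\Big(\tfrac{\partial r}{\partial y}\Big)c(y)+\rho\Big(\tfrac{\partial r}{\partial z}\Big)c(z)=0,\]
which is four linear equations. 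The first step is to compute the rank of this $4\times 12$ matrix. Since $H^2$ is controlled by the boundary, I expect rank $4$, so $\dim Z^1=8$. Because $\rho(\Gamma)$ fixes no nonzero vector of $\mathbb{R}^{3,1}$, the coboundaries satisfy $\dim B^1=4$, and hence $\dim H^1=4$? That conflicts with the target $3$, so the rank must come out as $5$-equivalent. I therefore plan simply to carry out the computation and take $\dim Z^1-4$ as the answer, and I expect it to equal $3$. A consistency check is the half-dimensional boundary principle: $H^1(T_i,\mathbb{R}^{3,1})$ is $2$-dimensional for each of the three cusps, so the image of the restriction map has dimension $3$.

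For $PH^1$, we impose the parabolic conditions $c(x)\in\operatorname{im}(I-\rho(x))$, and likewise for $y$ and $z$. Each $\operatorname{im}(I-\rho(\gamma))$ for a unipotent $\gamma$ is $2$-dimensional. This is not yet enough, however: parabolicity must hold for all peripheral elements, so we also impose it on the commuting longitudes $\ell_x,\ell_y,\ell_z$, which are read off from the link diagram as words in $x,y,z$. We then intersect these conditions with $Z^1$ and check that the result is exactly $B^1$, which gives $PH^1=0$. The main obstacle I expect is this bookkeeping: writing down correct longitude words and verifying the rank by computer or hand row reduction. I would first reduce by conjugating the cocycle so that $c(x)=0$, which is allowed because $x$ is parabolic and we may subtract a coboundary, and only then impose the remaining conditions.

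For the lower bound and the spanning statement, we apply the construction of Section \ref{sec:low} to $\pantsc$. The six thrice-punctured spheres meet only at right angles along bindings, and each binding is the intersection of two pants, giving four prongs with angles $0,\pi/2,\pi,3\pi/2$. Equations \eqref{eq:eq1} and \eqref{eq:eq2} then read $\omega_{w_1}-\omega_{w_3}=0$ and $\omega_{w_2}-\omega_{w_4}=0$. This is automatic when the two halves of each page belong to the same pants with a single weight, so every choice of six weights lies in $\operatorname{im}(F)\cap\ker(H)$. Fixed groups of cells are Zariski dense, because each cell is bounded by two distinct non-elementary Fuchsian walls (Remark after Lemma \ref{lem:faith}). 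Lemma \ref{lem:faith} therefore makes $E$ injective, but only modulo the relations coming from trivial $\psi$. The image of $E$ is then computed directly by evaluating the resulting cocycles on $x,y,z$. Pants meeting a cusp in parallel longitudes bend it, whereas the two disjoint pants $\mathcal{P}_{m,n}$ with the same waist color give cohomologous contributions, since together they bound a product region. This should cut the $6$ weights down to $3$ independent classes. We check that these three classes are independent by showing that their restrictions to the three cusps are independent, which matches $\dim H^1=3$ and shows they span.
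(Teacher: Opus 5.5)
\paragraph{Fox calculus.} The contradiction you ran into in the Fox-calculus half comes from the presentation, not from the arithmetic, so that half fails as set up. With the single relator $r=[x,[y^{-1},z]][y,[z^{-1},x]]^{-1}$ the Fox matrix is $4\times 12$. Hence $\dim Z^1\ge 8$ and, since $\dim B^1=4$, $\dim H^1\ge 4$. No computation can output $3$, and ``rank $5$'' is impossible for a matrix with four rows.

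The exterior of a three-component link has $b_1=3$ and $b_2=2$, so every presentation of $\Gamma$ has deficiency at most $b_1-b_2=1$. The displayed relation must therefore be read as the two relators $[x,[y^{-1},z]]=1=[y,[z^{-1},x]]$. One can check directly from $(\star)$ that $\rho(x)$ commutes with $\rho(y^{-1}zyz^{-1})$. Your one-relator group only surjects onto $\Gamma$, and its $H^1$ is at least $4$-dimensional.

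With both relators the Fox matrix is $8\times 12$, and $\dim H^1=3$ is the statement that it has rank $5$. Once that is known, you do not need the longitude bookkeeping for $PH^1$. Proposition \ref{prop:sum} (your half-lives-half-dies count) gives $\dim H^1=\dim\ker\mathrm{res}+3$, so $\ker\mathrm{res}=0$, and Proposition \ref{propA} identifies $\ker\mathrm{res}$ with $PH^1$.

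\paragraph{Geometric half.} This half rests on three false claims.
\begin{enumerate}
\item Lemma \ref{lem:faith} does not apply to the full complex $\mathcal{P}$. Its cells are simply connected, which is exactly why the paper restricts to a four-wall, three-binding subcomplex. If $E$ were injective on your six-dimensional space of per-pants weights, you would get $\dim H^1\ge 6$, contradicting $3$. ``Injective modulo relations'' has no content.
\item $\mathcal{P}_{R,G}$ and $\mathcal{P}_{R,B}$ do not bound a product region. Their cuffs lie in different cusps, and distinct totally geodesic surfaces are never parallel.
\item The cusp met in two parallel curves is \emph{not} bent. Those curves are oppositely oriented, so on that torus the bending cocycle is $0$ on the parallel generator and $n_1-n_2$ on the transverse one. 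Here $n_1,n_2$ are unit normals to two planes through the same parabolic fixed point $p$ containing the same direction. Hence $n_1-n_2\in\mathbb{R}p$, and such a cocycle is a coboundary on $\pi_1(T)\cong\mathbb{Z}^2$; this is the $\mathbb{R}^{3,1}$ analogue of Lemma \ref{lem:use}. Only the waist cusp, met in a single curve, receives a nonzero class in $H^1(\pi_1(T),\mathbb{R}^{3,1})$.
\end{enumerate}

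With these corrected, your final sentence gives a valid proof by a route different from the paper's. Take the ordinary Johnson--Millson bending along one pants of each waist colour. The three classes restrict nontrivially to three different cusps and trivially to the others, so they are independent and span the $3$-dimensional $H^1$. Suitably normalized same-waist pairs agree, since their difference lies in $\ker\mathrm{res}=0$. This route bypasses the branched-bending equations and the Zariski-density hypothesis behind Lemma \ref{lem:faith}, on which the paper's lower bound for its subcomplex depends.
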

\begin{proof}

This proof has two parts: first, a dimension count for the space of infinitesimal deformations; and second, a geometric argument that shows these deformations are supported on a subcomplex of $\pantsc$. For the first part, in order to compute $dim_{\mathbb{R}} PH^{1}(\Gamma, \mathbb{R}^{3,1})$, we make an explicit computation using Fox Calculus.  We include $SO^+(3,1) \hookrightarrow SO^+(4,1)$ reducibly to compute the deformations of $\rho$. The Fox Calculus computation establishes that $H^1(\Gamma, \mathbb{R}^{3,1})$ is $3$-dimensional. To see the code, please refer to Appendix \ref{app:fox}.

The second part makes use of Theorem \ref{thm:bbson1}. As $dim_{\mathbb{R}} H^{1}(\Gamma, \mathbb{R}^{3,1}) = 3$, it suffices to show that there are at least $3$ dimensions of deformations supported on the branched totally geodesic complex $\pantsc$. We do this here. 

\begin{figure}
    \centering
    \includegraphics[width=0.2\linewidth]{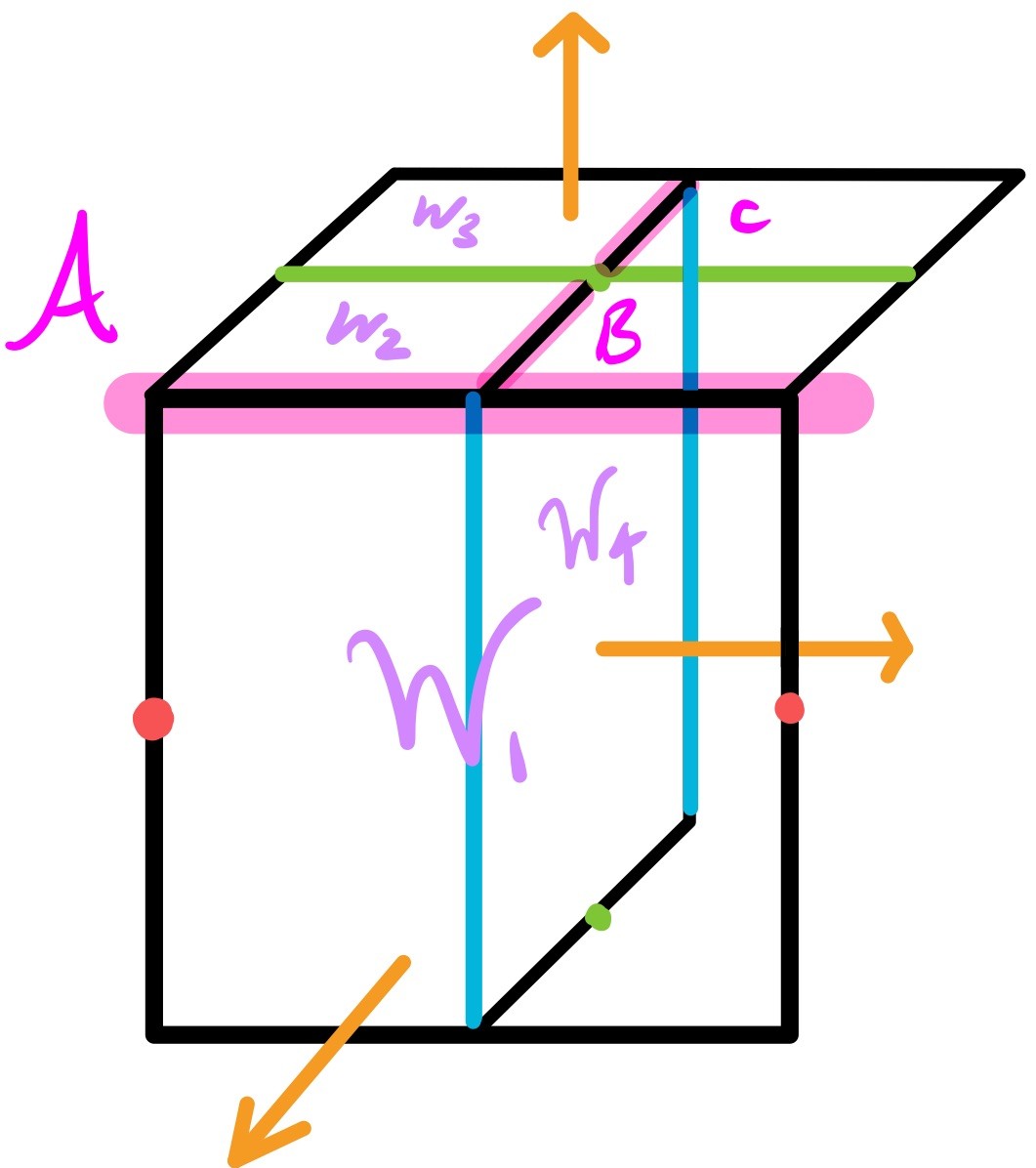}
    \caption{Four walls and three bindings making up a subcomplex of $\pantsc$; in particular, three surfaces of the six}
    \label{fig:so41comp}
\end{figure}

As depicted in Figure \ref{fig:so41comp}, we restrict to a subcomplex of $\pantsc$ for our computation. We have 4 walls, $w_1, w_2, w_3, w_4$ and 3 bindings $A, B, C$. This is because taking all possible walls in the complex $\pantsc$ means the cells $\delta \in \mathscr{C}$ will not have the large Fix$_{\Gamma}(\delta)$ we require in the hypotheses of Theorem \ref{thm:bbson1}; the cells of $M \backslash \pantsc$ are simply connected. However, by restricting to just the three surfaces depicted, we are able to guarantee this condition---each cell is bounded by at least two walls with Zariski-dense Fix$_{\Gamma}(w)$ in $SO^+(n-1,1)$. (For further discussion, see the remark following Lemma \ref{lem:faith}.)

An application of of Theorem \ref{thm:bbson1} is able to show: 
\[ dim_{\mathbb{R}} H^{1}(\Gamma, \mathbb{R}^{3,1}) \geq (4)-2(3) = -2\]
However, with a more detailed analysis of Equations \ref{eq:eq1} and \ref{eq:eq2} described in the proof of Theorem \ref{thm:bbson1} we are able to make a better bound; in particular, by also using the data of the angles between the walls. There are six computations to make: an evaluation of both Equation \ref{eq:eq1} and Equation \ref{eq:eq2} for each of the three bindings, and four variables, one for each wall.

For binding $A$, these are: 
\begin{align*}
    Eq_1(A)&: \ \omega_1 + \omega_3 \cos(\pi/2) + \omega_1 \cos(\pi) + \omega_2 \cos (3\pi/2) = 0 \\
    &\implies  0 = 0 \\
    Eq_2(A)&: \ \omega_3 \sin(\pi/2) + \omega_1 \sin(\pi) + \omega_2 \sin (3\pi/2) = 0 \\
    &\implies  \boxed{\omega_3 = \omega_2}
\end{align*}

For binding $B$, these are: 
\begin{align*}
    Eq_1(B)&: \ \omega_4 + \omega_2 \cos(\pi/2) + \omega_4 \cos(\pi) + \omega_2 \cos (3\pi/2) = 0 \\
    &\implies  0 = 0 \\
    Eq_2(B)&: \ \omega_2 \sin(\pi/2) + \omega_4 \sin(\pi) + \omega_2 \sin (3\pi/2) = 0 \\
    &\implies  \omega_2 = \omega_2
\end{align*}

For binding $C$, these are: 
\begin{align*}
    Eq_1(C)&: \ \omega_4 + \omega_3 \cos(\pi/2) + \omega_4 \cos(\pi) + \omega_3 \cos (3\pi/2) = 0 \\
    &\implies  0 = 0 \\
    Eq_2(C)&: \ \omega_3 \sin(\pi/2) + \omega_4 \sin(\pi) + \omega_3 \sin (3\pi/2) = 0 \\
    &\implies  \omega_3 = \omega_3
\end{align*}

From these calculations, we see that among our four variables, there is only one nontrivial equation, boxed above, and thus, we can now establish:
\[ dim_{\mathbb{R}} H^{1}(\Gamma, \mathbb{R}^{3,1}) \geq 3\]
But as we know $dim_{\mathbb{R}} H^{1}(\Gamma, \mathbb{R}^{3,1}) = 3$ from our Fox Calculus computations, we see $H^{1}(\Gamma, \mathbb{R}^{3,1})$ is realized by deformations supported on the subcomplex of $\pantsc$ shown, as desired. That $dim_{\mathbb{R}} PH^{1}(\Gamma, \mathbb{R}^{3,1}) = 0$ is also a Fox Calculus computation.

\end{proof}

\begin{remark*}
    Note that with just the data of the walls and branched loci, an application of Theorem \ref{thm:bbson1} is only able to show: 
\[ dim_{\mathbb{R}} H^{1}(\Gamma, \mathbb{R}^{3,1}) \geq (4)-2(3) = -2\]
While this is not a useful bound in this particular context, we remark that this bound is not achievable using Theorem 3.1 of \citet{Bart2006-im}; the complex we are considering does not satisfy their hypotheses as the walls here do \emph{not} have Zariski-dense stabilizer. 
\end{remark*}

\subsection{Proof of Theorem \ref{thm:HPG}} 
\label{subsec:sl4}

\begin{prop}\label{thm:H1PG}
    $dim_{\mathbb{R}} H^{1}(\Gamma, \nu_{4}) = 6$. Furthermore, this 6-dimensional space of infinitesimal deformations has a basis realized by bending deformations supported along the six totally geodesic thrice-punctured spheres described earlier. 
\end{prop}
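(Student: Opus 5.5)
The plan mirrors the proof of Theorem \ref{thm:H1H4}: an upper bound from Fox calculus, and a matching lower bound from branched bending via Theorem \ref{thm:bbpglnfin} and the injectivity Lemma \ref{lem:faith}.

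\emph{Upper bound.} Use the representation $(\star)$, composed with the inclusion $SO^+(3,1)\hookrightarrow SL_4(\mathbb{R})$. The adjoint action on $\mathfrak{sl}_4(\mathbb{R})$ preserves the splitting $\mathfrak{so}(3,1)\oplus\nu_4$, and $\nu_4$ is realized as the traceless $\rho$-symmetric matrices. Apply Fox calculus to the one-relator presentation of $\Gamma$ to compute $\dim Z^1(\Gamma,\nu_4)$ as the dimension of the kernel of a $9\times 27$ Jacobian-type matrix. Since $\nu_4^\Gamma=0$, we have $\dim B^1=9$. The expected outcome is $\dim Z^1=15$, hence $\dim H^1(\Gamma,\nu_4)=6$. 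This is a finite exact computation, carried out as in Appendix \ref{app:fox}.

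\emph{Lower bound.} Exhibit six linearly independent classes supported on $\pantsc$. Each thrice-punctured sphere $\mathcal{P}_{m,n}$ is embedded, totally geodesic and two-sided, so projective bending (bulging) along it, as in Section \ref{prelim}, gives a cocycle $\xi_{m,n}$. Equivalently, in the bookkeeping of Section \ref{sec:book}, this is the $\psi$ map with momentum supported on that single surface. Within one surface the binding conditions hold trivially: the two half-walls on either side of an intersection line carry the same momentum, and the same $v_w$ with opposite orientation from the loop's point of view. Since momenta on different surfaces add, any assignment of six momenta lies in $\mathrm{im}(F)\cap\ker(H)$, so the constraints $(\ref{peq1})$--$(\ref{peq3})$ are automatically satisfied. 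A quick check at a right-angled binding confirms this, since $\sin(2\theta_i)=0$ and the $\cos(2\theta_i)$ terms pair off.

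\emph{Main obstacle: independence of the six classes.} As noted in the proof of Theorem \ref{thm:H1H4}, the cells of $M\backslash\pantsc$ are simply connected, so Lemma \ref{lem:faith} does not apply to the whole complex. I would try two routes.
\begin{itemize}
\item[(a)] Restrict to subcomplexes whose cells have Zariski-dense stabilizers, such as the three-surface subcomplex of Figure \ref{fig:so41comp} and its images under the symmetries of the cube. Apply Lemma \ref{lem:faith} to each, and combine the resulting classes.
\item[(b)] Compute directly. Evaluate each $\xi_{m,n}$ on $x,y,z$ and a few further elements, and check that the six vectors in $Z^1$ stay independent modulo $B^1$. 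A convenient test is restriction to the peripheral subgroups: by Figure \ref{fig:incidence}, each $\mathcal{P}_{m,n}$ meets a given cusp in a meridian or in two longitudes, and bulging a torus along these curves yields explicit and distinguishable classes in $H^1$ of the cusp tori.
\end{itemize}
I expect route (b) via cusp restrictions to succeed more easily. If the cusp test only reaches rank 3, consistent with the cuspidal part of Theorem \ref{thm:HPG}, I would supplement it with a direct check modulo coboundaries on the kernel of restriction.

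With six independent classes inside a 6-dimensional space, the classes form a basis, which proves the proposition.
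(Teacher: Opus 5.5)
Your skeleton matches the paper's: Fox calculus gives $\dim H^1(\Gamma,\nu_4)=6$, the six Johnson--Millson (HNN) bendings along the $\mathcal{P}_{m,n}$ are the candidate classes, and independence is checked explicitly. Your observation that single-surface momenta automatically satisfy (\ref{peq1})--(\ref{peq3}) is fine. Two steps, however, do not work as written.

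\textbf{The Fox calculus count.} As stated it is impossible. A $9\times 27$ matrix has rank at most $9$, so its kernel has dimension at least $18$, which would force $\dim H^1(\Gamma,\nu_4)\geq 9$. The culprit is reading the presentation as having one relator. Since $M$ is aspherical with $b_1=3$ and $b_2=2$, every presentation of $\Gamma$ has deficiency at most $b_1-b_2=1$. So on the generators $x,y,z$ you need two relators; read the displayed relation as $[x,[y^{-1},z]]=[y,[z^{-1},x]]=1$. The Fox matrix is then $18\times 27$, and it must have rank $12$ to give $\dim Z^1=15$.

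\textbf{Independence, which is the real gap.} Neither of your routes reaches six.
\begin{itemize}
\item[(a)] Applying Lemma \ref{lem:faith} to separate subcomplexes only shows that the classes supported on each subcomplex are independent among themselves. A linear relation mixing surfaces from different subcomplexes is not excluded. No single subcomplex containing all six surfaces satisfies the Zariski-density hypothesis, since the cells of $M\backslash\mathcal{P}$ are simply connected.
\item[(b)] The cusp test is capped at rank $3$ for structural reasons, not by accident. By Proposition \ref{prop:sum}, the restriction map $H^1(\Gamma,\nu_4)\to\bigoplus_i H^1(\pi_1T_i,\nu_4)$ has image of dimension $\dim H^0(\pi_1(\partial M),\nu_4)=3$.
\end{itemize}
So the whole difficulty lies in your ``supplementary'' step: showing that the three cusp-trivial combinations of bendings are nonzero and independent in $H^1$. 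You give no means of detecting this.

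The paper supplies one. For each $\gamma$, the map $\xi\mapsto \frac{d}{dt}\mathrm{tr}\,\rho_t(\gamma)$ at $t=0$, which equals $\mathrm{tr}(\xi(\gamma)\rho(\gamma))$, is a well-defined linear functional on $H^1$. It kills coboundaries because $\mathrm{tr}(v\rho(\gamma))=\mathrm{tr}(\rho(\gamma)v)$. Evaluating these functionals on the loxodromic words $x^{-1}y$, $xz$, $yz$, $xyz$, $xzy$, $yzx^{-1}$ against the six bendings gives the matrix $\mathcal{F}$, which has nonzero determinant. This certifies the cuspidal and non-cuspidal directions at once.

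Your fallback would also be a valid certificate: compute the six cocycles on $x,y,z$ and check that, together with a basis of $B^1$, they span a $15$-dimensional space. But it has to actually be carried out. As proposed, it is a placeholder for the one step that carries the proof.
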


\begin{proof}

    We include $SO^+(3,1) \hookrightarrow SL_{4}(\mathbb{R})$ to compute the deformations of $\rho$. 
    In order to compute $dim_{\mathbb{R}} H^{1}(\Gamma, \nu_{4})$ and $dim_{\mathbb{R}} PH^{1}(\Gamma, \nu_{4})$, we will again make an explicit computation using Fox Calculus. To see the computations and code, please refer to Appendix \ref{app:fox}. 

   Recall that for any choice of cusp $C_{i}$ in $\borro$, each has a torus cross-section $T_{i}$ such that $C_{i} = T_{i} \times [1, \infty)$, and each $T_{i}$ is Euclidean 2-torus. There are exactly four of our totally geodesic thrice-punctured spheres that intersect our chosen cusp, and each of these four surfaces either intersect this cusp torus in a pair of longitudes or a single meridian.  

   To show that the bending deformations supported on the six thrice-punctured spheres depicted in Figure \ref{fig:pantcube} are linearly independent, it suffices to construct a $6\times6$ matrix of full rank whose entries represent the effects of these deformations on six different curves in the manifold $\borro$.
   
With this data, we compute the bending deformation along each of these totally geodesic non-separating surfaces explicitly. In particular, by using the representation $\rho$ defined in (\ref{eq:so31}), we can compute a 1-parameter centralizer in $SL_4(\mathbb{R})$ for the fundamental group of each of these surfaces. In doing so, we can track how each of these six deformations affects six chosen curves in our manifold. Again, we denote the six totally geodesic thrice-punctured spheres, their fundamental groups, and associated stable letters when used as the separating surface in HNN-extension in Figure \ref{fig:pantstable}.

To be precise, we consider the following words: $x^{-1}y$, $xz$, $yz$, $xyz$, $xzy$, and $yzx^{-1}$. All of these words are loxodromic. We note that as seen in Remark 4.9 of \citet{BaDaLee}, it is not enough to verify that the eigenvalues of these words are changing to first-order. However, showing that the derivative of the traces are nonzero and linearly independent is sufficient. We construct a matrix whose rows correspond to each of the previously mentioned six words in $\pi_1(\borro)$ and whose columns correspond to a deformation supported along one of the six totally geodesic non-separating surfaces. In the $(i,j)$-entry of our matrix, we evaluate the derivative of the trace at the identity for the $i^{th}$ word being bent along the $j^{th}$ surface. We call this matrix $\mathcal{F}$. 

\[ \mathcal{F} = \left(
\begin{array}{cccccc}
 \frac{4}{3} & -\frac{4}{3} & -\frac{4}{3} & -\frac{28}{3} & 0 & 0 \\
 -\frac{28}{3} & -\frac{4}{3} & 0 & 0 & -\frac{4}{3} & \frac{4}{3} \\
 0 & 0 & \frac{4}{3} & -\frac{4}{3} & -\frac{28}{3} & -\frac{4}{3} \\
 -\frac{100}{3} & 20 & 20 & -\frac{100}{3} & -\frac{100}{3} & 20 \\
 -12 & -\frac{4}{3} & -\frac{4}{3} & -12 & -12 & -\frac{4}{3} \\
 \frac{164}{3} & \frac{4}{3} & -\frac{4}{3} & -12 & -12 & -\frac{4}{3} \\
\end{array}
\right) \]

The determinant of $\mathcal{F}$ is non-zero; thus the six bending deformations outlined above are linearly independent as infinitesimal deformations. As the Fox Calculus computation shows this is a $6$-dimensional deformation space, we can see the deformations represented by $\mathcal{F}$ form a basis as claimed. 

\end{proof}

\begin{figure}
\centering
\begin{tabular}{|c|c|c|}
\hline
\textbf{} $\mathcal{P}_{m,n}$ & $\pi_{1}$ & $*_{\alpha}$  \\ \hline
\textbf{} $(R,G)$& $\langle y^{-1}zy, z^{-1} \rangle$ & x  \\ \hline
          $(R,B)$& $\langle y^{-1}, zyz^{-1} \rangle$ & x  \\ \hline
          $(B,R)$& $\langle z^{-1}xz, x^{-1} \rangle$ & y   \\ \hline
          $(B,G$)& $\langle z^{-1}, xzx^{-1} \rangle$ & y   \\ \hline
          $(G,R)$& $\langle x^{-1}, yxy^{-1} \rangle$ & z   \\ \hline
          $(G,B)$& $\langle x^{-1}yx, y^{-1} \rangle$ & z   \\ \hline
\end{tabular}
    \caption{A totally geodesic non-separating surface in $\borro$, its $\pi_1$, and the stable letter of the associated HNN-extension}
    \label{fig:pantstable}
\end{figure}

\subsubsection{A Useful Lemma}

Here, we outline the effects of bending a hyperbolic cusp along an intersecting non-compact surface. This is based on Section 6 of \cite{Bobb2019-ct}; we refer the reader there for more detail. 

We say a submanifold $\Sigma$ in a hyperbolic manifold $M$ with a cusp $C \cong T \times [1, \infty)$ \emph{essentially intersects} $C$ if for any given cusp subneighborhood $C_t = T \times [t, \infty)$, ($C_t \cap \Sigma \neq \emptyset$). We are interested in computing the effects of bending along a pair of such surfaces $\Sigma_1$, $\Sigma_2$ such that they each essentially intersect a chosen cusp $C$, and intersect $T$ a torus cross-section of $C$ in parallel curves.

To be more precise, we say $(\Sigma_i \cap T) = \alpha_i$, a curve in $T$. We have arranged that $\alpha_1$ is parallel to $\alpha_2$ in $T$. In the example that matters for our computations, $\alpha_i$ will be a meridian (and for ease, we will choose the standard meridian). We want to compute the effects of bending along $\alpha_i$. In particular, we show that the effects of bending along a pair of such surfaces can be arranged to ``cancel'' when restricted to the cusp. 

\begin{lem} \label{lem:use}
Let $M$, $\Sigma_i$, $\alpha_i$, $C$, and $T$ be as above, and let a bending deformation into $SL_4(\mathbb{R})$ supported on $\Sigma_i$ be denoted by $\rho^i_t$. 
Then the composition of deformations $\rho^1_t$ and $\rho^2_{-t}$ restricted to $\pi_1(T)$ is induced by a conjugation. In particular, the infinitesimal deformation tangent to this composition of deformations is an element of $PH^1(\Gamma, \nu_4)$. 
\end{lem}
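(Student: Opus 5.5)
The plan is to describe the bending restricted to the cusp group $\pi_1(T)\cong\mathbb{Z}^2$ in a concrete cusp-adapted basis and compare the two bendings directly. First normalize by conjugation in $SO^+(3,1)$ so that the cusp $C$ lifts to a horoball centered at a fixed parabolic point $p\in\partial\H{3}$. Then $\rho(\pi_1(T))$ is a lattice of parabolic translations fixing $p$, generated by a meridian $\mu$ and a longitude $\lambda$, with $\mu$ corresponding to the common direction of the $\alpha_i$. Each lift $\tilde\Sigma_i$ meeting this horoball is a totally geodesic plane with $p$ in its boundary. Its intersection with the horosphere is a Euclidean line parallel to the translation vector of $\mu$. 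Because the $\alpha_i$ are parallel, all these planes are vertical planes, in the upper half-space picture, containing a common direction. They are translates of one another under the longitudinal direction and differ only by a horizontal offset. Their centralizers $c_i(t)\in SL_4(\mathbb{R})$ are conjugates of the diagonal one-parameter group from Section~\ref{prelim} by elements of the parabolic group $N_p$ fixing $p$.

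Next I would compute each $\rho^i_t$ on $\pi_1(T)$ via the amalgam or HNN description. The meridian $\mu$ lies in (a conjugate of) $\pi_1(\Sigma_i)$, so it is unchanged up to global conjugation. The longitude $\lambda$ crosses $\Sigma_i$ once, or a fixed number of times, essentially. So $\rho^i_t(\lambda)=c_i(t)\rho(\lambda)$ after a cusp-wide conjugation, where $c_i(t)$ is the centralizer of the plane $\tilde\Sigma_i$ crossed by $\lambda$ in the horoball. Infinitesimally, the restricted cocycle is therefore
\[
z_i(\mu)=0,\qquad z_i(\lambda)=v_i,
\]
with $v_i$ the bulging generator of $\tilde\Sigma_i$, projected to $\nu_4$.

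The composition $\rho^1_t\circ\rho^2_{-t}$ then has tangent cocycle $z_1-z_2$ on $\pi_1(T)$, with $(z_1-z_2)(\mu)=0$ and $(z_1-z_2)(\lambda)=v_1-v_2$. Here $v_2=\mathrm{Ad}(n)v_1$ for some $n\in N_p$ translating in the $\lambda$-direction. The key step is to exhibit $u\in\nu_4$ with
\[
(I-\mathrm{Ad}_\mu)u=0,\qquad (I-\mathrm{Ad}_\lambda)u=v_1-v_2.
\]
I expect this to be the main obstacle. I would attack it by explicit computation in a basis adapted to $p$: write $\rho(\mu)$ and $\rho(\lambda)$ as unipotent $4\times4$ matrices in $SO(3,1)$, write $v_1$ as the diagonal $\mathrm{diag}(-3,1,1,1)$ conjugated into position, and solve the linear system for $u$ in the $\mathrm{Ad}_\mu$-fixed subspace of $\nu_4$. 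The natural candidate is $u=\frac{d}{ds}\mathrm{Ad}(n_s)v_1$ differentiated along the $\lambda$-translation direction, rescaled so that $\lambda$ carries $\tilde\Sigma_1$ to $\tilde\Sigma_2$; this is why the sign $-t$ is used on $\rho^2$. At the group level the analogous statement is $c_1(t)c_2(t)^{-1}=g_t\rho(\lambda)g_t^{-1}\rho(\lambda)^{-1}$, where $g_t$ commutes with $\rho(\mu)$. I would check this with the explicit matrices, possibly with help from the computation in Section 6 of \cite{Bobb2019-ct}.

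Finally, given such a $u$, the restriction of the cocycle to $\pi_1(T)$ is the coboundary of $u$. At the group level, the restriction of the composed deformation is conjugation by $g_t$, which gives the first claim. For the other cusps, the same argument applies only to those the $\Sigma_i$ meet. In the application, the remaining cusps are hit by pairs in the same parallel configuration, or not at all, so the restriction is trivial there too. The composition is therefore parabolic, i.e.\ lies in $PH^1(\Gamma,\nu_4)$, using the identification of cuspidal and parabolic cohomology cited above.
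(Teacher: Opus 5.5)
\textbf{Comparison with the paper.} At the tangent level your route is sound, and it differs from the paper's. The paper computes $\rho_t(\lambda)$ in the group and observes that this single matrix is conjugate to $\rho_0(\lambda)$. It then leans on the appendix's identification of parabolic and cuspidal cohomology to pass from generator-wise triviality to triviality on $\pi_1(T)$. You instead look for one $u$ that trivializes the restricted cocycle on all of $\pi_1(T)$ at once, which needs only the trivial inclusion $H^1_\partial\subseteq PH^1$. You also rightly flag the other cusps met by the $\Sigma_i$; in the application these are single surfaces meeting a torus in two oppositely cooriented parallel curves, and the same algebra covers them.

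\textbf{Your candidate for $u$ is wrong.} The reduction to $(I-\mathrm{Ad}_\mu)u=0$, $(I-\mathrm{Ad}_\lambda)u=v_1-v_2$ is correct and solvable, but not by your proposed $u$. Work in the appendix's coordinates, $\langle x,y\rangle=-x_0y_3-x_3y_0+x_1y_1+x_2y_2$, with elementary matrices $E_{ij}$ indexed by $0,\dots,3$. Put $Y=E_{01}+E_{13}$, $X=E_{02}+E_{23}$, $\rho(\mu)=\exp(cY)$, and $\rho(\lambda)=\exp(aY+bX)$ with $b>0$. The walls through the cusp point containing the $\mu$-direction are $n_s^\perp$ with $n_s=e_2+se_0$. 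Their bulging generators are
\[
f(s)=I-4\,n_s\langle n_s,\cdot\rangle=e^{s\,\mathrm{ad}_X}f(0)=f(0)+4s(E_{23}-E_{02})+4s^2E_{03}.
\]
Everything involved commutes with $Y$, so on $\ker\mathrm{ad}_Y$ we have $I-\mathrm{Ad}_\lambda=I-e^{b\,\mathrm{ad}_X}$. With $v_1=f(s_1)$, $v_2=f(s_1+\delta)$ and $0<\delta<b$, the solution is
\[
u=\phi(\mathrm{ad}_X)\,v_1,\qquad \phi(z)=\frac{1-e^{\delta z}}{1-e^{bz}}=\frac{\delta}{b}+\frac{\delta(\delta-b)}{2b}\,z+\cdots .
\]
Its leading term is $\frac{\delta}{b}v_1$, not a derivative of $v_1$. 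Indeed $(I-\mathrm{Ad}_\lambda)[X,v_1]=-8bE_{03}$, which cannot match the $(E_{23}-E_{02})$-component $-4\delta$ of $v_1-v_2$. Also, $\lambda$ never carries $\tilde\Sigma_1$ to $\tilde\Sigma_2$. The sign $-t$ is needed for a different reason: modulo $\mathrm{ad}_X(\ker\mathrm{ad}_Y)$, the value $z(\lambda)$ reduces to the signed sum of the weights crossed times $v_1$, and $v_1\notin\mathrm{ad}_X(\ker\mathrm{ad}_Y)$. This is the bent-cusp obstruction.

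\textbf{The genuine gap is the group-level step, which fails when you check it.} With $c_s(t)=\exp(tf(s))$ one gets
\[
\rho_t(\lambda)=c_{s_1}(t)\,c_{s_1+\delta}(-t)\,\rho(\lambda)=\exp(aY)\exp(\alpha_tE_{02}+\beta_tE_{23}+\gamma_tE_{03}),
\]
where $\alpha_t=b+(e^{4t}-1)\delta$ and $\beta_t=b+(e^{-4t}-1)\delta$. The centralizer of $\rho(\mu)$ in $GL_4(\mathbb{R})$ consists of the invertible elements of $\mathrm{span}\{I,E_{22},Y,E_{03},E_{02},E_{23}\}$. Conjugating $bX$ by such an element always gives something of the form $b\kappa^{-1}E_{02}+b\kappa E_{23}+\gamma E_{03}$. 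So any $g_t$ commuting with $\rho(\mu)$ and conjugating $\rho(\lambda)$ to $\rho_t(\lambda)$ would force $\alpha_t\beta_t=b^2$. However,
\[
\alpha_t\beta_t=b^2+(2\cosh 4t-2)\,\delta(b-\delta)\neq b^2\qquad(t\neq0).
\]
The cusp stays standard, but its marked Euclidean shape changes at order $t^2$. The paper's own matrix shows the same discrepancy: the product of its $(1,2)$ and $(2,4)$ entries is $x^2+(e^t+e^{-t}-2)k(x-k)$. Its proof only checks that $\rho_t(\lambda)$ by itself is conjugate to $\rho_0(\lambda)$.

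\textbf{What your argument proves.} With $u$ corrected, your argument establishes the ``in particular'' assertion: the tangent cocycle lies in $PH^1(\Gamma,\nu_4)$. The first assertion, that the composed deformation restricted to $\pi_1(T)$ is induced by a conjugation, holds only to first order in $t$. It cannot be established as you propose.
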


\begin{proof}
    
Since $\alpha_i$ is not separating in $T$, the bending deformation restricted to the cusp will be of HNN-extension type. Let $\pi_1(T)$ be generated by $\mu, \lambda$, corresponding to a meridian and longitudinal generator of $T$ respectively. 

Then, it remains to compute $\rho^i_t(\lambda)$, where $\rho^i_0$ is the initial representation $\rho: \pi_1(M) \hookrightarrow SO^+(3,1)$, $\rho^i_t$ is the $1$-parameter family of deformations obtained by bending along $\Sigma_i$, and $\lambda$ acts as the stable letter for the deformation. 
Let $c^i_t$ be the centralizer of $\alpha_i$ in $SL_4(\mathbb{R})$; then a bending deformation $\rho^i_t$ along $\alpha_i$ deforms $\lambda$ as:
\[ \rho^i_t(\lambda) = c^i_t\cdot\rho_0(\lambda)\]

One important remark: since $\alpha_1$ is parallel to $\alpha_2$, then if $c^1_t$ is the centralizer for $\alpha_1 \in SL_4(\mathbb{R})$, we can conjugate $c^1_t$ to get the centralizer of $\alpha_2$, $c_t^2$. In particular, we can choose a basis where the centralizer of $\alpha_2$ is $\tau c^1_t\tau^{-1}$, where
\[ 
\tau = \begin{pmatrix}
    1 & k & 0 & k^2/2 \\ 
    0 & 1 & 0 & k \\
    0 & 0 & 1 & 0 \\
    0 & 0 & 0 & 1 
\end{pmatrix}\]
and $k$ is a function of the distance between $\alpha_1$ and $\alpha_2$. 

Consider, in some chosen basis, 
\[
\rho_0(\lambda) = 
\begin{pmatrix}
     1 & x& 0 & x^2/2 \\ 
    0 & 1 & 0 & x\\
    0 & 0 & 1 & 0 \\
    0 & 0 & 0 & 1 
\end{pmatrix}
\]
Recall, our goal is to compute the effect of bending along $\Sigma_1$ and $\Sigma_2$ with the same magnitude but opposite orientation. Thus, we compute
\[ 
(\rho^1_t\circ\rho^2_t)(\lambda)  = c_t \cdot 
\tau c_{-t}\tau^{-1} = 
\begin{pmatrix}
    1 & x+(e^{-t}-1)k& 0 & x^2/2 + (e^{-t}-1)kx + (k^2 - e^{-t}k ) \\ 
    0 & 1 & 0 & x+(e^{t}-1)k\\
    0 & 0 & 1 & 0 \\
    0 & 0 & 0 & 1 
\end{pmatrix}
\]
which is in fact conjugate to $\rho_0(\lambda)$. 

Thus, since $\rho_t$ either acts as the identity or by conjugation for all generators of $\pi_1(T)$, and so by Theorem \ref{propA} we have that the infinitesimal deformation tangent to $\rho_t$ is an element of $PH^1(\Gamma, \nu_4)$ as desired. 

\end{proof}

In the proof of Theorem \ref{thm:H1PG}, we work with the explicit 1-parameter centralizer associated to each deformation. To see these centralizers and how they are computed, we refer the interested reader again to Appendix \ref{app:fox}. As a result of these explicit centralizers, we are able to compute the matrix $\mathcal{F}$; we also see that these bending deformations are in fact integrable. Moreover, as the surfaces in our system all either meet at right-angles or not at all, their deformations commute, as discussed in \cite{Bobb2019-ct}.

\begin{prop}\label{thm:PH1PG}
    $dim_{\mathbb{R}} PH^{1}(\Gamma, \nu_{4}) = 3$. Furthermore, this space is spanned by bending deformations supported along the six totally geodesic pants found in the complement of $\borro$. 
\end{prop}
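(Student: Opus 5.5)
The argument has two halves, as in Theorem \ref{thm:H1H4}: an upper bound $dim_{\mathbb{R}} PH^{1}(\Gamma, \nu_{4}) \leq 3$ from Fox Calculus, and a lower bound from three explicit cusp-preserving combinations of the six bendings of Proposition \ref{thm:H1PG}. For the upper bound, I start from the $6$-dimensional space $H^1(\Gamma,\nu_4)$ and impose the parabolic condition on the three meridians $x,y,z$ and on one longitude for each cusp. Concretely, I require $c(\gamma)\in \mathrm{im}(I-\mathrm{Ad}_{\rho(\gamma)})$ for these peripheral generators. By Appendix \ref{app:cusp}, parabolic and cuspidal cohomology agree here. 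This is a finite linear-algebra computation, carried out with the code of Appendix \ref{app:fox}, and I expect it to return exactly $3$; that settles the dimension once the lower bound is in place.

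For the lower bound, I use the cusp incidence structure. For each cusp $C_i$, two of the six pants meet $T_i$ in a meridian (the two $\mathcal{P}_{i,n}$), and the other two meet $T_i$ in pairs of parallel longitudes (the two $\mathcal{P}_{m,i}$). Each of the two $\mathcal{P}_{i,n}$ cuts $T_i$ along a meridian, and these two meridians are parallel. So by Lemma \ref{lem:use}, bending along one with parameter $t$ and the other with $-t$ restricts to $\pi_1(T_i)$ as a conjugation.

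I then look for three combinations $\omega=(\omega_{(R,G)},\dots,\omega_{(G,B)})$ of the six basis bendings whose restriction to every cusp is trivial. The natural guess is one combination for each color $i$: $\omega_{i,n}=1$, $\omega_{i,n'}=-1$, and all other weights zero. This pairs the two pants sharing waistband color $i$ with opposite signs, so Lemma \ref{lem:use} handles cusp $C_i$. I then have to check the other cusps that these two pants touch, namely $C_n$ and $C_{n'}$, where they meet the torus in pairs of longitudes. A pair of parallel longitudes coming from a single pants surface gives a bending along two parallel curves with the same orientation. I expect that, by the transverse-orientation asymmetry in the Remark following Theorem \ref{thm:bbpglnfin}, the two curves contribute with opposite sign. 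If so, the same computation as in Lemma \ref{lem:use} shows the restriction to $\pi_1(T_n)$ is again a conjugation. If this self-cancellation fails, I fall back on the cruder approach: impose the six cusp-triviality conditions, two per cusp coming from the meridional pairs and longitudinal pairs, on the $6$-dimensional span, and solve for a $3$-dimensional kernel.

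Finally, I show the three combinations are linearly independent in $H^1$. They are images under the invertible matrix $\mathcal{F}$ of Proposition \ref{thm:H1PG} of three visibly independent weight vectors, since their supports lie in disjoint pairs of columns. Combined with the upper bound, this gives equality and shows the space is spanned by bendings along the six pants. The main obstacle is the longitude cusps: confirming that a single pants meeting $T_i$ in two parallel longitudes gives a cusp-trivial bending, or at least that the combined constraints cut out exactly three dimensions. I would settle this by computing explicitly with the centralizers from Appendix \ref{app:fox}, checking the derivatives of the peripheral cocycle values against $\mathrm{im}(I-\mathrm{Ad})$.
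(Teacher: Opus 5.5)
Your proposal follows essentially the same route as the paper: the dimension comes from the Fox Calculus computation, the three classes are exactly the paper's $\beta(R),\beta(G),\beta(B)$ obtained by pairing the two pants with the same waistband colour with opposite signs, Lemma~\ref{lem:use} handles the waistband cusp, and independence comes from the invertibility of $\mathcal{F}$ in Proposition~\ref{thm:H1PG}. The one point to fix is your heuristic at the cuff cusps. The cancellation there does not come from the bulging asymmetry of the Remark: two parallel curves with the same transverse orientation would add, not cancel. It comes from the fact, asserted in the paper, that the two cuffs of a single pants are \emph{oppositely} transversely oriented on the torus (the cuff component passes through the waistband component's spanning disk once in each direction, because their linking number is zero), so a transverse loop has algebraic intersection $0$ with them.
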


\begin{proof}
   We continue with use of the notation defined in Theorem \ref{thm:H1PG}.

   There are three pairs of thrice-punctured spheres. In the notation from the table in Figure \ref{fig:pantstable}, they are $\{P_{(R,G)}, P_{(R,B)}\}$, $\{P_{(B,R)}, P_{(B,G)}\}$, and $\{P_{(G,R)}, P_{(G,B)}\}$. The first letter denotes the color of torus cusp component the surface intersects at a meridian. The second denotes which color the torus cusp component the surface intersects in a pair of (parallel, oppositely oriented) longitudes.

   We denote a deformation arising as a bending supported along one of these hypersurfaces, $\beta(P_{i,j})$. We construct three deformations, all independent of each other, that reside in the kernel of the restriction map to the torus boundary components, by taking sums of these pairs. 

   In particular, let $\beta(R) = \beta(P_{(R,G)}) -  \beta(P_{(R,B)})$, $\beta(B) = \beta(P_{(B,R)}) - \beta((P_{(B,G)})$, and $\beta(G) = \beta(P_{(G,R)}) - \beta(P_{(G,B)})$. These three deformations span  $PH^{1}(\Gamma, \nu_{4})$. 
   
   To see they are in $PH^1$, we again appeal to the notion of canceling pairs as in Lemma \ref{lem:use}. In the case of where a pair of pants meets the cusp in a pair of parallel longitudes, the parallel curves are oppositely oriented. Thus, their bending deformations cancel and the cusp experiences no bending by Lemma \ref{lem:use}.

    The deformations of these tori that arise from bending along the meridians %as in the construction in Theorem \ref{thm:H1PG} now 
    now cancel in pairs as well, by a similar argument to that in Lemma \ref{lem:use}. 

   That these deformations have nontrivial pre-image in $H^{1}(\Gamma, \nu_{4})$ and are independent follows from Theorem \ref{thm:H1PG}. This shows that they lie in $\ker res$. To see that they are the whole kernel, we recall that this space is $3$-dimensional, so their independence shows the desired result. 
\end{proof}
Propositions \ref{thm:H1PG} and \ref{thm:PH1PG} combined prove Theorem \ref{thm:HPG}.

\subsubsection{From Code to Cuspidal Cohomology}
We remark that the code documentation in Appendix \ref{app:fox} demonstrates a computational method of calculating the dimensions of the group cohomology we are interested in. We can evaluate our results against the following proposition:
\begin{prop}[\cite{Scannell2002-ve}, Prop 2.3] \label{prop:sum}
Let $M$ be a compact, oriented 3-manifold with fundamental group $\Gamma$, and let $V$ be a vector space of characteristic zero acted upon by $\Gamma$. Assume $\partial M$ consists of tori. Then 
\[ dim \ H^1(\Gamma, V) = dim \ker \text{res} \ + dim\ H^0(\pi_1(\partial M),V) \]
where \text{res} denotes the restriction on first cohomology. 
\end{prop}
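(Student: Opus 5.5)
We aim to prove the formula $\dim H^1(\Gamma,V) = \dim\ker\mathrm{res} + \dim H^0(\pi_1(\partial M),V)$ via the long exact sequence of the pair $(M,\partial M)$ with local coefficients in $V$, together with Poincaré--Lefschetz duality. Since $M$ is compact and aspherical in the intended applications (or, more generally, we may use the cohomology of $M$ with local coefficients, which agrees with $H^1(\Gamma,V)$ in degree one), we identify $H^1(\Gamma,V)=H^1(M;V)$ and $H^1(\pi_1(\partial M),V)=H^1(\partial M;V)=\bigoplus_i H^1(T_i;V)$.

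The key steps, in order, are as follows.

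\textbf{Step 1 (identifying the restriction with the long exact sequence).} Write the segment
\[
H^1(M,\partial M;V)\to H^1(M;V)\xrightarrow{\mathrm{res}} H^1(\partial M;V)\to H^2(M,\partial M;V).
\]
Then $\dim H^1(M;V)=\dim\ker\mathrm{res}+\dim\mathrm{im}\,\mathrm{res}$, so it suffices to show $\dim\mathrm{im}\,\mathrm{res}=\dim H^0(\partial M;V)$.

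\textbf{Step 2 (half-dimensionality of the image).} Use the standard ``half lives, half dies'' argument. Poincaré--Lefschetz duality with twisted coefficients pairs $V$ against $V^*$, giving $H^k(M;V)\cong H_{3-k}(M,\partial M;V)$. Under this duality, the image of $\mathrm{res}:H^1(M;V)\to H^1(\partial M;V)$ is the annihilator of the image of $\mathrm{res}:H^1(M;V^*)\to H^1(\partial M;V^*)$ with respect to the cup-product pairing $H^1(\partial M;V)\times H^1(\partial M;V^*)\to H^2(\partial M;\mathbb{R})=\mathbb{R}^{\#\text{tori}}$. This gives
\[
\dim\mathrm{im}(\mathrm{res}_V)+\dim\mathrm{im}(\mathrm{res}_{V^*})=\dim H^1(\partial M;V).
\]
As stated, the proposition involves only $V$, so we must either assume a nondegenerate invariant form $V\cong V^*$ (true for $\mathbb{R}^{n,1}$ and for $\nu_{n+1}$ via the Killing form), or track both sides and use $\dim H^*(T;V)=\dim H^*(T;V^*)$. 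In the self-dual case we obtain $\dim\mathrm{im}\,\mathrm{res}=\tfrac12\dim H^1(\partial M;V)$.

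\textbf{Step 3 (Euler characteristic of the tori).} For each torus $T$ we have $\chi(T)=0$, hence $\dim H^0(T;V)-\dim H^1(T;V)+\dim H^2(T;V)=0$. Poincaré duality on $T$ gives $H^2(T;V)\cong H^0(T;V^*)^*$, which has the same dimension as $H^0(T;V)$ when $V\cong V^*$. Therefore $\dim H^1(T;V)=2\dim H^0(T;V)$. Summing over the boundary tori and combining with Step 2 yields $\dim\mathrm{im}\,\mathrm{res}=\dim H^0(\partial M;V)$, which completes the proof.

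\textbf{Main obstacle.} The main difficulty is Step 2: the isotropy of the image under the twisted cup pairing and the exactness argument showing it is Lagrangian, with the care needed about $V$ versus $V^*$. We would handle this by proving the general dimension identity for the pair $(V,V^*)$ first, then specializing to the self-dual modules relevant here, as is done in Scannell's original argument.
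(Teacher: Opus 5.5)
The paper gives no proof of this proposition; it only quotes it from Scannell, so there is no in-paper argument to compare against. Your route is the standard ``half lives, half dies'' proof: the long exact sequence of $(M,\partial M)$, Poincar\'e--Lefschetz duality to show the image of $\operatorname{res}$ is half of $H^1(\partial M;V)$, and $\chi(T)=0$ to turn half of $\dim H^1(T;V)$ into $\dim H^0(T;V)$. It is correct provided $V\cong V^*$ as $\Gamma$-modules, for example when $V$ carries a nondegenerate $\Gamma$-invariant bilinear form. This covers both uses in the paper: $\mathbb{R}^{3,1}$ with the Lorentzian form, and $\nu_4$ with the restricted Killing form. The latter is nondegenerate because $\mathfrak{sl}_4(\mathbb{R})=\mathfrak{so}(3,1)\oplus\nu_4$ is a Killing-orthogonal splitting.

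However, self-duality is a necessary hypothesis, not one of two options. Your fallback of tracking $V$ and $V^*$ and using $\dim H^*(T;V)=\dim H^*(T;V^*)$ fails. That equality holds in degree $1$ but not in degree $0$, because $\dim H^0(T;V^*)=\dim H^2(T;V)=\dim V_{\pi_1 T}$, and for two commuting operators the invariants and coinvariants can have different dimensions. In general, duality only yields $\operatorname{rank}\operatorname{res}_V+\operatorname{rank}\operatorname{res}_{V^*}=\dim H^1(\partial M;V)$, which does not determine $\operatorname{rank}\operatorname{res}_V$.

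In fact the proposition as transcribed in the paper, with no form on $V$, is false. Take $M=T^2\times[0,1]$, so $\Gamma=\mathbb{Z}^2$, acting on $V=\mathbb{R}^3$ through $1+N_1$ and $1+N_2$. Here $N_1e_1=N_2e_2=e_3$, and $N_1$, $N_2$ kill the other basis vectors, so $N_1N_2=N_2N_1=0$. Then:
\begin{itemize}
\item $\dim H^0(\Gamma;V)=1$ and $\dim H^2(\Gamma;V)=2$, so $\dim H^1(\Gamma;V)=3$;
\item restriction to the two boundary tori is the diagonal map, so $\ker\operatorname{res}=0$;
\item yet $\dim H^0(\pi_1(\partial M);V)=1+1=2$.
\end{itemize}
So you should state $V\cong V^*$ as a hypothesis of the proposition rather than offer the fallback.

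A smaller point in Step 2: the annihilator must be taken for the scalar pairing $\langle a,b\rangle=\langle a\cup b,[\partial M]\rangle$, where $[\partial M]=\partial[M,\partial M]$ carries the induced boundary orientations. It should not be the $\mathbb{R}^{\#\text{tori}}$-valued componentwise pairing you wrote. Already for $T^2\times[0,1]$ with trivial coefficients, the image $\{(a,a)\}$ of restriction is isotropic only because the two boundary tori receive opposite orientations. Its componentwise annihilator is $0$, not the image itself.
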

In particular, by rearranging, we see that:
\[  dim \ H^1(\Gamma, V) - dim\  PH^1(\Gamma, V) = dim\ H^0(\pi_1(\partial M),V)  \]
When $V = \mathbb{R}^{3,1}$, then $H^0(\pi_1(\partial M),V)$ is known to have 1-dimensional contribution for each cusp \cite{Bart2006-im}, and so is in total $3$-dimensional in our setting. When $V = \nu_4$, then $H^0(\pi_1(\partial M),V)$ is also known to have 1-dimensional contribution for each cusp \cite{Garcia2025-vp}, and so we also obtain a total of $3$ dimensions in this setting. In Theorem \ref{thm:H1H4}, we see the difference in dimension between $H^1$ and $PH^1$ is indeed three; the same holds for Theorem \ref{thm:HPG}.

\subsection{Strictly Convex Projective Structures}
We recall a few facts about convex projective structures here. A \emph{convex projective domain} is an open subset $\Omega \subset \mathbb{RP}^{n}$ which is properly contained in and convex in an affine chart. We say such a set $\Omega$ is \emph{properly convex} if its closure $\bar\Omega$ is also contained in an affine patch. We say a set $\Omega$ is \emph{strictly convex} if $\partial\Omega$ does not contain a nontrivial projective segment. 

We denote the subgroup of $PGL_{n+1}({\mathbb{R}})$ that preserves a properly convex set $\Omega$, as $PGL_{n+1}({\Omega})$. Let $\Gamma$ be a discrete and torsion free subgroup of $PGL_{n+1}({\Omega})$. Then $\Omega\backslash\Gamma$ is a manifold that inherits a \emph{convex projective structure} or is a \emph{convex projective manifold.} If $\Omega$ were strictly convex, we say the result is a \emph{strictly convex projective manifold}. 

As discussed in Section \ref{prelim}, $\H{n}$ is a subgeometry of $\mathbb{RP}^{n}$; in particular, it is the Hillbert metric on an open ellipsoid in an affine patch. Thus, all complete hyperbolic structures on a manifold are also strictly convex projective structures. Moreover, when considering closed hyperbolic manifolds, small deformations of the complete hyperbolic structure will result in strictly convex projective structures \cite{Koszul1968-gt}, \cite{benoist2004convexes}. (Note that these deformations are not guaranteed to exist, as shown in \cite{Cooper2007-cs}.)

However, in the case of cusped hyperbolic manifolds, things are a bit different. It is known due to work of \citet{Marquis2012-kj} that bending in this setting does give rise to properly convex projective structures; however, in \cite{Ballas2020-nm}, there examples of bending deformations of cusped hyperbolic manifolds that do \emph{not} give rise to strictly convex projective structures. The question then becomes when bending gives rise to such structures. 

In the same work, \citet{Ballas2020-nm} show that there are conditions on the pair $(M, \Sigma)$, where $M$ is a cusped hyperbolic $n$-manifold and $\Sigma$ is an embedded totally geodesic hypersurface in $M$ for which we can do (projective) bending. These conditions determine if the deformations give rise to strictly convex projective structures. In particular, the authors introduce the notions of a \emph{standard cusp} and a \emph{bent cusp} and show that these are the only cusp types possible after bending. This fits into a larger result which classifies the generalized cusps possible for any cusped convex projective manifold \cite{Ballas2020-do}. Bent cusps serve as an obstruction to the deformation giving rise to a strictly convex projective structure. 

\begin{figure}
    \centering
\includegraphics[width=0.5\linewidth]{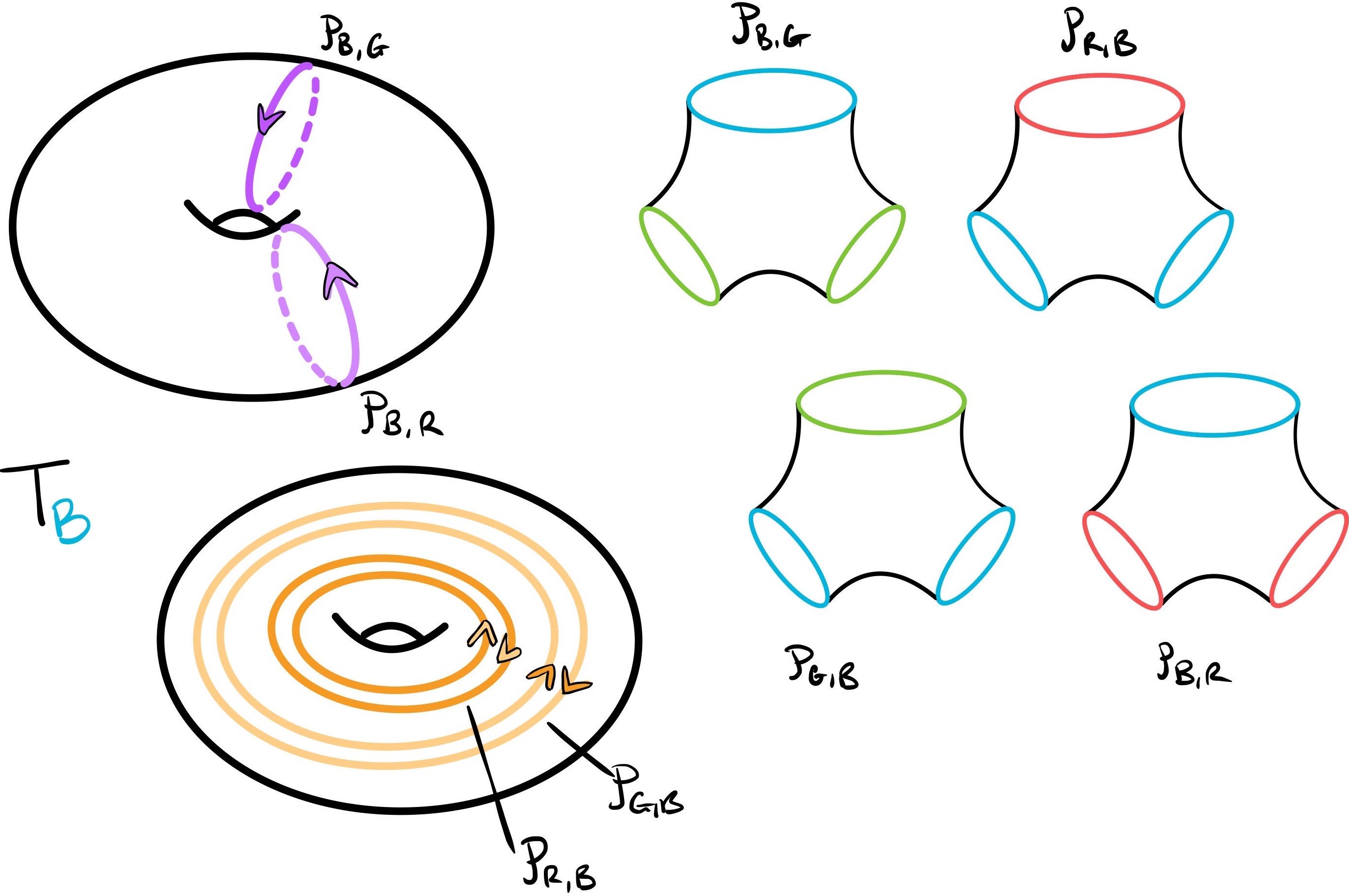}
    \caption{The totally geodesic thrice-punctured spheres that intersect the ``blue'' cusp and their intersections}
    \label{fig:cancel}
\end{figure}

The following results relate the topology of how $\Sigma$ sits in $M$ to the outcomes of bending, in the specific setting of hyperbolic $3$-manifolds. In this result, the notation $\mathfrak{X}_{scp}$ ($\Gamma$, $\text{PGL}_{n+1}$ ($\mathbb{R}$)) refers to the strictly convex projective representations of $\Gamma$ in $PGL_{n+1}(\mathbb{R})$. Also, the curve $\alpha$ is one transverse to $\Sigma \cap\partial M$; we are interested in computing the signed intersection $\iota$($\alpha$, $\Sigma \cap T_{i}$). 

\begin{prop}[Prop 3.2.7 in \cite{Ballas_2013}] \label{prop:sam}
Let $\Sigma$ be a non-compact, finite-volume, totally geodesic hypersurface of a cusped, finite-volume hyperbolic $3$-manifold $M$.  Suppose that each cusp $C_i$ of $M$ is diffeomorphic to $T_i$ $\times$ $[1,\infty)=0$, where $T_{i}$ is a torus. If $\rho_{t}$ is the family of representations obtained by bending along $\Sigma$ then $[\rho_{t}]$ is contained in $\mathfrak{X}_{scp}$  ($\Gamma$, $\text{PGL}_{n+1}$ ($\mathbb{R}$)) if and only if $\iota$($\alpha$, $\Sigma \cap T_{i}$) $=0$ for each $i$. 
\end{prop}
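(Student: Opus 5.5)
The plan is to split the statement into a global convexity part and a purely peripheral computation. By the result of \citet{Marquis2012-kj} quoted above, every $\rho_t$ obtained by bending along $\Sigma$ is the holonomy of a finite-volume properly convex projective structure on $M$. For such structures I will use the criterion of Cooper--Long--Tillmann and Crampon--Marquis. Namely, a finite-volume properly convex manifold whose ends are maximal-rank cusps is strictly convex if and only if every peripheral subgroup $\rho_t(\pi_1(T_i))$ is conjugate into a parabolic subgroup of $SO^+(3,1)$, that is, every cusp is a \emph{standard cusp} in the language of \citet{Ballas2020-nm}. If instead some peripheral group acquires a non-unipotent element, the cusp is \emph{bent}. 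Then the classification of generalized cusps \cite{Ballas2020-do} shows that $\partial\Omega$ contains a nontrivial segment, so $[\rho_t]\notin\mathfrak{X}_{scp}$. Hence both directions reduce to deciding, cusp by cusp, whether $\rho_t|_{\pi_1(T_i)}$ stays unipotent.

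For the cusp computation, fix $i$. The intersection $\Sigma\cap T_i$ is a finite union of disjoint parallel essential curves $\alpha_1,\dots,\alpha_m$, each carrying the transverse orientation induced from $\Sigma$. Choose $\pi_1(T_i)=\langle\mu,\lambda\rangle$ with $\mu$ parallel to the $\alpha_j$, and take $\lambda$ to be the transverse curve $\alpha$. Then $\mu\in\pi_1(\Sigma)$ up to conjugacy, so $\rho_t(\mu)=\rho_0(\mu)$. Meanwhile $\lambda$ crosses each $\alpha_j$ once, so $\rho_t(\lambda)$ is a product $c^{(1)}_{\epsilon_1 t}\cdots c^{(m)}_{\epsilon_m t}\,\rho_0(\lambda)$. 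Here $\epsilon_j=\pm1$ is the sign of the crossing, and, exactly as in the proof of Lemma~\ref{lem:use}, $c^{(j)}_s=\tau_{k_j}c_s\tau_{k_j}^{-1}$ with $\tau_k$ the parabolic translation fixing the cusp point. In the normalized basis, $c_s=\mathrm{diag}(e^{-3s},e^{s},e^{s},e^{s})$ conjugated so that the cusp point is an eigenvector. All the $\tau_{k_j}$ lie in the unipotent radical of the Borel-type stabilizer of that point, so the product is upper triangular. Its diagonal part is $c_{\iota t}$ with $\iota=\sum_j\epsilon_j=\iota(\alpha,\Sigma\cap T_i)$.

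If $\iota=0$, then $\rho_t(\lambda)$ is unipotent and commutes with the unchanged parabolic $\rho_0(\mu)$. I will then check that the group they generate is conjugate to $\rho_0(\pi_1(T_i))$, as in the explicit matrix computed in Lemma~\ref{lem:use}: the off-diagonal correction terms can be absorbed by conjugating by an element of the centralizer of $\rho_0(\mu)$. So the cusp is standard. If $\iota\neq0$, then $\rho_t(\lambda)$ has eigenvalues $e^{-3\iota t},e^{\iota t}$ different from $1$ for $t\neq0$, so the cusp is not parabolic, hence bent. Combining the cusps gives the stated equivalence, requiring $\iota=0$ for each $i$.

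I expect the main obstacle to be the $\iota=0$ direction at the level of the whole peripheral group, rather than the single element $\lambda$. The correction terms $(e^{\pm t}-1)k_j$ must cancel or be conjugated away simultaneously for $\mu$ and $\lambda$, and the order of the crossings matters because the $c^{(j)}$ do not commute. I would first try to prove this by induction on $m$. At each step I would pair an adjacent $+$ crossing with a $-$ crossing, using the conjugacy statement of Lemma~\ref{lem:use}, and then move the resulting conjugator into the centralizer of $\rho_0(\mu)$. A secondary point is to confirm that the Cooper--Long--Tillmann criterion applies with only properly convex input, which is what Marquis's theorem supplies.
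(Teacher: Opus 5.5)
The paper does not prove this proposition; it is quoted from \cite{Ballas_2013}, so your argument has to stand on its own. The outline is reasonable. It uses a finite-volume properly convex structure for every $t$ (the finite-volume statement is really in \cite{Ballas2020-nm}), together with the Cooper--Long--Tillmann criterion: with $\Gamma$ relatively hyperbolic and parabolic cusp holonomy, strict convexity follows, and conversely cusp elements of a strictly convex finite-volume quotient are parabolic. Your upper-triangular computation with diagonal $c_{\iota t}$ then correctly disposes of the case $\iota\neq 0$.

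The gap is in the case $\iota=0$. You plan to show that $\langle\rho_0(\mu),\rho_t(\lambda)\rangle$ is conjugate to $\rho_0(\pi_1(T_i))$ by pairing opposite crossings via Lemma~\ref{lem:use}. That claim is false for every $t\neq 0$ once $\Sigma\cap T_i\neq\emptyset$, because bending changes the Euclidean shape of the cusp. The computation in Lemma~\ref{lem:use} only shows that $\rho_t(\lambda)$ is conjugate to $\rho_0(\lambda)$ as a single element, and the lemma's conclusion holds only to first order. Even one cancelling pair does not act on $\pi_1(T_i)$ by conjugation.

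Concretely, take $Q=-2x_0x_3+x_1^2+x_2^2$ with cusp point $e_0$, and let $E_{ij}$ denote the elementary matrices. Set $\rho_0(\mu)=\exp u(E_{01}+E_{13})$ and $\rho_0(\lambda)=\exp\bigl(w_1(E_{01}+E_{13})+w_2(E_{02}+E_{23})\bigr)$ with $w_2>0$. The crossed planes are $\tau_{k_j}\{x_2=0\}$, where $\tau_k=\exp k(E_{02}+E_{23})$ and $k_1<\dots<k_m$. Define the gaps $g_j=k_{j+1}-k_j$ for $j<m$ and $g_m=k_1+w_2-k_m$, so that $\sum_j g_j=w_2$. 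Let $S_j=t(\epsilon_1+\dots+\epsilon_j)$, so $S_m=0$ when $\iota=0$. Multiplying out gives $\rho_t(\lambda)=I+w_1(E_{01}+E_{13})+XE_{02}+YE_{23}+ZE_{03}$ with
\[
X=\sum_{j=1}^m g_j\,e^{4S_j},\qquad Y=\sum_{j=1}^m g_j\,e^{-4S_j}.
\]

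Since $X,Y>0$, you can conjugate by $\mathrm{diag}(1,1,\sqrt{X/Y},1)$ and then by $I+\beta E_{02}$ for a suitable $\beta$; both centralize $\rho_0(\mu)$. This puts the peripheral group into the unipotent radical of the $SO^+(3,1)$-stabilizer of $e_0$, with $\lambda$ acting as translation by $(w_1,\sqrt{XY})$. So the cusp is standard, which is all the criterion needs.

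However, $(\rho_t(\lambda)-I)^2=(w_1^2+XY)E_{03}$ and $(\rho_0(\mu)-I)^2=u^2E_{03}$, and the ratio of these coefficients is invariant under simultaneous conjugation. By Cauchy--Schwarz, $XY\ge w_2^2$, with equality only at $t=0$. Hence the peripheral representation is not conjugate to $\rho_0|_{\pi_1(T_i)}$.

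You should therefore replace your target with ``conjugate into a parabolic subgroup of $SO^+(3,1)$''. The real content of this direction is the positivity of $X$ and $Y$, which your sketch does not isolate: if $XY<0$, the group could not be conjugated into $SO^+(3,1)$ at all. The closed formula also handles any order of crossings, so the induction on $m$ is unnecessary.
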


As a corollary of Prop \ref{prop:sam} and Theorem \ref{thm:PH1PG}, we have the following: 
\begin{corollary}[Cor. \ref{cor:proj}] 
Recall that $dim_{\mathbb{R}} PH^{1}(\Gamma, \nu_{4}) = 3$. These deformations are all integrable; furthermore, they are all strictly convex projective deformations. 
\end{corollary}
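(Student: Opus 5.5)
We argue in three steps: integrability, strict convexity of each bending family, and then strict convexity of combinations.

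\textbf{Integrability.} By Proposition \ref{thm:PH1PG}, $PH^1(\Gamma,\nu_4)$ is spanned by the classes $\beta(R)$, $\beta(B)$, $\beta(G)$. Each is a difference of two infinitesimal bendings along a pair $\{P_{(m,n)},P_{(m,n')}\}$ of disjoint totally geodesic thrice-punctured spheres. Each bending $\rho^{P}_t$ is a genuine one-parameter family in $\mathrm{Hom}(\Gamma,SL_4(\mathbb{R}))$, obtained from the explicit centralizers of Appendix \ref{app:fox}. The six surfaces pairwise meet at right angles or are disjoint, so by \cite{Bobb2019-ct} their bending deformations commute. Hence for $(s_R,s_B,s_G)$ near $0$ the composite
\[
\rho_{s}=\rho^{P_{(R,G)}}_{s_R}\circ\rho^{P_{(R,B)}}_{-s_R}\circ\rho^{P_{(B,R)}}_{s_B}\circ\rho^{P_{(B,G)}}_{-s_B}\circ\rho^{P_{(G,R)}}_{s_G}\circ\rho^{P_{(G,B)}}_{-s_G}
\]
is a well-defined three-parameter family of representations. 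Its derivative at $0$ realizes every class in $PH^1(\Gamma,\nu_4)$, so every class is integrable.

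\textbf{Strict convexity of each bending.} We apply Proposition \ref{prop:sam} to each $P_{(m,n)}$, treating the combined family by the same cusp-level argument. For each cusp torus $T_i$ we compute the signed intersection $\iota(\alpha,\Sigma\cap T_i)$ for a transverse curve $\alpha$.
\begin{itemize}
\item If $P_{(m,n)}\cap T_i$ is a single meridian (the case $m=i$), we should \emph{not} apply Proposition \ref{prop:sam} to $P_{(m,n)}$ alone, since this intersection is nonzero. Instead we observe that the two surfaces $P_{(i,n)},P_{(i,n')}$ meet $T_i$ in parallel meridians, and they are bent with opposite signs. By Lemma \ref{lem:use}, the restriction of the combined deformation to $\pi_1(T_i)$ is a conjugation. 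The cusp therefore stays a standard cusp rather than a bent cusp.
\item If $P_{(m,n)}\cap T_i$ is a pair of oppositely oriented parallel longitudes (the case $n=i$), the signed intersection is $0$. This case is exactly the hypothesis of Proposition \ref{prop:sam}.
\end{itemize}

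\textbf{Strict convexity of combinations.} The main obstacle is that Proposition \ref{prop:sam} is stated for bending along a single surface, whereas our deformations combine several. I would handle this by reproving the relevant direction in the combined setting, following the argument of \cite{Ballas_2013}:
\begin{enumerate}
\item Strict convexity of a finite-volume convex projective structure near the hyperbolic one is governed by the cusp holonomies being conjugate to standard (parabolic) cusps; the theory of \cite{Ballas2020-do,Ballas2020-nm} gives this.
\item The preceding step shows that every cusp group of $\rho_s$ is conjugate into its original parabolic form.
\item Marquis' result \cite{Marquis2012-kj} shows that these structures are properly convex.
\item Properly convex structures with only standard cusps are strictly convex.
\end{enumerate}

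\textbf{Smoothness of the character.} Finally, $\dim H^1(\Gamma,\nu_4)=6$ is realized by six commuting integrable bendings (Proposition \ref{thm:H1PG}). The $\mathfrak{so}(3,1)$ part contributes the 3-dimensional cusp-shape (Thurston) deformations, which are integrable by hyperbolic Dehn filling theory. Thus every Zariski tangent vector at $[\rho]$ integrates, and $\chi(\Gamma,SL_4(\mathbb{R}))$ is locally a manifold whose dimension equals that of $H^1(\Gamma,\mathfrak{sl}_4)$. Hence $\rho$ is a smooth point.
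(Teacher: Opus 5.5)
\textbf{There is a genuine gap, in the strict convexity part.} Your integrability argument is the paper's: commuting bendings along orthogonally meeting surfaces, via Bobb. That part is fine.

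The gap comes from a missed observation. The two surfaces $P_{(m,n)}$ and $P_{(m,n')}$ with the same waist colour are disjoint, as you yourself note. So $\mathcal{P}_m=P_{(m,n)}\sqcup P_{(m,n')}$ is a single embedded, non-compact, finite-volume totally geodesic hypersurface. Give its two components opposite transverse orientations; then bending along $\mathcal{P}_m$ integrates exactly $\beta(m)$.

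Proposition~\ref{prop:sam} now applies directly:
\begin{itemize}
\item on $T_m$, the two parallel waist curves are oppositely oriented;
\item each other torus meets only one component of $\mathcal{P}_m$, in an oppositely oriented pair of cuff curves.
\end{itemize}
Hence $\iota(\alpha,\mathcal{P}_m\cap T_j)=0$ for every $j$. That is the paper's whole argument, and it removes your ``main obstacle'' for the basis directions $\beta(R),\beta(B),\beta(G)$.

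Applying the proposition surface by surface cannot work. Its hypothesis is vanishing at \emph{every} cusp at once, so your second bullet (``exactly the hypothesis'') misreads it. In fact, by the ``only if'' direction, each single-surface bending is \emph{not} strictly convex.

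\textbf{Your replacement argument does not close the gap.} Reproving Ballas's criterion for the combined family is only a sketch, and its steps are not supported as stated.
\begin{itemize}
\item Step~3: Marquis (and Ballas--Marquis) handle bending along an embedded, possibly disconnected, hypersurface. A general point of your three-parameter family bends along $\mathcal{P}_R,\mathcal{P}_G,\mathcal{P}_B$, which cross one another. Proper convexity there needs a different input, such as a convexity result for bending along orthogonally intersecting hypersurfaces of the kind in Bobb's work.
\item Step~2 does not follow from what precedes it. On each $T_i$ the composite bends along two waist curves and four cuff curves, with independent parameters. Lemma~\ref{lem:use} treats only one parallel pair bent with opposite signs. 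For the cuff curves you computed a signed intersection, but never showed the restriction is a conjugation.
\item Step~4 is true, but it is a separate theorem (Cooper--Long--Tillmann, via relative hyperbolicity of $\pi_1 M$), not something your citations supply.
\end{itemize}
Your worry about arbitrary combinations is legitimate; the paper itself only appeals to commutation there. But the basis case needs none of this machinery, and your fix does not settle the general case.

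\textbf{Smoothness.} This paragraph is outside the statement, and its logic fails. Two integrable families with complementary tangent spaces do not make every tangent vector integrable, nor $[\rho]$ a smooth point; two planes in $\mathbb{R}^4$ meeting at a point show this. Here the families are the bendings and the Dehn-filling deformations, and the latter have real dimension $6$, not $3$.
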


\begin{proof}
    From Theorem \ref{thm:PH1PG}, we have that $dim_{\mathbb{R}} PH^{1}(\Gamma, \nu_{4}) = 3$; moreover, we establish that there is a basis for this deformation space realized by the bending deformations supported on ``cancelling pairs'' of surfaces. This basis was denoted \{$\beta(R),\  \beta(B),\ \beta(G)$\}. In the proof of Lemma \ref{lem:use}, we show that deformations taking the form that these basis elements do remain integrable, as they exist for the same amount of time $t$ as their component integrable deformations. The deformations in $PH^{1}(\Gamma, \nu_{4})$ are then in fact integrable, even in combination with each other, as the totally geodesic hypersurfaces we are bending along meet orthogonally. (See \cite{Bobb2019-ct} for a further discussion of this construction).

    The union of these cancelling pairs of surfaces, one for each cusp, is a non-compact, finite-volume, totally geodesic hypersurface of $M$, which we denote $\mathcal{P}_i$. For each cusp $C_i$, the intersection of the torus cross section $T_i$ and $\mathcal{P}_i$ is a pair of oppositely oriented meridians, as depicted in Figure \ref{fig:cancel}. Then our curve $\alpha$ can be a longitude, parallel to those also depicted in Figure \ref{fig:cancel} for our convenience. We choose this $\mathcal{P}_i$ for the $\Sigma$ in the Proposition \ref{prop:sam}, and now show that $\iota$($\alpha$, $\mathcal{P}_i \cap T_{j}$) $=0$ for a torus $T_j$ in each cusp $C_j$.

    First, we have that $\iota$($\alpha$, $\mathcal{P}_i \cap T_{i}$) $=0$ as desired, since $\alpha$ meets our two oriented curves each once with opposite sign. Then, it remains to inspect $\iota$($\alpha$, $\mathcal{P}_i \cap T_{j}$) when $i \neq j$. The intersection $\mathcal{P}_i \cap T_{j}$ is a pair of oppositely oriented longitudes in $T_j$ (such as seen in Figure \ref{fig:cancel}). Thus by a similar argument to previous, $\iota$($\alpha$, $\mathcal{P}_i \cap T_{j}$) $=0$. 

    Thus, we have shown that for each such $\mathcal{P}_i$, Proposition \ref{prop:sam} applies; there are three such surfaces, each shown to support a linearly independent bending deformation from the others, and because they are pairwise orthogonal, these deformations commute. 
\end{proof}

\section*{End Remarks}
\subsection*{Future Directions} Throughout, we have assumed that $n \geq 3$; it is work in progress to show that $n=2$ yields analogous results to Theorems \ref{thm:bbson1} and \ref{thm:bbpgln}. Ultimately, this would appear as infinitesimal bending deformations supported along piecewise totally geodesic graphs embedded in a surface. The author hopes to present these results in a context where their utility is immediately displayed, as was done here in the case of Section \ref{sec:defbor}.   

 Furthermore, there is still more to be learned from the techniques used in Section \ref{sec:defbor}. While the original Menasco-Reid conjecture was proposed in dimension three, there are higher dimensional analogues that are still open. For example, it is unknown if there exists a hyperbolic link complement of 2-tori in a closed (smooth) simply connected 4-manifold that contains a closed embedded totally geodesic hyperbolic 3-manifold \cite{Chu2023-xo}. 

 Finally, it is presently unknown to the author how many of the flexible 3-manifolds described in \cite{Cooper2007-cs} admit a branched bending complex that may be able to describe its deformation space. This would be an exciting direction to explore, especially in its connection to the original conjecture of Kourouniotis \cite{Mathematisches_Forschungsinstitut_Oberwolfach1985-bo}: that is, if the deformation space of a compact hyperbolic manifold cannot be described only by bending, when can it be described by branched bending?

\appendix

\section{A Note on Parabolic Cohomology}
% Be sure to set \appendix or \appendices in structural/body.tex
 \label{app:cusp}
This section serves as a disambiguation for the notions of \emph{cuspidal cohomology} and \emph{parabolic cohomology}  used throughout. There are a few definitions that have subtle distinctions in their implementations; and still a few more computational perspectives that give varying results. Here we give a discussion of and justify the choices made throughout.

Parabolic cohomology was first introduced \cite{Weil1964-nb}, and then further developed and utilized in \cite{Guruprasad1995GroupSG}, \cite{Kim1999TheSG}, and \cite{Lawton2008-vv}. We define \emph{parabolic cohomology} as follows. Let $\Gamma$ be a nonuniform lattice in $SO(3,1) \subset G$. Then $\mathbb{H}^{3}/\Gamma$ is a finite volume, cupsed hyperbolic $3$-manifold $M$. Then:
\[ PH^{1}(\Gamma , \mathfrak{g}_{Ad_{\rho}}) = PZ^{1}(\Gamma , \mathfrak{g}_{Ad_{\rho}})/B^{1}(\Gamma , \mathfrak{g}_{Ad_{\rho}}) \]
where 
\[PZ^{1}(\Gamma , \mathfrak{g}_{Ad_{\rho}}) = \{ c \in Z^{1}(\Gamma , \mathfrak{g}_{Ad_{\rho}})\  \Big\vert\  c\vert_{\gamma} \in B^{1}(\Gamma , \mathfrak{g}_{Ad_{\rho}}),\  \forall \gamma \ \text{parabolic} \} \]

On the other hand, we define the \emph{cuspidal cohomology} of $M$, denoted throughout here as $H^{1}_{\partial}(\Gamma , \mathfrak{g}_{Ad_{\rho}})$ as:
\[ \ker \text{res:}\  H^{1}(\Gamma , \mathfrak{g}_{Ad_{\rho}}) \to H^{1}(\partial \Gamma , \mathfrak{g}_{Ad_{\rho}})  \]
where $\mathfrak{g}$ is the Lie algebra of $G$. 

In the settings of \cite{Weil1964-nb}, \cite{Guruprasad1995GroupSG}, \cite{Kim1999TheSG}, and \cite{Lawton2008-vv}, the group $\Gamma$ is a surface group, and so as each cusp subgroup is generated by precisely one parabolic element, it is fairly straightforward to show that these notions are equivalent. However, for a general hyperbolic $n$-manifold, it is not obvious that these definitions coincide; there may be groups generated by parabolic elements for which asking that each generator is deformed trivially in $\mathfrak{g}$ is not the same as asking that the entire group is deformed trivially.

It is a theorem due to \citet{kapovich1994deformations} (see Theorem 3) that for $\mathfrak{so}(4,1)$, parabolic and cuspidal cohomology are indeed the same. It is to the best of the author's knowledge that this is the only result that explicitly addresses this identification in the dimension $\geq 3$ setting. 
As a mild strengthening of this result, we can also say:
\begin{prop} \label{propA}
For $\Gamma$ a nonuniform lattice in $SO(n,1) \subset SO(n+1,1)$, with $n \geq 3$
\[ PH^{1}(\Gamma , \mathfrak{so}(n+1,1)) = H^{1}_{\partial}(\Gamma , \mathfrak{so}(n+1,1)) \]
\end{prop}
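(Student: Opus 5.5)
The plan is to prove the two inclusions separately. The inclusion $H^{1}_{\partial} \subseteq PH^{1}$ is formal. If the class of $c$ restricts to zero in $H^{1}(\partial \Gamma, \mathfrak{so}(n+1,1))$, then for each peripheral subgroup $P$ there is $v_P$ with $c(\gamma) = (I-\mathrm{Ad}_\gamma)v_P$ for all $\gamma \in P$. Every parabolic $\gamma \in \Gamma$ lies in a conjugate $gPg^{-1}$ of some cusp subgroup, and the cocycle identity $c(g\eta g^{-1}) = (I-\mathrm{Ad}_{g\eta g^{-1}})c(g) + \mathrm{Ad}_g c(\eta)$ shows that $c(\gamma) \in \mathrm{im}(I-\mathrm{Ad}_\gamma)$. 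So the real content is the reverse inclusion.

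For $PH^{1} \subseteq H^{1}_{\partial}$, I would first use the $\mathrm{Ad}$-invariant splitting $\mathfrak{so}(n+1,1) = \mathfrak{so}(n,1) \oplus \mathbb{R}^{n,1}$ to treat the two summands separately. Fix a cusp subgroup $P$. Since $\Gamma$ is a torsion-free lattice, $P$ is a rank $n-1$ lattice of parabolic translations fixing a point $\infty \in \partial \mathbb{H}^n$. Conjugate so that $P \subset N$, the unipotent radical of the stabilizer of $\infty$, with $N \cong \mathbb{R}^{n-1}$. The hypothesis says that for each $\gamma \in P$ separately, $c(\gamma) = (I-\mathrm{Ad}_\gamma)v_\gamma$, with $v_\gamma$ possibly depending on $\gamma$. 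We need a single $v$ working for all of $P$. Since $P$ is finitely generated abelian, it suffices to show that the restriction map
\[ H^{1}(P, V) \to \prod_{\gamma \in P} H^{1}(\langle \gamma \rangle, V) \]
is injective for $V = \mathbb{R}^{n,1}$ and $V = \mathfrak{so}(n,1)$.

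To prove this injectivity I would compute directly using the unipotent structure. $P$ is Zariski dense in $N$, and each of these modules is an $N$-module. By van Est–type arguments (or directly, since $P$ is a lattice in $N$), $H^{1}(P,V) \cong H^{1}(\mathfrak{n}, V)$, computed by Lie algebra cohomology of the abelian algebra $\mathfrak{n} \cong \mathbb{R}^{n-1}$. A class in $H^1(\mathfrak n, V)$ is a linear map $\phi: \mathfrak{n} \to V$ with $X\phi(Y) = Y\phi(X)$, modulo maps of the form $X \mapsto Xv$. The per-element hypothesis says $\phi(X) \in \mathrm{im}(X)$ for each $X$ in a Zariski dense set, hence, by closedness of the condition of bounded rank, for all $X \in \mathfrak n$. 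For $V=\mathbb{R}^{n,1}$, in a null basis $e_0, e_1,\dots,e_{n-1}, e_n$ each $X\in\mathfrak n$ acts nilpotently with $\ker X$ and $\mathrm{im}\,X$ explicit: $\mathrm{im}\,X = \mathrm{span}(e_0, \hat X)$, where $\hat X$ is the vector in $\mathrm{span}(e_1,\dots,e_{n-1})$ corresponding to $X$. A short linear-algebra computation using the commutation relation then shows that $\phi$ is a coboundary. For $\mathfrak{so}(n,1)$ one can decompose $\mathfrak{so}(n,1)$ into weight spaces of the Cartan element normalizing $\mathfrak n$ and run the same computation, or appeal to the $n$-dimensional analogue of Kapovich's argument.

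The main obstacle will be this last step: showing that a cocycle on $\mathfrak n$ which is pointwise a coboundary is globally a coboundary. For $n \ge 3$ we have $\dim \mathfrak n \ge 2$, which is exactly where the condition is nontrivial. This is the step I expect to require actual work. I would first try to dispose of it dimensionally. Count $\dim H^1(\mathfrak n, V)$ via a Kostant-type computation. Then exhibit the pointwise-coboundary cocycles as a linear family: by the rank condition, the function $X \mapsto \phi(X)$ mod $\mathrm{im}\,X$ vanishes identically, so $\phi$ agrees with $X \mapsto Xv(X)$ where $v(X)$ can be chosen polynomial, and symmetry forces $v$ to be constant modulo $\ker$. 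If that fails, I would fall back on handling $\mathbb{R}^{n,1}$ explicitly and citing Garland–Raghunathan vanishing for the $\mathfrak{so}(n,1)$ part, which is already infinitesimally rigid relative to the cusps.
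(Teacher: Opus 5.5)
Your treatment of the $\mathbb{R}^{n,1}$ summand is correct, but your primary plan for the $\mathfrak{so}(n,1)$ summand has a step that fails. The restriction map $H^1(P,\mathfrak{so}(n,1))\to\prod_{\gamma\in P}H^1(\langle\gamma\rangle,\mathfrak{so}(n,1))$ is \emph{not} injective for any $n\ge 3$, so neither a weight-space computation nor a Kostant-type count can prove it. To see this, identify $P$ with a lattice $\Lambda\subset\mathfrak n\cong\mathbb{R}^{n-1}$ and take any linear map $L:\mathfrak n\to\mathfrak n$.
\begin{itemize}
\item Since $\mathrm{Ad}_N$ fixes $\mathfrak n$ pointwise, $c(\exp X)=L(X)$ is a cocycle on $P$.
\item Since $[X,\mathfrak a]=\mathbb{R}X$ and $[X,\mathfrak m]=X^{\perp}\subset\mathfrak n$, you have $\mathfrak n\subseteq\mathrm{im}(\mathrm{ad}_X)=\mathrm{im}(I-\mathrm{Ad}_{\exp X})$ for every $X\neq 0$. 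So $c$ is a coboundary on every cyclic subgroup.
\item Suppose instead $c(\exp X)=(I-\mathrm{Ad}_{\exp X})v$ on all of $P$. The $\mathfrak m\oplus\mathfrak a$ component of the right side is $-[X,v_-]$, which forces the $\mathfrak n^-$-component $v_-$ of $v$ to vanish. Then $L\in\mathbb{R}I+\mathfrak{so}(n-1)$.
\end{itemize}
Hence the traceless symmetric $L$ are pointwise trivial but nonzero in $H^1(P,\mathfrak{so}(n,1))$. These are the infinitesimal changes of cusp shape, which keep every parabolic parabolic.

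So your fallback is mandatory, and it must be a global statement. For $n\ge 4$ it works, since Garland--Raghunathan give $H^1(\Gamma,\mathfrak{so}(n,1))=0$ outright. For $n=3$, however, $H^1(\Gamma,\mathfrak{so}(3,1))\neq 0$, and "infinitesimally rigid relative to the cusps" in the sense $H^1_\partial(\Gamma,\mathfrak{so}(3,1))=0$ is not enough. You need that no nonzero global class restricts on each cusp to a pointwise-trivial (shape-change) class. That is the parabolic form $PH^1(\Gamma,\mathfrak{so}(3,1))=0$, which the paper quotes from Garland--Raghunathan in the main text. With that input your reduction matches the paper's: its proof of this proposition likewise treats only the $\mathbb{R}^{n,1}$ summand.

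For $\mathbb{R}^{n,1}$ your route differs from the paper's. The paper stays at the group level. It writes $u(a_i)=(1-a_i)\alpha_i$ on generators and uses $(1-a_i)(1-a_j)\alpha_i=(1-a_i)(1-a_j)\alpha_j$ to equalize the last coordinates of the $\alpha_i$, after changing generators so the translation vectors are pairwise non-orthogonal. It then inductively intersects affine translates of the $\ker(1-a_i)$.

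You instead pass to $H^1(\mathfrak n,\mathbb{R}^{n,1})$:
\begin{itemize}
\item Cocycles are $\phi(X)=a(X)e_0+BX$ with $B$ symmetric; the $e_n$-component dies because $\dim\mathfrak n\ge 2$.
\item Coboundaries are exactly those with $a$ arbitrary and $B\in\mathbb{R}I$.
\item The condition $\phi(X)\in\mathrm{span}(e_0,\hat X)$ makes every vector an eigenvector of $B$, so $B$ is scalar and $\phi$ is a coboundary.
\end{itemize}
This is cleaner and identifies the obstruction space $H^1(P,\mathbb{R}^{n,1})$ with traceless symmetric matrices. It also shows why the hypothesis must be used beyond an orthogonal generating set, since a diagonal $B$ would survive there. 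It costs the Nomizu--van Est comparison $H^1(P,V)\cong H^1(\mathfrak n,V)$, which the paper's elementary but more opaque induction avoids.
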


Similarly, we can argue: 
\begin{prop} \label{propA2}
For $\Gamma$ a nonuniform lattice in $SO(n,1)  \subset SL_{n+1}(\mathbb{R})$, 
\[ PH^{1}(\Gamma , \slr{n+1}) = H^{1}_{\partial}(\Gamma , \slr{n+1}) \]
\end{prop}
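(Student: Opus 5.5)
The claim is that $PH^1(\Gamma,\slr{n+1}) = H^1_{\partial}(\Gamma,\slr{n+1})$, i.e.\ a cocycle that is a coboundary on each parabolic element individually is a coboundary on every peripheral subgroup as a whole. I will prove the two inclusions separately and work cusp by cusp. The inclusion $H^1_\partial\subseteq PH^1$ is formal: if $c|_P$ is a coboundary on a cusp subgroup $P$, say $c(\gamma)=(I-\mathrm{Ad}_\gamma)v$ for all $\gamma\in P$, then in particular $c(\gamma)\in\mathrm{im}(I-\mathrm{Ad}_\gamma)$ for every parabolic $\gamma\in P$. Every parabolic of $\Gamma$ lies in some cusp subgroup, so $c\in PZ^1$.

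For the reverse inclusion, fix a cusp subgroup $P\cong\mathbb{Z}^{n-1}$. After conjugation, $\rho(P)$ is a lattice of unipotent translations in the unipotent radical $N\cong\mathbb{R}^{n-1}$ of the stabilizer of a point at infinity. I first plan to show that $P$ has Zariski closure $N$ and that $c|_P$ extends to a "polynomial" cocycle on $N$; the cleaner route is to use Lie algebra cohomology directly. I would decompose $\slr{n+1}=\mathfrak{so}(n,1)\oplus\nu_{n+1}$ as $\mathrm{Ad}$-modules, where $\nu_{n+1}\cong \mathrm{Sym}^2_0$ of the standard representation. The $\mathfrak{so}(n,1)$ summand is handled by the same argument as Proposition~\ref{propA}, or by Kapovich's theorem, since $\mathfrak{so}(n,1)\subset\mathfrak{so}(n+1,1)$ is a summand there. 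So it suffices to treat $V=\nu_{n+1}$. By van Est or Nomizu-type comparison for unipotent lattices, $H^1(P,V)\cong H^1(\mathfrak{n},V)$. Kostant's theorem, or a direct weight computation using the grading by the Cartan element $H$ of the parabolic, describes $H^1(\mathfrak{n},V)$ explicitly.

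The key step is to show that the restriction map
\[ H^1(P,V)\longrightarrow \prod_{\gamma\in P} H^1(\langle\gamma\rangle,V) \]
is injective. It suffices to test this on finitely many elements: the generators $\gamma_1,\dots,\gamma_{n-1}$ of $P$ together with the sums $\gamma_i\gamma_j$. I would take a class $[c]$ with each $c(\gamma)\in\mathrm{im}(I-\mathrm{Ad}_\gamma)$ and normalize $c$ by a coboundary so that $c(\gamma_1)=0$. Then $c(\gamma_j)$ satisfies the cocycle relation coming from $[\gamma_1,\gamma_j]=1$, namely $(I-\mathrm{Ad}_{\gamma_1})c(\gamma_j)=(I-\mathrm{Ad}_{\gamma_j})c(\gamma_1)=0$, so $c(\gamma_j)\in\ker(I-\mathrm{Ad}_{\gamma_1})$. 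The remaining freedom is coboundaries from $\ker(I-\mathrm{Ad}_{\gamma_1})$, i.e.\ from $V^{\gamma_1}$. One must then show that the conditions $c(\gamma)\in\mathrm{im}(I-\mathrm{Ad}_\gamma)$ for $\gamma=\gamma_j$ and $\gamma=\gamma_1^a\gamma_j^b$ force $c(\gamma_j)\in(I-\mathrm{Ad}_{\gamma_j})V^{\gamma_1}$ compatibly in $j$. I would do this in explicit coordinates on $V$, where $\rho(\gamma_x)$ is the standard unipotent matrix with translation vector $x\in\mathbb{R}^{n-1}$, as in the matrix form used in Lemma~\ref{lem:use}. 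The cocycle on $P$ is then determined by a linear map $\mathbb{R}^{n-1}\to V$ modulo coboundaries. The condition "coboundary on each single $\gamma_x$" becomes a polynomial identity in $x$, which should force that linear map to be of the form $x\mapsto (I-\mathrm{Ad}_{\gamma_x})v$ for a single $v$.

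The main obstacle is this last linear-algebra step. In the $\mathbb{R}^{n,1}$ case, $H^1(\mathfrak{n},V)$ is small and weight-homogeneous. For $\nu_{n+1}$ there are more weights, because $\mathrm{Sym}^2$ has $H$-weights $-2,\dots,2$, and for $n\ge3$ there may be a candidate class in $H^1(\mathfrak{n},V)$ that is trivial on every cyclic subgroup but not on $P$. This would be something like an alternating form $x\mapsto B(x,\cdot)$ with $B$ skew. I would first compute $H^1(\mathfrak{n},\nu_{n+1})$ via Kostant, restrict to the $\mathfrak{so}(n-1)$-equivariant pieces, and check each one against the single-element condition. The symmetry of $\mathrm{Sym}^2$ should kill the skew candidates; if one survives, that step fails and the statement would need $\Gamma$-invariance across conjugate parabolics, which I would then try to use instead.
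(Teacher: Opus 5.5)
Your reduction is sound and takes a different route from the paper. The paper writes $\nu_{n+1}$ in $Q_n$-block form, uses $u(a_ia_j)=u(a_ja_i)$ on a generating set to normalize the $\alpha_i$ with $u(a_i)=(1-a_i)\alpha_i$, and then inductively finds a common $\alpha'\in\bigcap_i(\alpha_i+\ker(1-a_i))$. You instead pass to $H^1(\mathfrak n,\nu_{n+1})$ and test classes against the single-element condition. However, as written there is a genuine gap exactly where the content lies. You never prove that $H^1(P,\nu_{n+1})\to\prod_{\gamma\in P}H^1(\langle\gamma\rangle,\nu_{n+1})$ is injective, and you allow that it might fail. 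Your guessed obstruction (skew forms) is also not what occurs.

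The missing computation is short. Let $e_+$ be the null vector fixed by $N$, $e_-$ the other null vector of the cusp frame, and $W=\{e_+,e_-\}^{\perp}\cong\mathbb{R}^{n-1}$. Then $X_ve_-=v$, $X_vw=\langle v,w\rangle e_+$, and $X_v$ raises the weight by one. Split cocycles by weight. Weight $-2$ carries no cocycles (since $\dim W\ge 2$), and weights $-1,1,2$ carry only coboundaries. Hence every class in $H^1(\mathfrak n,\nu_{n+1})$ is represented by
\[ c(v)=S(v)\in\mathrm{Sym}^2W,\qquad \langle S(v)u,w\rangle=T(u,v,w), \]
with $T$ a fully symmetric, trace-free cubic form on $W$ (in agreement with Kostant). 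By homogeneity, $c(v)\in\mathrm{im}(X_v)$ forces $c(v)=X_v(e_-m)=v\odot m+\langle v,m\rangle e_+e_-$ for some $m\in W$. So $\langle v,m\rangle=0$ and $S(v)=v\odot m$. With $v\odot m=\tfrac12(vm^{T}+mv^{T})$, this gives $T(v,v,v)=\langle (v\odot m)v,v\rangle=|v|^2\langle v,m\rangle=0$. This holds for every $v$ in the lattice $\Lambda\subset W$ of translation vectors of $\rho(P)$. Since $\Lambda$ is Zariski dense, the cubic $v\mapsto T(v,v,v)$ vanishes identically, and $T=0$ by polarization. Passing between the group condition and $c(v)\in\mathrm{im}(X_v)$ is harmless, because $I-e^{X_v}=-X_v\cdot\frac{e^{X_v}-I}{X_v}$ and the last factor is invertible and commutes with $X_v$. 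So no class survives, and the fallback to $\Gamma$-invariance is unnecessary.

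Two further corrections.

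First, your assertion that testing on $\gamma_i$ and $\gamma_i\gamma_j$ suffices is unproved, and it is not needed, since the hypothesis covers every element of $P$. Generators alone genuinely do not suffice. Take $n=3$, $W\cong\mathbb{C}$, and $P$ with basis $v_1=1$, $v_2=e^{i\pi/3}$. In dimension two the single-element condition at $v$ reduces to $T(v,v,v)=0$. The nonzero class with $T(z,z,z)=\mathrm{Im}(z^3)$ is then a coboundary on $\gamma_1$ and $\gamma_2$ but not on $\gamma_1\gamma_2$.

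Second, the $\son{n}$ summand cannot be handled ``by the same argument'' cusp by cusp. $H^1(\mathfrak n,\son{n})$ consists of the cusp-shape classes $c(v)=X_{Lv}$ with $L$ trace-free symmetric. Because $[\mathfrak{so}(n-1)\oplus\mathbb{R},X_v]=\mathfrak n$ for $v\neq0$, these are coboundaries on every cyclic subgroup but not on $P$. That summand therefore requires global input:
\begin{itemize}
\item for $n\ge4$, Garland--Raghunathan's vanishing of $H^1(\Gamma,\son{n})$;
\item for $n=3$, Kapovich or Garland--Raghunathan;
\item for $n=2$, the statement is trivial, since cusp groups are cyclic.
\end{itemize}
Quoting Proposition~\ref{propA} or Kapovich as statements is legitimate, using that $PH^1$ and $H^1_\partial$ both split along $\slr{n+1}=\son{n}\oplus\nu_{n+1}$. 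Rerunning a local argument is not.
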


Throughout this appendix, we use the quadratic form
\[Q_n = \begin{pmatrix}
     &&-1\\
    &I&\\
   -1 &&
\end{pmatrix}\] where $I$ is the $n-1$-dimensional identity matrix. This is a signature-$(n,1)$ quadratic form on $\mathbb{R}^{n,1}$.

\begin{proof}[Proof of \ref{propA}]
    First, by definition, we have that 
    \[  H^{1}_{\partial}(\Gamma , \mathfrak{so}(n+1,1)) \subseteq PH^{1}(\Gamma , \mathfrak{so}(n+1,1)) \] Thus, we seek to show the opposite containment. For this strategy, we follow the proof of \cite{kapovich1994deformations}; in particular, if $A$ is a copy of $\mathbb{Z}^{n-1} \subset SO(n,1)$, then it suffices to show: 
    \[ PH^{1}(A , \ \mathbb{R}^{n,1}) = 0 \]

    Let $u \in PH^{1}(A , \mathfrak{so}(n+1,1))$, and let $a_1, a_2, \dots, a_{n-1}$ be a generating set for $A$. Then 
\begin{align*}
   u(a_1)&=(1-a_1)\alpha_1 \\
   u(a_2)&=(1-a_2)\alpha_2 \\
  &\vdots\\
   u(a_{n-1})&=(1-a_{n-1})\alpha_{n-1} 
\end{align*}
    where each $\alpha_i \in \mathbb{R}^{n,1}$. We want to find $\alpha'$ such that  
    \begin{align} \label{eq:goala}
   u(a_1)&=(1-a_1)\alpha' \\
   u(a_2)&=(1-a_2)\alpha' \\
  &\vdots\\
   u(a_{n-1})&=(1-a_{n-1})\alpha' 
\end{align}
    We note that without loss of generality $u(a_{n-1})=0 $. 

    To find the desired $\alpha'$, we notice that, without loss of generality, it must be that 
%     \begin{align*}
%      \alpha'-  \alpha_1 &\in ker(1-a_{1}) \\ 
%      \alpha'-  \alpha_2 &\in ker(1-a_{2}) \\ 
%   &\vdots\\
%    \alpha' &\in ker(1-a_{n-1}) 
% \end{align*}
% or alternatively, 
\begin{align*}
     \alpha'  &\in \alpha_1 + ker(1-a_{1})  \\ 
     \alpha'   &\in \alpha_2 + ker(1-a_{2}) \\ 
  &\vdots\\
   \alpha' &\in ker(1-a_{n-1}) 
\end{align*}

Thus, it remains to find an element in the intersection of these $n-1$ subspaces of codimension-2  in $\mathbb{R}^{n,1}$. The main argument is to find the intersection of all $ker(1-a_{i})$, and then show that after adding $\alpha_i$-affine translations, the intersection is still non-empty. 

We can write a general form of a parabolic generator as
\[ a_i = \begin{pmatrix}
1 & v_i^T &\sigma\\
 & I & v_i\\
&& 1 
\end{pmatrix}	
\]
and so
\[ (I-a_i) = \begin{pmatrix}
\  & v_i^T &\sigma\\
 &  & v_i\\
&& 
\end{pmatrix}	
\]
where $v_i \in \mathbb{R}^{n-1}$. Note that as all of the $a_i$ commute with each other, they are also all simultaneously upper-triangularizable. Then there is a general form for $ker(1-a_i)$ as
\[ ker(1-a_i) = \text{Span}\left\{ \begin{pmatrix}
    m_i \\
    0
    \\0
\end{pmatrix}, 
\begin{pmatrix}
    0 \\
    n_i\\
    0
\end{pmatrix}
\right\} \Bigg|\  m\in\mathbb{R},\  \langle n, v_i \rangle = 0 \] We assume without loss of generality that the generating $a_i$ are independent; in particular, that the $v_i$ are linearly independent of each other. 

As all the $a_i$ generators commute, we have that $u(a_ia_j) = u(a_ja_i)$. From this, we obtain
\begin{align*}
    u(a_ia_j) &= u(a_ja_i)\\
     u(a_i)+a_iu(a_j) &= u(a_j)+a_ju(a_i)  \\
     %u(a_i)-a_ju(a_i) &= u(a_j)-a_iu(a_j) \\
     (1-a_i)u(a_j) &= (1-a_j)u(a_i) \\
     %(1-a_i)(1-a_j)\alpha_j &= (1-a_j)(1-a_i)\alpha_i \\
     (1-a_i)(1-a_j)\alpha_j &= (1-a_i)(1-a_j)\alpha_i
\end{align*}
for any two $\alpha_i, \ \alpha_j$. Further calculation shows
\[ (1-a_i)(1-a_j) = \begin{pmatrix}
    0 &0& v_i^Tv_j \\
    0&0&0\\
    0&0&0
\end{pmatrix}\]
which, when evaluated on $\alpha_i$ and $\alpha_j$, gives the condition $y_i=y_j$ for any two $\alpha_i, \ \alpha_j$, unless $\langle v_i, v_j \rangle =0.$ However, a change-of-basis on the initial generating set is able to guarantee no such collisions occur. Thus, without loss of generality, 
\[\alpha_i = 
\begin{pmatrix}
    x_i \\ 
    w_i \\
    0
\end{pmatrix}\ \text{such that }  x_i\in\mathbb{R},\ w_i \in \mathbb{R}^{n-1}  \]

With a description of each $u(a_i)$ and $ker(I-a_i)$, we now seek to establish the mutual $\alpha'$ described previously. We proceed by induction. For the initial case, consider 
\begin{align}
    u(a_1) &= (1-a_1)\alpha_1 \\
    u(a_2) &= (1-a_2)\alpha_2    
\end{align}
We need to find elements $k_1 \in ker(1-a_1)$, $k_2 \in ker(1-a_2)$ such that
\[ \alpha'= \alpha_1 +k_1 = \alpha_2 + k_2 \]
This is equivalent to showing that there exists some $\beta \in \mathrm{Span}\{ker(1-a_1),\ ker(1-a_2)\}$ such that 
\[  (1-a_1)(1-a_2)(\alpha_2+\beta) = (1-a_1)(1-a_2)\alpha_1\]
There exists $\beta \in ker(1-a_1)(1-a_2)$, which contains both $ker(1-a_1)$ and $ker(1-a_2)\}$---it then remains to show that 
\[ \mathrm{Span}\{ker(1-a_1),\ ker(1-a_2)\} = ker\Big((1-a_1)(1-a_2)\Big)\]
for the $a_1,\ a_2$. 

We note that $ker\Big((1-a_1)(1-a_2)\Big)$ is codimension-$1$ in $\mathbb{R}^{n,1}$---the kernel is all but $e_{n+1}$ in the chosen basis. We earlier showed that $ker(1-a_i)$ is codimension-$2$. The result follows if $ker(1-a_1) \neq ker(1-a_2)$, which is a consequence of the hypothesis that the $v_i$ are all linearly independent of each other.  

Now, we assume that for $a_1, \dots, a_k$, we have arranged
\[ \alpha = \alpha_1 +k_1 = \alpha_2 + k_2 = \cdots = \alpha_k + k_k\]
We want to show there exists $\alpha' \in \mathbb{R}^{n,1}$ , $k' \in \bigcap^k ker(1-a_i)$, and $k_{k+1} \in ker(1-a_{k+1})$ such that
\[ \alpha' = (\alpha_1 +k_1) + k' = (\alpha_2 + k_2) + k' = \cdots = (\alpha_k + k_k)  + k' = \alpha_{k+1} + k_{k+1} \] Similarly, this is equivalent to checking that 
\[ \mathrm{Span}\{\bigcap^k_j ker(1-a_j),\ ker(1-a_{k+1})\} = ker\Big((1-a_i)(1-a_{k+1})\Big)\]
for $i \in [1,k]$. But once again, there is a containment of the space on the left in the space on the right; since $ker(1-a_{k+1})$ is codimension-$1$ in $ker\Big((1-a_i)(1-a_{k+1})\Big)$, it suffices to show $\bigcap^k_j ker(1-a_j) \nsubseteq ker(1-a_{k+1})$. This again guaranteed by the independence of the $v_i$. The desired result then follows. 
\end{proof}

\begin{proof}[Proof of \ref{propA2}]
    The procedure for $\nu_{n+1}$ coefficients is very similar. For the sake of brevity, we use much of the same notation and conventions from the previous argument.
    
    We highlight the key differences here. First,  with respect to the quadratic form $Q_n$, the set of matrices in $\nu_{n+1}$ is the set of $V \in \mathfrak{sl}_{n+1}(\mathbb{R})$ given by: 
    \begin{align*}
        V^TQ_n - Q_nV &= 0 \\
         V^TQ_n &= Q_nV \\
         Q_nV^TQ_n &= V
    \end{align*}
    This space is $\frac{(n+1)(n+2)}{2}-1$-dimensional. (Or, in other words, $\frac{n^2+3n}{2}$-dimensional.)

    The space of the $\alpha$ with respect to this quadratic form can be described as follows: 
    \[\alpha_i = \left(
    \begin{NiceArray}{c|c|c}
     A_i \ & W_i^T  \ & B_i \ \\
     \hline  X_i  \   & \phantom{\Bigg|00} E_i \phantom{00}  &  -W_i    \    \\
   \hline  C_i \ &   -X^T_i \  & -A_i \
    \end{NiceArray}
 \right) \]
 where here, $\alpha_i$ is represented as an $(n+1) \times (n+1)$-matrix; $A_i,\  B_i,\ C_i$ are $1\times1$-matrices, $X_i$ and $W_i$ are $(n-1)\times1$-matrices, and $E_i$ is a traceless matrix with antisymmetry across the antidiagonal with $(n-1)\times(n-1)$-dimension. 
    
 The conditions that $u(a_ia_j)=u(a_ja_i)$ impose on the $\alpha_i$, without loss of generality, are the conditions that $X_i$ and $C_i$ are 0, and that $E_i$ is also symmetric.  That is
 \[\alpha_i = \left(
    \begin{NiceArray}{c|c|c}
     A_i \ & W_i^T  \ & B_i \ \\
     \hline  0  \   & \phantom{\Bigg|00} E_i \phantom{00}  &  -W_i    \    \\
   \hline  0 \ &   0 \  & -A_i \
    \end{NiceArray}
 \right) \]

    Similarly, for the description of $ker(1-a_i)$ we obtain
 \[
   ker(1-a_i) = 
   \textrm{Span}\left\{ 
   \left(
    \begin{NiceArray}{c|c|c}
     0 \ & W ^T  \ & B \ \\
     \hline  0  \   & \phantom{\Bigg|00} E \phantom{00}  &  -W    \    \\
   \hline  0 \ &   0 \  & 0 \
    \end{NiceArray}
 \right)  
   \Bigg| \ E(v_i) = 0  
   \right\} 
  \]
   which mirrors the conditions on $ker(1-a_i)$ found in the previous case. 

   The induction is nearly identical---the only step we need to verify is  that 
\[ \mathrm{Span}\{\bigcap^k_j ker(1-a_j),\ ker(1-a_{k+1})\} = ker\Big((1-a_i)(1-a_{k+1})\Big)\]
for $i \in [1,k]$. The $ker\Big((1-a_i)(1-a_{k+1})\Big)$ has description
\[ \textrm{Span}\left\{ 
   \left(
    \begin{NiceArray}{c|c|c}
     0 \ & W ^T  \ & B \ \\
     \hline  0  \   & \phantom{\Bigg|00} E \phantom{00}  &  -W    \    \\
   \hline  0 \ &   0 \  & 0 \
    \end{NiceArray}
 \right)  
   \Bigg| \ E(v_i) = 0,\  E(v_{k+1})=0  
   \right\}  
    \] and so as before, $\mathrm{Span}\{\bigcap^k_j ker(1-a_j),\ ker(1-a_{k+1})\} \subseteq ker\Big((1-a_i)(1-a_{k+1})\Big)$. We rely once more on the linear independence of the $v_i$ to establish the claim. 

\end{proof}

\section{Fox Calculus Computations}
 \label{app:fox}
This section serves as a directory for the Mathematica notebooks. 

To see the computations for Theorem \ref{thm:H1H4}: 
\url{https://websites.umich.edu/~cdmonroe/mathematica/r31.nb}. 

To see the computations for Theorem \ref{thm:HPG}: 
\url{https://websites.umich.edu/~cdmonroe/mathematica/h1_into_nu4.nb}. 

To see the computations for Section \ref{subsec:sl4}: 
\url{https://websites.umich.edu/~cdmonroe/mathematica/HNN_sl4r.nb}.

\bibliographystyle{abbrvnat}
\bibliography{references}  

\end{document}